\newtheorem*{theorem*}{Theorem}
\newtheorem{theorem}{Theorem}[section]
\newtheorem{corollary}[theorem]{Corollary}
\newtheorem{conjecture}{Conjecture}[section]
\newtheorem{proposition}[theorem]{Proposition}
\newtheorem{lemma}[theorem]{Lemma}
\theoremstyle{definition}
\newtheorem{definition}{Definition}[section]
\newtheorem{remark}{Remark}[section]
\newcommand{\Alpha}{\vector{\boldsymbol \alpha}}
\newcommand{\alphap}{\alpha_p}
\newcommand{\Alphaq}{\Alpha_q}
\newcommand{\alphaq}{\alpha_q}
\newcommand{\calPb}{\vector{\pmb{\mathscr{P}}}}
\newcommand{\dd}{{\rm ~d}}
\newcommand{\ddt}{\dfrac{\rm d}{{\rm d}\,t}}
\renewcommand{\div}{{\rm ~div}}
\newcommand{\divs}{{\mbox{\rm \tiny div}}}
\newcommand{\dsp}{\displaystyle}
\newcommand{\eqdef}{:=}
\newcommand{\eg}{\textit{e.g.} }
\newcommand{\equ}{\underline{e_q}}
\newcommand{\epu}{\underline{e_p}}
\newcommand{\Forall}{ \quad \forall }
\newcommand{\grad}{ ~\vector{grad}}
\definecolor{darkgreen}{rgb}{0.55, 0.71, 0.0}
\renewcommand{\H}{\mathbf{H}}
\newcommand{\Ham}{\mathcal{H}}
\newcommand{\ie}{\textit{i.e.} }
\newcommand{\Ib}{\vector{I}}
\renewcommand{\L}{\mathbf{L}}
\newcommand{\matl}{\left(\begin{matrix}}
\newcommand{\matr}{\end{matrix}\right)}
\newcommand{\n}{\vector{n}}
\newcommand{\N}{\mathbb{N}}
\newcommand{\nol}{\left\Vert}
\newcommand{\nor}{\right\Vert}
\renewcommand{\P}{\mathbb{P}}
\newcommand{\Pb}{\vector{P}}
\newcommand{\phiq}{\vector{\boldsymbol \varphi}_q}
\newcommand{\Phiq}{\vector{\pmb{\Phi}}_q}
\newcommand{\phip}{\varphi_p}
\newcommand{\Phip}{\Phi_p}
\newcommand{\psl}{\left\langle}
\newcommand{\psr}{\right\rangle}
\newcommand{\R}{\mathbb{R}}
\newcommand{\red}[1]{\textcolor{red}{#1}}
\newcommand{\s}{\vector{s}}
\renewcommand{\span}{{\mbox{\rm Span}}}
\newcommand{\Tau}{\mathcal{T}}
\newcommand{\Tens}{\overline{\overline{\boldsymbol T}}}
\renewcommand{\v}{\vector{v}}
\newcommand{\V}{\vector{V}}
\newcommand{\val}{\left\vert}
\newcommand{\var}{\right\vert}
\renewcommand{\vector}[1]{\overrightarrow{\bf #1}}
\newcommand{\via}{\textit{via} }
\newcommand{\x}{\vector{x}}
\newcommand{\y}{\vector{y}}
\newcommand{\z}{\vector{z}}
\title{\textbf{Numerical analysis of a structure-preserving space-discretization for an anisotropic and heterogeneous boundary controlled \texorpdfstring{$N$}{N}-dimensional wave equation as port-Hamiltonian system}}
\author{\textsc{Haine}, Ghislain \and \textsc{Matignon}, Denis \and \textsc{Serhani}, Anass}
\date{}
\begin{document}

\maketitle

\begin{abstract}
{The anisotropic and heterogeneous \texorpdfstring{$N$}{N}-dimensional wave equation, controlled and observed at the boundary, is considered as a port-Hamiltonian system. A recent structure-preserving mixed Galerkin method is applied, leading directly to a finite-dimensional port-Hamiltonian system: its numerical analysis is carried out in a general framework. Compatibility conditions are then exhibited to reach the best trade-off between the convergence rate and the number of degrees of freedom for both the state error and the Hamiltonian error. In particular, the order of boundary approximations is discussed, keeping the port-Hamiltonian formalism in mind. Numerical simulations in 2D are performed to illustrate the optimality of the main theorems among several choices of classical finite element families.}

\textbf{Keywords:} {port-Hamiltonian systems; $N$-dimensional wave equation; finite element method; structure-preserving discretization; numerical analysis}

\textbf{MSC (2020): 65M60; 35L50; 93C20}
\end{abstract}

\section{Introduction}

The present work addresses the numerical analysis of a structure-preserving space-discretization of an $N$-dimensional wave equation with boundary control in the formalism of port-Hamiltonian systems. Since it is intended to merge several points of view on the same subject, the authors have taken care to be pedagogical in each section, hence trying to talk to several scientific communities. This choice of presentation will certainly lead readers to find some parts related to his/her domain(s) of research unnecessary. Roughly speaking, this paper is intended to port-Hamiltonian specialists, numerical analysts, and scientific computing users.

\subsection{Port-Hamiltonian systems}

In the last two decades, infinite-dimensional port-Hamiltonian systems (pHs)~\cite{SchMas02,Rashad2020} have proved to be a very accurate way to model and control complex multi-physics open systems. This framework enjoys several advantages, such as a relevant physical meaning and a useful underlying geometrical structure (namely Stokes-Dirac structure). It has to be pointed out that, even if known Partial Differential Equations (PDEs) are often \emph{only rewritten} in the pHs formalism in general, this powerful tool also allows a direct modelling of physical systems (see for instance~\cite{DuiMacStrBru09,SerMatHai19b,altmann2020porthamiltonian}) which proves useful to derive PDEs. Furthermore, it is intrinsically modular: interaction systems (such as fluid-structure interactions \cite{CarMatPom20}, heat-wave interactions~\cite{HaiMatMon2022}, plasma in a tokamak \cite{VuLefMas16}, etc.) can be described through the interconnection of several subsystems with a port-Hamiltonian structure, leading to a more complex pHs \cite{CerSchBan07,Kurula2010}. It finally leads to a power balance, expressing the variation of the Hamiltonian functional (often chosen as the system total energy), especially \via boundary controls and boundary observations.

\subsection{Structure-preserving discretisation}

A recent topic of research is to provide accurate (space-)~discretization methods to preserve this powerful formalism. Roughly speaking, mainly two non-exclusive communities work on the issue of {\em structure-preserving discretization}. The first one makes use of exterior calculus, while the other makes use of vector calculus. It is known that the two points of view are well-founded, and several strategies to merge their advantages efficiently have already been proposed for several discretization issues (see \eg \cite{KotHDR} and the many references therein).

In the present work, a method for the preservation of the power balance of the Hamiltonian (encoded in an underlying Stokes-Dirac structure) is studied. In the wide literature, several strategies have been proposed: we can cite \eg \cite{Hiemstra2014,SesSchSch14,KotMasLef18} for geometric discretizations,~\cite{Lee2018b,Egger2019} for Galerkin methods and~\cite{TreRamGorKot18} for finite differences method. However, some of these strategies seem difficult to carry over to $N$-dimensional systems or to apply to complex geometries, while others require post-processing to construct the finite-dimensional Dirac structure. Another structure-preserving community works on the preservation of the de Rham cohomology and related decompositions (such as the Hodge-Helmholtz decomposition). This topic is older and finds its origins in problems such as electromagnetism (see \eg \cite{Mon03} and references therein). It is often written in the exterior calculus formalism, allowing for more abstraction, hence more generality, for the construction of discrete differential operators: see \eg \cite{Hiptmair2001b,BocHym06,Arnold2006,ArnFalWin10,Gerritsma2016} for theoretical aspects, and~\cite{FarKliJocFloDyc13,FarBalDyc14,Gerritsma2018} for some applications to partial differential equations.

According to these definitions of structure-preserving discretization, a numerical method for port-Ha\-mil\-to\-nian systems should be able to take into account the aforementioned continuous properties at the discrete level. Indeed, this would lead to a relevant physical meaning for the computed quantities (without post-processing), together with an obvious manner to distribute the computations thanks to the modularity property: in particular, each sub-system could be reduced through a structure-preserving model reduction~\cite{GugPolBeaSch12,EggKugLilMarMeh18,HaiMat21} \emph{prior to} their interconnections. Furthermore, in the field of automatic control, several methodologies for efficient control or stabilisation rely on the pHs form of the approximate finite-dimensional system~\cite{Toledo2020}: this encourages research for efficient structure-preserving methods of infinite-dimensional pHs, and even more  those related to boundary-controlled-and-observed PDEs.

A special case of the mixed Galerkin method, called the Partitioned Finite Element Method (PFEM)~\cite{CarMatLef19}, seems to be one of the most adapted scheme to build a mimetic finite-dimensional Dirac structure~\cite{Rashad2020}.

In~\cite{Jol03}, the numerical method proposed for the spatial discretization of \emph{closed} hyperbolic systems, based on the \emph{primal-dual} or \emph{dual-primal formulations} given in \cite[Eqs.~(15) and~(16)]{Jol03}, and making use of an abstract mixed Galerkin method, can be seen as the starting point of the PFEM for \emph{closed} systems. Indeed, the idea of \emph{partitioning} the system to choose on which equation an integration by parts  should be applied was already mentioned: ``the principle is to multiply the two equations [...] by test functions and to integrate over $\Omega$, but the key point this time is to apply integration by parts only for one of the two equations.'', see \cite[p.~207]{Jol03}. The new difficulty for port-Hamiltonian systems lies in the boundary terms, namely the control and the observation. The present work investigates the issue of accurate approximations at the boundary.

Indeed, the numerical analysis of boundary controlled wave-like systems discretized \via the Mixed Finite Element Method often makes use of  known results on elliptic systems. This is an easy way to obtain convergence rates, but it definitely strengthens the conditions on the finite element families that can be used, for instance by introducing an artificial need of so-called \emph{inf--sup condition}. In the case of Dirichlet boundary control, this makes the numerical schemes artificially complicated, requiring the discretization of a lifting operator (\ie solving an elliptic system at each time step) or the addition of Lagrange multipliers. To the best of our knowledge, numerical analysis without these difficulties has only been performed on particular choices of finite elements for the former case, see \eg \cite[Remark~6]{Jol03} and for instance~\cite{BecJolTso00,BecJolTso02}, where new families of mixed finite elements are constructed on purpose. The present work extends this result to {\em open} dynamical systems for all conforming finite elements and both Neumann and Dirichlet boundary controls, without such an inf--sup condition, neither the discretization of a lifting operator, nor the use of Lagrange multipliers. The less restrictive conforming conditions (for closed systems) have already been stated in~\cite[Eq.~(31)]{Jol03}, and claimed to be required in~\cite[Section~7.9]{BofBreFor13} for efficient convergence (with usual finite elements); in~\cite{LepMorRod14} a similar result has already been obtained for the Timoshenko beam in 1-D. Furthermore, it is shown that adding compatibility conditions between the finite element families in order to preserve de Rham cohomology results in a better convergence rate for the discrete Hamiltonian towards the continuous one, see Theorem~\ref{th:General-Hamiltonian}.

\subsection{Statement of the main results}

The objective of this section is to provide an informal statement of the main result. In this work, the aim is to analyse the convergence of the PFEM, applied on the following system, associated to the $N$-dimensional anisotropic and heterogeneous wave equation
\begin{equation}\label{eq:waves-continuous}
\left\{\begin{array}{ll}
\rho(\x) \; \partial^2_t w(t,\x) - \div \left( \Tens(\x) \; \grad (w(t,\x)) \right) = 0,& \Forall \x \in \Omega, t \ge 0,\\
w(0,\x) = w_0(\x), \qquad \partial_t w(0,\x) = w_1(\x),& \Forall \x \in \Omega,
\end{array}\right.
\end{equation}
together with the following collocated boundary control $u$ and boundary observation $y$
\begin{equation}\label{eq:waves-boundary}
\left\{\begin{array}{ll}
u(t,\x) = \left( \Tens(\x) \; \grad (w(t,\x)) \right)^\top \; \n(\x),& \Forall \x \in \partial\Omega, t \ge 0,\\
y(t,\x) = \partial_t w(t,\x),& \Forall \x \in \partial\Omega, t \ge 0.
\end{array}\right.
\end{equation}
In these equations
\begin{itemize}
\item
$\Omega$ is an open bounded domain of $\R^N$, $N = 1, 2, 3$, with Lipschitz boundary $\partial\Omega$;
\item
$\n$ is the outward normal at the boundary $\partial\Omega$;
\item
$w(t,\x)$ is the deflection from the equilibrium position at point $\x \in \Omega$ and time $t \ge 0$;
\item
$u$ is the boundary control corresponding to forces applied at the boundary;
\item
$y$ is the collocated boundary observation corresponding to the measured velocities on $\partial\Omega$;
\item
$\rho$ is the mass density, supposed to be bounded from above and below (almost everywhere) by $\rho^+$ and $\rho_->0$ respectively;
\item
$\Tens$ is Young's elasticity modulus, supposed to be a real symmetric tensor bounded from above and below (almost everywhere, in the matrix-norm sense) by $T^+ \overline{\overline{\boldsymbol I}}$ and $T_- \overline{\overline{\boldsymbol I}}$ respectively, where $T_->0$ and $\overline{\overline{\boldsymbol I}}$ is the identity tensor;
\item
$\top$ stands for the transpose of vectors or matrices.
\end{itemize}
We associate to system~\eqref{eq:waves-continuous}--\eqref{eq:waves-boundary} the \textit{Hamiltonian}
$$
\Ham(t) \eqdef \dfrac{1}{2} \int_\Omega \left[ \left( \grad \left(w(t,\x)\right) \right)^\top \; \Tens(\x) \; \grad \left(w(t,\x)\right) + \rho(\x) \; \left(\partial_t w(t,\x)\right)^2\right] \dd \x,
$$
made of the potential and kinetic energies of the physical system.

The following statement is an abridged and formal version of Theorems~\ref{th:General-State} and~\ref{th:General-Hamiltonian}.
\begin{theorem*}
Let us denote the \emph{strain} $\Alphaq \eqdef \grad (w)$, the \emph{linear momentum} $\alphap \eqdef \rho \partial_t w$, and their discrete counterparts $\Alphaq^d$ and $\alphap^d$ obtained by the Partitioned Finite Element Method. The Hamiltonian then rewrites
\begin{equation}\label{eq:Hamiltonian}
\Ham(t) \eqdef \Ham(\Alphaq(t),\alphap(t)) \eqdef \dfrac{1}{2} \int_\Omega \left[ \left( \Alphaq(t,\x) \right)^\top \; \Tens(\x) \; \Alphaq(t,\x) + \frac{\alphap(t,\x)^2}{\rho(\x)} \right] \dd \x.
\end{equation}
Let us define the discrete Hamiltonian $\Ham^d(t) \eqdef \Ham(\Alphaq^d(t),\alphap^d(t))$ and the two errors:
\begin{itemize}
\item $\mathcal E^{\mathcal X}(t) \eqdef \nol \matl \Alphaq(t) \\ \alphap(t) \matr - \matl \Alphaq^d(t) \\ \alphap^d(t) \matr \nor_{\mathcal X}$ the absolute error in a suitable energy space $\mathcal X$;
\item
$\mathcal E^\Ham(t) \eqdef \Ham(t) - \Ham^d(t)$ the error between the continuous and the discrete Hamiltonians.
\end{itemize}
Under suitable assumptions (regularity, conformity and order given by a parameter $\kappa$) on the three finite element families (for $\Alphaq$ in $\Omega$, $\alphap$ in $\Omega$, and $(u, y)$ on $\partial\Omega$), for all $T>0$, all initial data smooth enough and all $u$ smooth enough, there exist $C>0$, independent of $h$ the mesh size parameter, and $h^*>0$ such that
$$
\mathcal E^{\mathcal X}(t) \le C ~ h^{\kappa}, \Forall t \in [0,T], \; h \in (0, h^*).
$$
Furthermore, under \emph{compatibility} assumptions between the three finite element families, the Hamiltonian error satisfies
$$
\mathcal E^{\Ham}(t) - \mathcal E^{\Ham}(0) = \frac{1}{2} \left( \left( \mathcal E^{\mathcal X}(t) \right)^2 - \left( \mathcal E^{\mathcal X}(0) \right)^2 \right), \Forall t \in [0,T],
$$
giving a convergence rate of order $2\kappa$.
\end{theorem*}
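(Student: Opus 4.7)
\emph{State error.} The plan is to cast both the continuous and semi-discrete systems in a common weak mixed form: one tests $\Alphaq$ against $\phiq$ and $\alphap$ against $\phip$ in $\Omega$, and the boundary port against a dedicated boundary test function $\psi^\partial$; as in the PFEM, integration by parts is applied only to one of the two dynamical equations, so that the control $u$ enters as a \emph{natural} datum. I introduce conforming projections $\Pi_q,\Pi_p,\Pi_\partial$ onto the three finite element families, for which the regularity/conformity/order $\kappa$ assumption yields the approximation bound
\begin{equation*}
\| \Alphaq - \Pi_q \Alphaq \| + \| \alphap - \Pi_p \alphap \| + \| u - \Pi_\partial u \| \le C\, h^{\kappa}
\end{equation*}
in the appropriate norms, for data smooth enough. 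Splitting the error as
\begin{equation*}
\matl \Alphaq - \Alphaq^d \\ \alphap - \alphap^d \matr
= \matl \Alphaq - \Pi_q \Alphaq \\ \alphap - \Pi_p \alphap \matr
+ \matl \Pi_q \Alphaq - \Alphaq^d \\ \Pi_p \alphap - \alphap^d \matr,
\end{equation*}
and subtracting the continuous and discrete weak formulations produces an evolution equation for the second, purely discrete, summand whose right-hand side is a consistency residual depending only on the projection errors above.

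\emph{Energy estimate.} Testing this error equation against the discrete part of the error itself, the skew-symmetric (Stokes--Dirac) block cancels exactly, mimicking the continuous power balance, and leaves only a linear-in-error forcing whose norm is $O(h^\kappa)$ uniformly in time. A direct integration (no dissipation is present) or Gronwall's lemma then yields $\mathcal{E}^{\mathcal{X}}(t) \le C h^\kappa$ on $[0,T]$. The main obstacle here is the boundary contribution: because $u$ is a natural datum after the partitioned integration by parts and is tested against its own dedicated finite element family on $\partial\Omega$, the boundary residual is controlled by $\| u - \Pi_\partial u \|$ without any inf--sup assumption, discrete lifting, or Lagrange multipliers.

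\emph{Hamiltonian error.} As $\Ham$ is quadratic, polarisation with respect to the energy inner product $\langle\cdot,\cdot\rangle_{\mathcal{X}}$ associated with $\mathcal{E}^{\mathcal{X}}$ gives
\begin{equation*}
\mathcal{E}^{\Ham}(t) = \tfrac{1}{2}\, \bigl(\mathcal{E}^{\mathcal{X}}(t)\bigr)^2
+ \Bigl\langle \bigl(\Alphaq^d,\alphap^d\bigr)(t),\, \bigl(\Alphaq-\Alphaq^d,\alphap-\alphap^d\bigr)(t) \Bigr\rangle_{\mathcal{X}},
\end{equation*}
so that the stated identity is equivalent to the time-invariance of the cross term on the right. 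Differentiating it in $t$ and substituting the continuous and discrete evolution equations, the \emph{compatibility} (commuting-diagram / discrete de Rham) assumption on the three finite element families ensures that $\grad$ maps the discrete momentum space into the discrete strain space and the discrete trace into the boundary space, which promotes the usual test-function Galerkin orthogonality into orthogonality in the \emph{energy} inner product; all boundary terms then cancel pairwise and the derivative vanishes. Evaluating at $t$ and $0$ and subtracting gives the identity, and hence, combined with the first part, the doubled rate $O(h^{2\kappa})$: this is the port-Hamiltonian counterpart of the Aubin--Nitsche duality trick, and the delicate step is precisely to verify that the commuting-diagram property survives in the presence of the boundary port.
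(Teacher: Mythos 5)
Your proposal is correct and follows essentially the same route as the paper: the state error is split into a projection part and a purely discrete part, the discrete part is estimated by an energy argument in which the skew-symmetric structure cancels and only consistency terms driven by the projection and boundary-control errors survive (with the control entering as a natural datum, hence no inf--sup condition), and the Hamiltonian identity is obtained by polarisation in the $\mathcal X$-inner product together with the time-invariance of the cross term under the compatibility assumptions. The only step your sketch leaves implicit is the inverse inequality between the $H^1$- and $L^2$-norms on the discrete space (hypotheses (H4)--(H5) in the paper), which is needed because the surviving consistency terms involve gradients of discrete and projected quantities: this costs a factor $h^{-\theta_{1,0}}$ that is recovered from the extra order of the $L^2$-projections, and it is exactly what limits the final rate to $\kappa$ rather than $\kappa+1$.
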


\subsection{Organization of the paper}

The paper is organized as follows: in Section~\ref{Sec:Wave}, the well-posedness of the physical system~\eqref{eq:waves-continuous}--\eqref{eq:waves-boundary} is recalled, and some assumptions on the regularity of the solutions are made. In Section~\ref{Sec:PFEM}, the PFEM is applied and discussed, and the resulting finite-dimensional Dirac structure is highlighted. The case of Dirichlet boundary control, \ie \emph{switching} control and observation, is addressed in Section~\ref{Sec:switched-system}. In Section~\ref{Sec:AnaNum}, the main convergence results are proved for a general Galerkin approximation method, namely Theorem~\ref{th:General-State} for the state error,  and  Theorem~\ref{th:General-Hamiltonian} for the Hamiltonian error. In Section~\ref{Sec:AnaNum-PkRTl}, accurate combinations of finite elements are proposed for a given rate of convergence, by minimizing the number of degrees of freedom. In Section~\ref{Sec:SimuNum}, 2D simulations are provided to exhibit the proven convergence rates and its optimality (\ie maximizing the convergence rate with the minimal number of degrees of freedom); several test cases are provided to illustrate the flexibility of the method. Finally, Section~\ref{Sec:Conclusion} concludes this work with a summary of the results and draws some  perspectives.

\section{The \texorpdfstring{$N$}{N}-dimensional wave equation as a pHs}\label{Sec:Wave}

In this section, the boundary-controlled-and-observed wave system~\eqref{eq:waves-continuous}--\eqref{eq:waves-boundary} is firstly recast as a port-Hamiltonian system. This system has already been studied in the pHs framework in~\cite{KurZwa15}, in a more general context, \ie with several boundary conditions on a partition of $\partial\Omega$ and internal fluid damping. Secondly, well-posedness is recalled~\cite{KurZwa15} and a refined regularity result is conjectured, assuming a higher regularity of the physical parameters, the initial data, and the control.

Although it should be possible to prove the regularity assumptions making use of Boundary Control Systems theory~\cite[Chapter~10.]{TucWei09} or by adapting the results in~\cite{KurZwa15} to the uniform boundary control case considered here, it goes beyond the scope of this work.

\subsection{The distributed-parameters port-Hamiltonian system}\label{Sec:Wave-as-pHs}

From now on, $H^\kappa(\Omega)$ denotes the usual Sobolev space for $\kappa \in \R$, and $H^0(\Omega)$ is identified with $L^2(\Omega)$ (the same notations are used on the boundary $\partial\Omega$). We also write $\L^2(\Omega) \eqdef (L^2(\Omega))^N$ and $\H^\kappa(\Omega) \eqdef (H^\kappa(\Omega))^N$.

\begin{definition}[Traces {\cite[Chapter~2]{Ces96}}]\label{def:traces}
The linear trace operators are defined as follows
\begin{itemize}
\item the Dirichlet trace operator $\gamma_0$, defined by $\gamma_0 (v) \eqdef v_{|\partial\Omega}$ for $v \in \mathcal C^\infty(\overline\Omega)$, extends continuously from $H^1(\Omega)$ onto $H^{\frac{1}{2}} (\partial \Omega)$;
\item the normal trace operator $\gamma_\perp$, defined by $\gamma_\perp (\v) \eqdef (\v \; \n)_{|\partial\Omega}$ on $(\mathcal C^\infty(\overline\Omega))^N$, extends continuously from $\H(\div ; \Omega) \eqdef \left\{ \v \in \L^2(\Omega) \; \mid \; \div\left(\v\right) \in L^2(\Omega) \right\}$ onto $H^{-\frac{1}{2}}(\partial\Omega)$.
\end{itemize}
\end{definition}

The so-called Green's formula then reads: for all $\v \in \H(\div ; \Omega)$, $v \in H^1(\Omega)$,
\begin{equation}\label{eq:Green}
\int_\Omega v(\x) \div(\v(\x)) \dd \x = - \int_\Omega \left(\grad (v(\x))\right)^\top \; \v(\x) \dd \x 
+ \psl \gamma_\perp (\v), \gamma_0 (v) \psr_{H^{-\frac{1}{2}}(\partial\Omega), H^{\frac{1}{2}}(\partial\Omega)}.
\end{equation}
The last term in~\eqref{eq:Green} is the duality bracket between $H^{\frac{1}{2}}(\partial\Omega),$ and $H^{-\frac{1}{2}}(\partial\Omega)$. Note that as soon as $\gamma_\perp (\v) \in L^2(\partial\Omega)$, this bracket reduces to the usual $L^2(\partial\Omega)$-inner product~\cite[§~2.9]{TucWei09}.

Let us define the \emph{strain} $\Alphaq \eqdef \grad (w)$ and the \emph{linear momentum} $\alphap \eqdef \rho \; \partial_t w$. Then one can rewrite the first line of System~\eqref{eq:waves-continuous} as
\begin{equation}\label{eq:Hamilton-System}
\matl \partial_t \Alphaq \\ \partial_t \alphap \matr = \underbrace{\matl 0 & \grad \\ \div & 0 \matr}_{=: \mathcal J} \underbrace{\matl \Tens & 0 \\ 0 & \rho^{-1} \matr}_{=: \mathcal Q} \matl \Alphaq \\ \alphap \matr.
\end{equation}

The boundary control and observation~\eqref{eq:waves-boundary} then read
\begin{equation}\label{eq:Hamilton-System-Boundary}
u = \gamma_\perp \left( \Tens \; \Alphaq \right), \qquad y = \gamma_0 \left(\rho^{-1} \alphap \right).
\end{equation}

We already define in~\eqref{eq:Hamiltonian} the Hamiltonian of~\eqref{eq:Hamilton-System}--\eqref{eq:Hamilton-System-Boundary}
$$
\Ham(t) \eqdef \Ham(\Alphaq(t),\alphap(t)) \eqdef \dfrac{1}{2} \int_\Omega \left[ \left( \Alphaq(t,\x) \right)^\top \; \Tens(\x) \; \Alphaq(t,\x) + \frac{\alphap(t,\x)^2}{\rho(\x)} \right] \dd \x,
$$
corresponding to the sum of the potential and kinetic energy, \ie the total mechanical energy of the system. Making use of Green's formula~\eqref{eq:Green} together with~\eqref{eq:Hamilton-System}--\eqref{eq:Hamilton-System-Boundary}, one gets that for all $t\ge0$
\begin{equation}\label{eq:variation-Hamiltonian}
\ddt \Ham(t) = \psl u(t), y(t) \psr_{H^{-\frac{1}{2}}(\partial\Omega), H^{\frac{1}{2}}(\partial\Omega)},
\end{equation}
meaning that the variation of energy is the power supplied to the system at the boundary~\cite{SchMas02}.

\begin{remark}
In the Hamiltonian formalism, $\Alpha$ are called the \emph{energy variables} while $\vector{e} := \delta_{\Alpha} \Ham(\Alpha)$, the variational derivative of $\Ham$ with respect to $\Alpha$~\cite{SchMas02} are the \emph{co-energy variables}. The relations between $\vector{e}$ and $\Alpha$, linear in the present case, are known as the \emph{constitutive relations}, which enable to close the system of equations.
\end{remark}

\begin{remark}
The \emph{co-energy variables} $\vector{e}_q \eqdef \delta_{\Alphaq} \Ham$ and $e_p \eqdef \delta_{\alphap} \Ham$ are physically meaningful: $\vector{e}_q = \Tens \; \Alphaq = \Tens \; \grad \left( w \right)$ is the \emph{stress}, and $e_p = \rho^{-1} \alphap = \partial_t w$ is the \emph{deflection velocity}. Furthermore, as seen in~\eqref{eq:variation-Hamiltonian}, a relevant way to control and observe the system through the boundary is related to both the traces of these co-energy variables.
\end{remark}

\subsection{Existence and uniqueness of solutions}
\label{Sec:existence-uniqueness}

Let $\mathcal X \eqdef \L^2(\Omega) \times L^2(\Omega)$ be the energy space, endowed with the inner product
$$
\begin{array}{ll}
\dsp \psl \z_1, \z_2 \psr_{\mathcal X} 
&\eqdef\dsp \psl \mathcal Q \matl {\Alphaq}_1 \\ {\alphap}_1 \matr, \matl {\Alphaq}_2 \\ {\alphap}_2 \matr \psr_{\L^2(\Omega) \times L^2(\Omega)} \\
&=\dsp \int_\Omega \left( \left(\Tens(\x) \; {\Alphaq}_1(\x)\right)^\top \; {\Alphaq}_2(\x) + \left( \rho(\x)^{-1} {\alphap}_1(\x) \right) {\alphap}_2(\x) \right) \dd \x,
\end{array}
$$
for all $(\z_1, \z_2) \eqdef \left( \matl {\Alphaq}_1 \\ {\alphap}_1 \matr, \matl {\Alphaq}_2 \\ {\alphap}_2 \matr \right) \in \mathcal X^2$. It is clear from the assumption on $\rho$ and $\Tens$ that the norm inherited from this inner product is equivalent to the usual $\L^2(\Omega) \times L^2(\Omega)$-norm.

In addition, let $\mathcal Z \eqdef \mathcal Q^{-1} \left[ \begin{matrix} \H(\div ; \Omega) \\ H^1(\Omega) \end{matrix}\right]$ be the solution space, $\mathcal U \eqdef H^{-\frac{1}{2}}(\partial\Omega)$ the control space and $\mathcal Y \eqdef \mathcal U' = H^{\frac{1}{2}}(\partial\Omega)$ the observation space. It has been shown in~\cite[Corollary~4.3]{KurZwa15} that this leads to an internally well-posed strong impedance conservative boundary control system on $(\mathcal U,\mathcal X,\mathcal Y)$.
\begin{theorem}[Corollary~4.3 in \cite{KurZwa15}]\

For all $u \in \mathcal C^2([0,\infty);\mathcal U)$, $\z_0 \eqdef \matl {\Alphaq}_0 \\ {\alphap}_0 \matr \eqdef \matl \grad (w_0) \\ \rho^{-1} w_1 \matr \in \mathcal Z$ such that $u(0) = \gamma_\perp \left( \Tens \; \grad (w_0) \right)$, there exists a unique solution to~\eqref{eq:Hamilton-System}--\eqref{eq:Hamilton-System-Boundary} with
$$
\z = \matl \Alphaq \\ \alphap \matr \in \mathcal C^1([0,\infty); \mathcal X) \cap \mathcal C([0,\infty); \mathcal Z), \qquad y \in \mathcal C([0,\infty); \mathcal Y).
$$
\end{theorem}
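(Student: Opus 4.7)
The plan is to recognize this as an existence/uniqueness result for a boundary control system (BCS) in the sense of Salamon--Malinen--Staffans, so I would deploy the standard boundary control framework for impedance conservative systems, as Kurula and Zwart do in the cited reference. First, I would introduce the operator $\mathcal{A} := \mathcal{J}\mathcal{Q}$ acting on its natural domain $\mathcal{Z} = \mathcal{Q}^{-1}[\H(\div;\Omega) \times H^1(\Omega)]$, together with the boundary input map $\mathcal{B} \z := \gamma_\perp(\Tens \Alphaq)$ and the boundary output map $\mathcal{C}\z := \gamma_0(\rho^{-1}\alphap)$. The Green formula~\eqref{eq:Green}, applied to $(\Tens \Alphaq, \rho^{-1}\alphap)$, yields the abstract power balance $\langle \mathcal{A}\z, \z\rangle_{\mathcal{X}} + \langle \z, \mathcal{A}\z\rangle_{\mathcal{X}} = \langle \mathcal{B}\z, \mathcal{C}\z\rangle_{\mathcal{U},\mathcal{Y}}$, which is precisely the impedance-conservative structure condition.

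Second, I would analyse the homogeneous operator $\mathcal{A}_0 := \mathcal{A}|_{\ker \mathcal{B}}$, \ie with homogeneous Neumann data $\gamma_\perp(\Tens\Alphaq)=0$. The key step is to show that $\mathcal{A}_0$ is skew-adjoint on $\mathcal X$: skew-symmetry is immediate from Green's formula, while showing $\mathcal{D}(\mathcal{A}_0^*) \subset \mathcal{D}(\mathcal{A}_0)$ requires a standard elliptic regularity / range argument, namely that $(I - \mathcal{A}_0)$ is surjective, which reduces to solving a coercive elliptic problem $-\div(\Tens \grad w) + \rho w = f$ with homogeneous Neumann data by Lax--Milgram, using the uniform upper/lower bounds on $\rho$ and $\Tens$. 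By Stone's theorem, $\mathcal{A}_0$ then generates a unitary $C_0$-group on $\mathcal{X}$, yielding well-posedness of the problem with $u \equiv 0$.

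Third, one has to accommodate the non-zero input $u \in \mathcal{C}^2([0,\infty);\mathcal{U})$. The standard route is to construct a lifting: for each $t$, find $\z_\ell(t) \in \mathcal{Z}$ with $\mathcal{B}\z_\ell(t) = u(t)$ and $\z_\ell$ depending $\mathcal{C}^2$ on $t$; surjectivity of $\gamma_\perp$ from $\H(\div;\Omega)$ onto $H^{-1/2}(\partial\Omega)$ (\eg via a Neumann harmonic extension) provides this. Then $\tilde{\z} := \z - \z_\ell$ satisfies an inhomogeneous Cauchy problem $\partial_t \tilde{\z} = \mathcal{A}_0 \tilde{\z} + (\mathcal{A}\z_\ell - \partial_t \z_\ell)$ with $\mathcal{B}\tilde{\z}=0$ and initial data $\tilde{\z}_0 = \z_0 - \z_\ell(0)$; the compatibility $u(0) = \mathcal{B}\z_0$ ensures $\tilde{\z}_0 \in \mathcal{D}(\mathcal{A}_0)$. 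The $\mathcal{C}^1$ regularity of the forcing (inherited from the $\mathcal{C}^2$ regularity of $u$) combined with semigroup theory (Pazy) gives a classical solution $\tilde{\z} \in \mathcal{C}^1([0,\infty);\mathcal{X}) \cap \mathcal{C}([0,\infty);\mathcal{D}(\mathcal{A}_0))$, hence the stated regularity of $\z$. Continuity of the trace operator $\gamma_0$ from $H^1(\Omega)$ to $H^{1/2}(\partial\Omega)$ yields $y \in \mathcal{C}([0,\infty);\mathcal{Y})$.

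The main obstacle, in my view, is the skew-adjointness of $\mathcal{A}_0$: skew-symmetry follows painlessly from Green's formula, but identifying $\mathcal{D}(\mathcal{A}_0^*)$ (equivalently, proving surjectivity of $I - \mathcal{A}_0$) requires care with the anisotropic/heterogeneous Neumann elliptic problem and with the precise definition of $\H(\div;\Omega)$ on a merely Lipschitz boundary. Once this is in place, the rest is a routine application of the abstract boundary control machinery of \cite{TucWei09, KurZwa15}, and uniqueness follows from linearity together with the contractive semigroup estimate applied to the difference of two solutions with $u=0$ and vanishing initial data.
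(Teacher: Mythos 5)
The paper does not prove this statement: it is imported verbatim as Corollary~4.3 of the cited reference [KurZwa15], so there is no in-paper argument to compare against. Your sketch is a correct and standard reconstruction of the boundary-control-system proof that underlies that reference: skew-symmetry of $\mathcal J\mathcal Q$ on $\ker\gamma_\perp(\Tens\,\cdot)$ via Green's formula, skew-adjointness via the range condition for $I\pm\mathcal A_0$ (which, as you say, reduces by Lax--Milgram to the coercive Neumann problem $\rho e_p-\div(\Tens\grad e_p)=f_p+\div(\Tens\vector{f}_q)$ in weak form, coercivity coming from the uniform bounds on $\rho$ and $\Tens$), Stone's theorem, and then a time-independent bounded right inverse of $\gamma_\perp:\H(\div;\Omega)\to H^{-1/2}(\partial\Omega)$ to lift $u$ and apply the classical inhomogeneous semigroup theorem, the compatibility condition placing the shifted initial datum in $\mathcal D(\mathcal A_0)$. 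Two cosmetic remarks: the impedance-conservation identity should carry a factor $2$ on the right-hand side, $\psl\mathcal A\z,\z\psr_{\mathcal X}+\psl\z,\mathcal A\z\psr_{\mathcal X}=2\psl\mathcal B\z,\mathcal C\z\psr_{\mathcal U,\mathcal Y}$; and Kurula--Zwart actually work in a more abstract system-node setting with a partitioned boundary and internal damping, so their proof is organized differently, but your direct argument covers the uniform-Neumann-control case stated here.
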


\begin{remark}
The compatibility condition $u(0) = \gamma_\perp \left( \Tens \; \grad (w_0) \right)$ is well-known in boundary control systems theory. See for instance \cite[Chapter~10]{TucWei09} for more details.
\end{remark}

With this material at hand, we are now able to define properly what is meant by {\em formally skew-symmetric}: the operator $\mathcal J \mathcal Q$ restricted to $\H_0(\div ; \Omega) \times H^1(\Omega)$, where $\H_0(\div ; \Omega) \eqdef \left\{ \v \in \H(\div ; \Omega) \; \mid \; \gamma_\perp \left( \v \right) = 0 \right\}$, is skew-adjoint on $\mathcal X$. This follows from Green's formula~\eqref{eq:Green}.

\vspace{5mm}
As is often the case in numerical analysis, sufficient regularity of the solution of the continuous problem is required to be allowed to use the interpolation error inequalities and prove convergence. The following development is formal, and its usefulness for the rest of this work will be enlightened in Remark~\ref{rem:utility-regularity}.

For all integer $\kappa\ge0$, assume that $\partial\Omega$ is $\mathcal C^{\kappa+2}$ and define
\begin{itemize}
\item
$\H^0(\div ; \Omega) \eqdef \L^2(\Omega)$ and
$\H^\kappa(\div ; \Omega) \eqdef \left\{ \v \in \H^{\kappa-1}(\Omega) \; \mid \; \div\left(\v\right) \in H^{\kappa-1}(\Omega) \right\}$ when $\kappa \ge 1$, endowed with the inner product
$$
\psl \v_1, \v_2 \psr_{\H^\kappa(\div ; \Omega)} \eqdef \psl \v_1, \v_2 \psr_{\H^{\kappa-1}(\Omega)} + \psl \div (\v_1), \div (\v_2) \psr_{H^{\kappa-1}(\Omega)};
$$
\item
$\mathcal X_\kappa \eqdef \H^\kappa(\div ; \Omega) \times H^\kappa(\Omega)$ endowed with the bilinear form, for all $\z_i := \matl \v_i \\ v_i \matr \in \mathcal X_\kappa$, $i = 1, 2$
$$
\psl \z_1, \z_2 \psr_{\mathcal X_\kappa} \eqdef \psl \mathcal Q \z_1, \z_2 \psr_{\H^\kappa(\div ; \Omega) \times H^\kappa(\Omega)} := \psl \mathcal \Tens \; \v_1, \v_2 \psr_{\H^\kappa(\div ; \Omega)} + \psl \mathcal \rho^{-1} \; v_1, v_2 \psr_{H^\kappa(\Omega)},
$$
the energy space;
\item
$\mathcal Z_\kappa \eqdef \mathcal Q^{-1} \left[ \begin{matrix} \H^{\kappa+1}(\div ; \Omega) \\ H^{\kappa+1}(\Omega) \end{matrix} \right]$ the solution space;
\item
$\mathcal U_\kappa \eqdef H^{\kappa-\frac{1}{2}}(\partial\Omega)$ the control space;
\item
$\mathcal Y_\kappa \eqdef H^{\kappa+\frac{1}{2}}(\partial\Omega)$ the observation space.
\end{itemize}

It is known from~\cite[Chapter~2; Theorem~1 \& Proposition~10]{Ces96} that the traces of Definition~\ref{def:traces} satisfy
\begin{itemize}
\item $\gamma_0$ is continuous from $H^{\kappa+1}(\Omega)$ onto $H^{\kappa+\frac{1}{2}} (\partial \Omega)$;
\item $\gamma_\perp$ is continuous from $\H^{\kappa+1}(\div ; \Omega)$ onto $H^{\kappa-\frac{1}{2}}(\partial\Omega)$.
\end{itemize}

Assume furthermore that $\rho$ and $\Tens$ are smooth enough for $\mathcal X_\kappa$ to be  a Hilbert space.

\begin{conjecture}\label{conj:regularity}
With the above notations and assumptions, it holds
\begin{multline}\label{H0}\tag{H0}
\forall \; \z_0 \eqdef \matl \grad (w_0) \\ \rho^{-1} w_1 \matr \in \mathcal Z_\kappa, \; \forall \; u \in \mathcal C^2([0,\infty);\mathcal U_\kappa) \; : \; u(0) = \gamma_\perp \left( \Tens \; \grad (w_0) \right), \\
\text{there exists a unique solution } \; \z \in \mathcal C^1([0,\infty);\mathcal X_\kappa) \cap \mathcal C([0,\infty);\mathcal Z_\kappa), \text{ with } \; y \in \mathcal C([0,\infty);\mathcal Y_\kappa).
\end{multline}
\end{conjecture}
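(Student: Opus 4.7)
My plan is to prove \eqref{H0} by a combination of two standard techniques: a lifting of the non-homogeneous boundary control, followed by higher-order semigroup theory applied to the resulting homogeneous problem on the scale of spaces $\mathcal X_\kappa$. The base case $\kappa = 0$ is exactly the recalled result of \cite[Corollary~4.3]{KurZwa15}, on top of which one may either argue directly at level $\kappa$ or by induction on $\kappa$.

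First, I would construct a continuous lifting $\mathcal L : \mathcal U_\kappa \to \mathcal Z_\kappa$ of the Neumann-type boundary datum, associating to each $u \in \mathcal U_\kappa$ a pair $\mathcal L u \eqdef \matl \Alphaq_L \\ 0 \matr$ with $\gamma_\perp(\Tens\,\Alphaq_L) = u$. A natural choice is $\Alphaq_L \eqdef \grad \phi$ where $\phi \in H^{\kappa+2}(\Omega)$ solves the auxiliary elliptic problem $-\div(\Tens\,\grad \phi) + \phi = 0$ in $\Omega$, with $\gamma_\perp(\Tens\,\grad \phi) = u$ on $\partial\Omega$. Classical elliptic regularity on a $\mathcal C^{\kappa+2}$ domain, with sufficient smoothness of $\Tens$, yields the announced continuity. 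Setting $\widetilde \z \eqdef \z - \mathcal L u$, the new unknown should satisfy
$$
\partial_t \widetilde \z = \mathcal A_0 \widetilde \z + F(t), \qquad \widetilde \z(0) = \z_0 - \mathcal L u(0),
$$
where $\mathcal A_0$ denotes $\mathcal J \mathcal Q$ restricted by the homogeneous Neumann condition $\gamma_\perp(\Tens\,\Alphaq) = 0$ (the skew-adjoint restriction introduced in the excerpt), and $F(t) \eqdef \mathcal J \mathcal Q \mathcal L u(t) - \mathcal L \partial_t u(t)$. The compatibility assumption $u(0) = \gamma_\perp(\Tens\,\grad w_0)$ is precisely what makes $\widetilde \z(0)$ an element of $D(\mathcal A_0) \cap \mathcal X_\kappa$.

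The second step is to apply classical inhomogeneous semigroup theory on $\mathcal X_\kappa$. The operator $\mathcal A_0$ generates a unitary group on $\mathcal X_0$; the key technical point is to upgrade this to a strongly continuous (in fact unitary) group on $\mathcal X_\kappa$, which reduces to showing that the resolvent $(I - \mathcal A_0)^{-1}$ maps $\mathcal X_\kappa$ continuously into itself. This in turn is an elliptic estimate for a Neumann problem of the form $-\div(\Tens\,\grad \varphi) + \rho \varphi = G$ with $\gamma_\perp(\Tens\,\grad \varphi) = 0$, which holds under the smoothness assumptions of the conjecture. Granted this, the hypothesis $u \in \mathcal C^2([0,\infty); \mathcal U_\kappa)$ gives $F \in \mathcal C^1([0,\infty); \mathcal X_\kappa)$, and standard results on inhomogeneous evolution equations yield $\widetilde \z \in \mathcal C^1([0,\infty); \mathcal X_\kappa) \cap \mathcal C([0,\infty); D(\mathcal A_0))$, whence $\z = \widetilde \z + \mathcal L u \in \mathcal C^1([0,\infty); \mathcal X_\kappa) \cap \mathcal C([0,\infty); \mathcal Z_\kappa)$. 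The continuous trace operators recalled just before the conjecture then directly give $y \in \mathcal C([0,\infty); \mathcal Y_\kappa)$.

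The main obstacle I expect lies in the semigroup upgrade: establishing rigorously that $\mathcal A_0$ generates a unitary group on $\mathcal X_\kappa$ for every $\kappa \ge 1$ is what truly forces the extra smoothness assumptions on $\rho$, $\Tens$ and $\partial\Omega$, \emph{via} elliptic regularity for the associated Neumann problem. An alternative approach, which avoids working with $\mathcal A_0$ on the full $\mathcal X_\kappa$ scale, would be a pure induction on $\kappa$ by differentiating \eqref{eq:Hamilton-System}--\eqref{eq:Hamilton-System-Boundary} in time and applying the induction hypothesis to $\partial_t \z$, then recovering the remaining spatial regularity through the same elliptic estimate applied pointwise in $t$; but this variant has the drawback of requiring a priori that $\partial_t \z$ be \emph{itself} a solution of the same boundary control system, which again demands a careful density argument along the lines of \cite[Chapter~10]{TucWei09}. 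Either way, this is presumably why the authors prefer to state the result as a conjecture.
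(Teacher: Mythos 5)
First, note that the paper does not prove this statement: it is explicitly labelled a \emph{conjecture}, and the authors say both in the introduction to Section~\ref{Sec:Wave} and right after the statement that a proof ``goes beyond the scope of this work''. There is therefore no proof of record to compare yours with; what you have written is a proof plan, and it must be judged on whether the plan would go through. Your lifting step is standard and essentially sound (one small correction: Neumann data $u \in H^{\kappa-\frac12}(\partial\Omega)$ yields $\phi \in H^{\kappa+1}(\Omega)$, not $H^{\kappa+2}(\Omega)$, by elliptic regularity; this still places $\mathcal L u$ in $\mathcal Z_\kappa$, so the conclusion of that step survives).

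The genuine gap is in your second step. The claim that generating a group on $\mathcal X_\kappa$ ``reduces to showing that the resolvent maps $\mathcal X_\kappa$ continuously into itself'' is incorrect: a single resolvent bound does not give the uniform bounds on all powers required by Hille--Yosida on $\mathcal X_\kappa$, and in fact the wave group does \emph{not} leave $\mathcal X_\kappa$ invariant. The subspaces invariant under $e^{t \mathcal A_0}$ are the domains $D(\mathcal A_0^j)$, which carry additional boundary conditions (already at the second order, $\gamma_\perp \left( \Tens \grad \left( \rho^{-1} \alphap \right) \right) = 0$) that are absent from $\mathcal X_\kappa$ and $\mathcal Z_\kappa$. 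Concretely, take $N=1$, $\Omega=(0,1)$, $\rho\equiv1$, $\Tens\equiv1$, $u\equiv0$, $w_0=0$, $w_1(x)=x$: all hypotheses of \eqref{H0} with $\kappa=1$ hold, since $\z_0 = \matl 0 \\ x \matr \in \mathcal Z_1$ and the compatibility condition $u(0)=\gamma_\perp(\grad (w_0))=0$ is satisfied; yet d'Alembert's formula with even reflection gives $\Alphaq(x,t)=\min(x,t,1-x)$ for $0<t<\frac12$, whose divergence is a step function not in $H^1(0,1)$, so that $\z(t)\notin\mathcal Z_1$ for every $t>0$. The defect is that the first-order compatibility condition $\partial_t u(0)=\gamma_\perp \left( \Tens \grad \left( \rho^{-1} w_1 \right) \right)$ fails, and the resulting singularity propagates into the interior. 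Your step~2 therefore cannot be repaired as stated, and the same obstruction hits your alternative induction by time-differentiation: any correct argument must \emph{add} the compatibility conditions relating $\partial_t^j u(0)$ to the initial data for $1 \le j \le \kappa$. This example in fact shows that the conjecture itself, as written with only the zeroth-order compatibility condition, needs these extra hypotheses in order to have a chance of being true; identifying them is the real content hidden behind \eqref{H0}.
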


Roughly speaking, it claims that increasing the regularity on $\rho$, $\Tens$, $\partial\Omega$, $w_0$, $w_1$, and $u$, increases the space regularity of solutions (using the continuity and surjectivity of $\gamma_0$ and $\gamma_\perp$). This seems legitimate according to \cite[Corollary~4.3]{KurZwa15} (which includes the above case $\kappa=0$). Although this is not proved, this seems to be a reasonable conjecture at the mathematical level, \ie for existence and uniqueness of smooth solutions.

\begin{remark}\label{rem:utility-regularity}
The main purpose of~\eqref{H0} is to provide a relation between the maximal $\H^{\ell}(\div ; \Omega)$- and $H^k(\Omega)$-regularities of $\Alphaq$ and $\alphap$ respectively, allowing for an optimal choice of the order of the finite-dimensional spaces of approximation. Finally, it has to be kept in mind that Conjecture~\ref{conj:regularity} implies that if $\kappa$ is supposed to be the maximal regularity (in space) of $\Alphaq$, \ie $\Alphaq(t) \in \Tens^{-1} \; \H^\kappa(\div ; \Omega)$ but $\Alphaq(t) \not\in \Tens^{-1} \; \H^{\kappa+1}(\div ; \Omega)$, then $\kappa$ is also the maximal regularity (in space) of $\alphap$, \ie $\alphap(t) \in \rho H^\kappa(\Omega)$ but $\alphap(t) \not\in \rho H^{\kappa+1}(\Omega)$, and reciprocally.
\end{remark}

\subsection{The weak co-energy formulation}
\label{Sec:weak-formulation}

As it is intended to apply a conforming finite element method, it is mandatory to use a formulation which enables the use of available finite elements in numerical softwares without destroying the sparsity property of the FEM, \eg avoiding matrix inversion.

A simple way to achieve this is to work on the \emph{co-energy formulation}, which transfers the physical parameters from the right-hand side of~\eqref{eq:Hamilton-System} to its left-hand side, by inverting them \emph{at the continuous level}. More precisely, rewriting the initial system~\eqref{eq:Hamilton-System} by making use of the relations $\Alphaq = \Tens^{-1} \; \vector{e}_q$ and $\alphap = \rho e_p$, one obtains the equivalent port-Hamiltonian system, known as the \emph{co-energy formulation}
\begin{equation}\label{eq:Co-energy-Hamilton-System}
\matl \Tens^{-1} & 0 \\ 0 & \rho \matr \matl \ddt \vector{e}_q \\ \ddt e_p \matr = \mathcal J \matl \vector{e}_q \\ e_p \matr, \qquad u = \gamma_\perp \left( \vector{e}_q \right), \qquad y = \gamma_0 \left( e_p \right).
\end{equation}
At the discrete level, this gives rise to weighted mass matrices, carrying \emph{all} the physical parameters.

Multiplying~\eqref{eq:Co-energy-Hamilton-System} in $\L^2(\Omega) \times L^2(\Omega)$ by arbitrary test functions $\matl \v_q \\ v_p \matr$, one gets
$$
\psl \matl \Tens^{-1} & 0 \\ 0 & \rho \matr \matl \partial_t \vector{e}_q \\ \partial_t e_p \matr, \matl \v_q \\ v_p \matr \psr_{\L^2(\Omega) \times L^2(\Omega)} = \psl \matl 0 & \grad \\ \div & 0 \matr \matl \vector{e}_q \\ e_p \matr, \matl \v_q \\ v_p \matr \psr_{\L^2(\Omega) \times L^2(\Omega)}.
$$
which also reads
$$
\left\lbrace
\begin{array}{rl}
\psl \partial_t \vector{e}_q, \Tens^{-1} \; \v_q \psr_{\L^2(\Omega)} &= \psl \grad \left( e_p \right), \v_q \psr_{\L^2(\Omega)},\\
\psl \partial_t e_p, \rho \, v_p \psr_{L^2(\Omega)} &= \psl \div \left( \vector{e}_q \right), v_p \psr_{L^2(\Omega)}.
\end{array}
\right.
$$
At this stage, the boundary control in~\eqref{eq:Co-energy-Hamilton-System} does not appear in the formulation yet. To this end, we apply Green's formula~\eqref{eq:Green} on the second line only and obtain
\begin{equation}\label{eq:FV-continuous}
\left\lbrace
\begin{array}{rl}
\dsp \psl \partial_t \vector{e}_q, \Tens^{-1} \; \v_q \psr_{\L^2(\Omega)} &=\dsp \psl \grad \left( e_p \right), \v_q \psr_{\L^2(\Omega)},\\
\dsp \psl \partial_t e_p, \rho \, v_p \psr_{L^2(\Omega)} &=\dsp - \psl \vector{e}_q, \grad \left( v_p \right) \psr_{\L^2(\Omega)} + \psl u, \gamma_0 \left( v_p \right) \psr_{\mathcal U, \mathcal Y},
\end{array}
\right.
\end{equation}
remembering that $\mathcal U = H^{-\frac{1}{2}}(\partial \Omega)$ and $\mathcal Y = H^{\frac{1}{2}}(\partial\Omega)$. These equations make sense if $\v_q \in \L^2(\Omega)$ and $v_p \in H^1(\Omega)$. Altogether, the test functions have to belong to $\L^2(\Omega) \times H^{1}(\Omega)$ (note that this is neither $\mathcal X$ nor $\mathcal Z$ defined in Section~\ref{Sec:existence-uniqueness}).

\section{Structure-preserving discretisation}
\label{Sec:PFEM}

The aim of this section is to show how a simple integration by part on a \emph{partition} of the state space, as in the MFEM, is able to transforms a distributed-parameters port-Hamiltonian system (\ie infinite-dimensional) into a lumped-parameters port-Hamiltonian system (\ie finite-dimensional). The main difference with the MFEM is the non-homogeneous boundary condition (\ie the boundary control) applied to the partition to be integrated by part. The major interest of this scheme, known as the Partitioned Finite Element Method (PFEM)~\cite{CarMatLef19}, is that it directly leads to a discrete version of the power-balance satisfied by the discrete Hamiltonian, defined as the continuous one evaluated on the approximations of energy variables.

\subsection{Discrete weak formulation and matrix form}
\label{Sec:discrete-weak-formulation}

We are now in a position to discretize the system in space. Assume that we have at our disposal three finite dimensional spaces, typically given by finite elements respectively $\L^2(\Omega)$-conforming, $H^1(\Omega)$-conforming and $H^\frac{1}{2}(\partial\Omega)$-conforming
$$
\H_q \eqdef \span \left\{ \left( \phiq^i \right)_{i=1,\dots,N_q} \right\} \subset \L^2(\Omega) \quad \text{ of dimension } N_q \in \N,
$$
$$
H_p \eqdef \span \left\{ \left( \phip^k \right)_{k=1,\dots,N_p} \right\} \subset H^1(\Omega) \quad \text{ of dimension } N_p \in \N,
$$
and
$$
H_\partial \eqdef \span \left\{ \left( \psi^m \right)_{m=1,\dots,N_\partial}\right\} \subset H^\frac{1}{2}(\partial\Omega) \quad \text{ of dimension } N_\partial \in \N.
$$

\begin{remark}
Note that the boundary basis $\left( \psi^m \right)_{m=1,\dots,N_\partial}$, used to approximate both $u$ and $y$ for simplicity, is chosen to be $H^{\frac{1}{2}}(\partial\Omega)$-conforming, while $u$ only required $H^{-\frac{1}{2}}(\partial\Omega)$ to be approximated in a conforming manner. This is a convenient assumption which leads to the usual $L^2(\partial\Omega)$-inner product at the boundary, without loss of generality as soon as $\kappa \ge 1$. 
\end{remark}

Let us approximate the function $\vector{e}_q$ in $\H_q$ by
$$
\vector{e}_q(t,\x) \simeq \vector{e}_q^d(t,\x) \eqdef \sum_{i=1}^{N_q} e_q^i(t) \phiq^i(\x) = \left(\Phiq(\x)\right)^\top \; \equ(t), \Forall t\ge0, \x \in \Omega,
$$
where for all $t\ge0$ and $\x\in\Omega$, we introduce the compact notations
$$
\Phiq(\x) \eqdef \matl \left(\phiq^1(\x)\right)^\top \\ \vdots \\ \left(\phiq^{N_q}(\x)\right)^\top \matr \in \R^{N_q \times N}, \qquad \qquad \equ(t) \eqdef \matl e_q^1(t) \\ \vdots \\ e_q^{N_q}(t) \matr \in \R^{N_q}.
$$
In the same way, $e_p$ is approximated in $H_p$ by
$$
e_p(t,\x) \simeq e_p^d(t,\x) \eqdef \sum_{k=1}^{N_p} e_p^k(t) \phip^k(\x) = \left(\Phip(\x)\right)^\top \; \epu(t), \Forall t\ge0, \x \in \Omega,
$$
where for all $t\ge0$ and all $\x\in\Omega$, we have the compact notations
$$
\Phip(\x) \eqdef \matl \phip^1(\x) \\ \vdots \\ \phip^{N_p}(\x) \matr \in \R^{N_p}, \qquad \qquad \epu(t) \eqdef \matl e_p^1(t) \\ \vdots \\ e_p^{N_p}(t) \matr \in \R^{N_p}.
$$
Finally, $u$ is approximated in $H_\partial$ by
$$
u(t,\s) \simeq u^d(t,\s) \eqdef \sum_{m=1}^{N_\partial} u^m(t) \psi^m(\s) = \left(\Psi(\s)\right)^\top \; \underline{u}(t), \Forall t\ge0, \s \in \partial\Omega,
$$
where for all $t\ge0$ and all $\s\in\partial\Omega$, we also use the compact notations
$$
\Psi(\s) \eqdef \matl \psi^1(\s) \\ \vdots \\ \psi^{N_\partial}(\s) \matr \in \R^{N_\partial}, \qquad \qquad \underline{u}(t) \eqdef \matl u^1(t) \\ \vdots \\ u^{N_\partial}(t) \matr \in \R^{N_\partial}.
$$
It is now possible to formulate the discrete variational formulation from the continuous one~\eqref{eq:FV-continuous} on $\H_q \times H_p \times H_\partial$: for all $j=1, \dots, N_q$ and all $\ell=1, \dots, N_p$, we are seeking for $(\vector{e}_q^d,e_p^d) \in \H_q \times H_p$ such that
\begin{equation}\label{eq:FV-discrete}
\left\lbrace
\begin{array}{rl}
\psl \partial_t \vector{e}_q^d, \Tens^{-1} \; \phiq^j \psr_{\L^2(\Omega)} &= \psl \grad \left( e_p^d \right), \phiq^j \psr_{\L^2(\Omega)},\\
\psl \partial_t e_p^d, \rho \, \phip^\ell \psr_{L^2(\Omega)} &= - \psl \vector{e}_q^d, \grad \left( \phip^\ell \right) \psr_{\L^2(\Omega)} + \psl u^d, \gamma_0 \left( \phip^\ell \right) \psr_{L^2(\partial\Omega)}.
\end{array}
\right.
\end{equation}
From the definition of $\vector{e}_q^d$, $e_p^d$ and $u^d$, this leads to
\begin{equation}\label{eq:system-FV-def-discrete}
\left\lbrace
\begin{array}{rl}
\dsp \sum_{i=1}^{N_q} \ddt e_q^i \psl \phiq^i, \Tens^{-1} \; \phiq^j \psr_{\L^2(\Omega)} &=\dsp \sum_{k=1}^{N_p} e^k_p \psl \grad \left( \phip^k \right), \phiq^j \psr_{\L^2(\Omega)},\\
\dsp \sum_{k=1}^{N_p} \ddt e_p^k \psl \phip^k, \rho \, \phip^\ell \psr_{L^2(\Omega)} &=\dsp - \sum_{i=1}^{N_q} e_q^i \psl \phiq^i, \grad \left( \phip^\ell \right) \psr_{\L^2(\Omega)} \\
& \dsp \hspace{10em} + \sum_{m=1}^{N_\partial} u^m \psl \psi^m, \gamma_0 \left( \phip^\ell \right) \psr_{L^2(\partial\Omega)}.
\end{array}
\right.
\end{equation}
Now, denoting
$$
\grad \left( \Phip \right) \eqdef \matl \left( \grad \left( \phip^1 \right) \right)^\top \\ \vdots \\ \left( \grad \left( \phip^{N_p} \right) \right)^\top \matr \quad \in \R^{N_p \times N},
$$
the gradient of the $p$-type family (which is $H^1(\Omega)$-conforming by hypothesis),
$$
M_{\Tens^{-1}} \eqdef \int_\Omega \Phiq(\x) \; \Tens^{-1}(\x) \; \left(\Phiq(\x)\right)^\top \dd \x \quad \in \R^{N_q \times N_q},
$$
$$
M_{\rho} \eqdef \int_\Omega \rho(\x) \, \Phip(\x) \; \left(\Phip(\x)\right)^\top \dd \x \quad \in \R^{N_p \times N_p},
$$
the mass matrices taking the metric of $\mathcal X$ into account,
$$
D \eqdef \int_\Omega \Phiq(\x) \; \left(\grad \left( \Phip(\x) \right) \right)^\top \dd \x \quad \in \R^{N_q \times N_p},
$$
the \emph{averaged gradient} and
$$
B_\partial \eqdef \int_{\partial\Omega} \gamma_0 \left( \Phip \right)(\s) \; \left( \Psi(\s) \right)^\top \dd \s \quad \in \R^{N_p \times N_\partial},
$$
the discrete boundary control operator, we get the following finite-dimensional dynamical system from~\eqref{eq:system-FV-def-discrete}
\begin{equation}\label{eq:dim-finie}
\matl M_{\Tens^{-1}} & 0 \\ 0 & M_{\rho} \matr \; \ddt \matl \equ(t) \\ \epu(t) \matr = \matl 0 & D \\ -D^\top & 0 \matr \; \matl \equ(t) \\ \epu(t) \matr + \matl 0 \\ B_\partial \matr \; \underline{u}(t), \Forall t\ge0.
\end{equation}
Finally, defining
$$
M_\partial \eqdef \int_{\partial\Omega} \Psi(\s) \; (\Psi(\s))^\top \dd \s \quad \in \R^{N_\partial \times N_\partial},
$$
the boundary mass matrix and
$$
\mathcal B_\partial \eqdef \matl 0 \\ B_\partial \matr \quad \in \R^{(N_q + N_p) \times N_\partial},
$$
the extended boundary control operator, the output is given for all $t\ge0$ by
\begin{equation}\label{eq:discrete-observation}
M_\partial \; \underline{y}(t) \eqdef \mathcal B_\partial^\top \; \matl \equ(t) \\ \epu(t) \matr = B_\partial^\top \; \epu(t).
\end{equation}
Now, system~\eqref{eq:dim-finie}--\eqref{eq:discrete-observation} is a finite-dimensional port-Hamiltonian system.

\begin{remark}
One can gather equations~\eqref{eq:dim-finie}--\eqref{eq:discrete-observation} under the flows--efforts formulation often used in the port-Hamiltonian systems community
\begin{equation}\label{eq:flows-efforts}
\matl M_{\Tens^{-1}} & 0 & 0 \\ 0 & M_{\rho} & 0 \\ 0 & 0 & M_\partial \matr \; \matl \ddt \equ(t) \\ \ddt \epu(t) \\ - \underline{y}(t) \matr = \matl 0 & D & 0 \\ -D^\top & 0 & B_\partial \\ 0 & -B_\partial^\top & 0 \matr \; \matl \equ(t) \\ \epu(t) \\ \underline{u}(t) \matr.
\end{equation}
The block diagonal symmetric positive-definite matrix constituted by the mass matrices on the left-hand side carry the physical parameters by taking the induced metric into account. On the right-hand side, the skew-symmetric matrix is known as the \emph{extended structure matrix}. Together, they give a \emph{kernel representation}~\cite{SchJel14} of the underlying Dirac structure~\cite{SerMatHai19d}.
\end{remark}

At this stage, the co-energy formulation~\eqref{eq:Co-energy-Hamilton-System} of the infinite-dimensional port-Hamiltonian system~\eqref{eq:Hamilton-System}--\eqref{eq:Hamilton-System-Boundary} has been accurately discretized as a finite-dimensional port-Hamiltonian system~\eqref{eq:dim-finie}--\eqref{eq:discrete-observation}. The next step is to define a discrete version of the Hamiltonian $\Ham$ in order to perfectly mimic the power balance~\eqref{eq:variation-Hamiltonian}.

\begin{definition}\label{def:discrete-Hamiltonian}
The discrete Hamiltonian $\Ham^d$ is defined as the evaluation of $\Ham$ on the approximations $\Alphaq^d \eqdef \Tens^{-1} \; \vector{e}_q^d$, and $\alphap^d \eqdef \rho \, e_p^d$, namely
$$
\Ham^d(t) \eqdef \Ham(\Alphaq^d(t), \alphap^d(t)).
$$
\end{definition}

\begin{proposition}\label{prop:discrete-power-balance}
The discrete Hamiltonian $\Ham^d$ reads
\begin{equation}\label{eq:Hamilton-System-Discrete}
\Ham^d(t) = \frac{1}{2} \left(\equ(t)\right)^\top \; M_{\Tens^{-1}} \; \equ(t) 
+ \frac{1}{2} \left(\epu(t)\right)^\top \; M_{\rho} \; \epu(t).
\end{equation}
For all $t\ge0$, the following power balance holds
\begin{equation}\label{eq:variation-Hamiltonian-discrete}
\begin{array}{ll}
\dsp
\dsp \ddt \Ham^d(t) 
&=\dsp 
\left( \underline{u}(t) \right)^\top \; M_{\partial} \; \underline{y}(t), \\
&=\dsp 
\psl u^d(t), y^d(t) \psr_{L^2(\partial\Omega)},
\end{array}
\end{equation}
which is the discrete counterpart of~\eqref{eq:variation-Hamiltonian}.
\end{proposition}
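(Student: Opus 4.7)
The plan is to establish the quadratic expression~\eqref{eq:Hamilton-System-Discrete} by direct substitution in the definition of $\Ham$, and then to obtain the power balance~\eqref{eq:variation-Hamiltonian-discrete} by differentiating it in time and combining the dynamics~\eqref{eq:dim-finie} with the output law~\eqref{eq:discrete-observation}.

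For the first identity, I would substitute $\Alphaq^d = \Tens^{-1} \vector{e}_q^d$ and $\alphap^d = \rho \, e_p^d$ directly into~\eqref{eq:Hamiltonian}. The first summand reduces to $(\vector{e}_q^d)^\top \Tens^{-1} \vector{e}_q^d$ thanks to the symmetry of $\Tens$, while the second becomes $\rho \, (e_p^d)^2$. Inserting then the modal decompositions $\vector{e}_q^d(t,\x) = (\Phiq(\x))^\top \equ(t)$ and $e_p^d(t,\x) = (\Phip(\x))^\top \epu(t)$ allows the time-dependent coefficients to be factored outside the integrals over $\Omega$. The remaining space integrals are precisely the mass matrices $M_{\Tens^{-1}}$ and $M_\rho$ introduced in Section~\ref{Sec:discrete-weak-formulation}, yielding formula~\eqref{eq:Hamilton-System-Discrete}.

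For the power balance, differentiating~\eqref{eq:Hamilton-System-Discrete} and using the symmetry of $M_{\Tens^{-1}}$ and $M_\rho$ gives $\ddt \Ham^d = \equ^\top M_{\Tens^{-1}} \ddt \equ + \epu^\top M_\rho \ddt \epu$. Substituting the right-hand sides provided by~\eqref{eq:dim-finie} transforms this into $\equ^\top D \epu + \epu^\top \left( -D^\top \equ + B_\partial \underline{u} \right)$, and the first two scalar terms cancel each other by transposition (this is the discrete counterpart of the skew-symmetry of the block interconnection matrix). Only the boundary contribution $\epu^\top B_\partial \underline{u} = \underline{u}^\top B_\partial^\top \epu$ survives, and the output definition~\eqref{eq:discrete-observation} rewrites it as $\underline{u}^\top M_\partial \underline{y}$.

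Finally, inserting $u^d = \Psi^\top \underline{u}$ and $y^d = \Psi^\top \underline{y}$ into the boundary $L^2$-pairing and recognizing $M_\partial = \int_{\partial\Omega} \Psi \, \Psi^\top \dd \s$ makes both expressions in~\eqref{eq:variation-Hamiltonian-discrete} coincide. No delicate point arises: the proof is essentially an algebraic check. The conceptual message worth emphasizing is that the block skew-symmetry of the interconnection matrix, together with the collocation between $B_\partial$ and $B_\partial^\top$, is exactly what mimics the continuous Stokes-Dirac structure at the discrete level and justifies calling the discretization structure-preserving.
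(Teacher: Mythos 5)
Your proof is correct and follows essentially the same route as the paper: the paper differentiates $\Ham^d$ and invokes the skew-symmetry of the $3\times3$ extended structure matrix in~\eqref{eq:flows-efforts}, whereas you substitute~\eqref{eq:dim-finie} directly and cancel the $\equ^\top D\,\epu$ terms by transposition before using~\eqref{eq:discrete-observation} — the same algebra packaged slightly differently. Your explicit verification of the second equality in~\eqref{eq:variation-Hamiltonian-discrete} via the modal expansions of $u^d$ and $y^d$ is a harmless addition the paper leaves implicit.
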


\begin{proof}
The equality~\eqref{eq:Hamilton-System-Discrete} is straightforward.

Clearly
$$
\frac{\dd}{\dd t} \Ham^d(t) =\left( \equ(t) \right)^\top \; M_{\Tens^{-1}} \; \left( \ddt \equ(t) \right) + \left( \epu (t) \right)^\top \; M_{\rho} \; \left( \ddt \epu(t) \right),
$$
thanks to the symmetry of the mass matrices.

Multiplying~\eqref{eq:flows-efforts} by $\matl \equ(t) \\ \epu(t) \\ \underline{u}(t) \matr$ on the left leads to
$$
\left( \equ(t) \right)^\top \; M_{\Tens^{-1}} \; \left( \ddt \equ(t) \right) + \left( \epu (t) \right)^\top \; M_{\rho} \; \left( \ddt \epu(t) \right) = \left(\underline{u}(t)\right)^\top \; M_\partial \; \underline{y}(t),
$$
thanks to the skew-symmetry of the extended structure matrix, and the result follows.
\end{proof}

\subsection{Other causalities}
\label{Sec:switched-system}

The proposed strategy can handle other causalities, \ie other collocated boundary control and observation, in a straightforward manner. Let us focus on the other uniform causality, \ie with deflection velocity control.

It only consists on switching the role played by $u$ and $y$, \ie replace~\eqref{eq:waves-boundary} by
\begin{equation}\label{eq:waves-boundary-switch}\tag{\ref{eq:waves-boundary}S}
\left\{\begin{array}{ll}
\widetilde u(t,\x) = \partial_t w(t,\x),& \Forall \x \in \partial\Omega, t \ge 0, \\
\widetilde y(t,\x) = \left( \Tens(\x) \; \grad (w(t,\x)) \right)^\top \; \n(\x),& \Forall \x \in \partial\Omega, t \ge 0,
\end{array}\right.
\end{equation}
leading to
\begin{equation}\label{eq:FV-continuous-switch}\tag{\ref{eq:FV-continuous}S}
\left\lbrace
\begin{array}{rl}
\dsp \psl \partial_t \vector{e}_q, \Tens^{-1} \; \v_q \psr_{\L^2(\Omega)} &=\dsp - \psl e_p, \div \left( \v_q \right) \psr_{L^2(\Omega)} + \psl \gamma_\perp \left( \v_q \right), \widetilde u \psr_{L^2(\partial\Omega)},\\
\dsp \psl \partial_t e_p,\rho \, v_p \psr_{L^2(\Omega)} &=\dsp \psl \div \left( \vector{e}_q \right), v_p \psr_{L^2(\Omega)},
\end{array}
\right.
\end{equation}
instead of~\eqref{eq:FV-continuous}. The PFEM would then provide the following matrices
$$
\widetilde D = - \int_\Omega \div \left( \Phiq(\x) \right) \; \left( \Phip(\x) \right)^\top \dd \x, \quad \widetilde B_\partial = \int_{\partial\Omega} \Psi(\s) \; \left( \gamma_\perp \left( \Phiq \right)(\s)\right)^\top \dd \s,
$$
such that
$$
\matl M_{\Tens^{-1}} & 0 & 0 \\ 0 & M_{\rho} & 0 \\ 0 & 0 & M_\partial \matr \matl \ddt \equ(t) \\ \ddt \epu(t) \\ - \underline{\widetilde{y}}(t) \matr = \matl 0 & \widetilde D & \widetilde B_\partial \\ -\widetilde{D}^\top & 0 & 0 \\ -\widetilde{B}_\partial^\top & 0 & 0 \matr \matl \equ(t) \\ \epu(t) \\ \underline{\widetilde{u}}(t) \matr
$$
In this case, $\H_q$ is chosen $\H(\div ; \Omega)$-conforming and $H_p$ only $L^2(\Omega)$-conforming.

\begin{remark}
More complex causalities, such as mixed boundary controls, boundary damping, etc., can be handled in the same manner, as it will be done in Section~\ref{Sec:Simu-damping} for absorbing boundary condition. We refer to~\cite{SerMatHai19a,BruCarHaiKot20} for more details.
\end{remark}

\section{Numerical analysis}
\label{Sec:AnaNum}

This section is the core of this work: we state the main theorems and provide their proof. They are given under usual assumptions for Galerkin methods, gathered below from \eqref{H1} to \eqref{H5}, that prove classical for the finite element method (see \eg \cite{BofBreFor13,Gat14}). The aim of such an abstract numerical analysis is to propose a general framework to deal with several kinds of approximation families at the same time. This allows for recent developments such that, \eg, conforming discontinuous Galerkin elements on rectangular mesh~\cite{FenLiuWanZha21}, or even for meshfree methods~\cite{OliPor16}. Next, Section~\ref{Sec:AnaNum-PkRTl} shall focus on  well-known examples of suitable choices of  such families of conforming finite elements.

Let us consider the following
\begin{itemize}
\item $\mathcal E^{\mathcal X}(t) \eqdef \nol \matl \Alphaq(t) \\ \alphap(t) \matr - \matl \Alphaq^d(t) \\ \alphap^d(t) \matr \nor_{\mathcal X}$ the absolute error in $\mathcal X$ between the continuous and the discrete energy variables;
\item
$\mathcal E^\Ham(t) \eqdef \Ham(t) - \Ham^d(t)$ the error between the continuous and the discrete Hamiltonians.
\end{itemize}
The aim is to analyse the asymptotic behaviour of those errors, when the values of $N_q$, $N_p$ and $N_\partial$ tends towards $\infty$ (\eg when the mesh size parameter tends towards $0$). Furthermore, the best trade-off between the discretization orders of $\H_q$, $H_p$ and $H_\partial$ are provided: the number of degrees of freedom is minimized for each fixed desired convergence rate.

\begin{remark}
Thanks to~\eqref{eq:variation-Hamiltonian-discrete}, it holds $\mathcal E^\Ham(t) \eqdef \Ham(t) - \Ham^d(t) = \Ham(0) - \Ham^d(0)$ for all $t\ge0$, as soon as the system is closed (\ie with $u \equiv 0$). This result is well-known since several decades using the MFEM~\cite{Jol03}.
\end{remark}

\begin{remark}
Theorem~\ref{th:General-State} could be obtained by considering the homogeneous isotropic case only (corresponding to an identification between energy and co-energy variables). However, anisotropy and heterogeneity induce mandatory modifications about the compatibility conditions allowing for preserving de Rham cohomology, as will be shown in Theorem~\ref{th:General-Hamiltonian}.
\end{remark}

\subsection{Notations, hypotheses and basic properties}

In the sequel, the following general hypotheses are assumed. These assumptions are made of 
\begin{itemize}
\item usual Galerkin estimates on $H^1$ and $\L^2$;
\item an inverse inequality between the $H^1$- and $L^2$-norms on the \emph{finite-dimensional} space $H_p \subset H^1(\Omega)$;
\item an estimate of the $L^2$-projection in the $H^1$-norm, which proves useful to get optimality.
\end{itemize}

\subsubsection{Notations}

Let us denote
\begin{itemize}
\item $h \in(0,h^*)$ a parameter vow to tends to $0$, where $h^*>0$ (\ie $h$ is small enough);
\item $P_p$ the $L^2(\Omega)$-orthogonal projector from $L^2(\Omega)$ onto $H_p$;
\item $P_{1,p}$ the $H^1(\Omega)$-orthogonal projector from $H^1(\Omega)$ onto $H_p$;
\item $\Pb_q$ the $\L^2(\Omega)$-orthogonal projector from $\L^2(\Omega)$ onto $\H_q$.
\end{itemize}
In order to take into account the metric induced by the operator $\mathcal Q$ on $\mathcal X$, we also introduce
\begin{itemize}
\item 
$\calPb_q$ the orthogonal projector from $\L^2(\Omega)$ endowed with the weighted inner product $\psl \v_1, \Tens \; \v_2 \psr_{\L^2}$ for all $\v_1, \v_2 \in \L^2(\Omega)$, onto $\V_q \eqdef \Tens^{-1} \H_q$;
\item 
$\mathcal P_p$ the orthogonal projector from $L^2(\Omega)$ endowed with the weighted inner product $\psl v_1, \rho^{-1} v_2 \psr_{L^2}$ for all $v_1, v_2 \in L^2(\Omega)$, onto $V_p \eqdef \rho H_p$.
\end{itemize}

\subsubsection{Hypotheses}

There exists $h^*>0$ such that for all $\kappa\ge0$,
\begin{equation}\label{H1}\tag{H1}
\exists C_p > 0, \; \exists \theta_p \ge 0 \; : \; \nol P_p v_p - v_p \nor_{L^2(\Omega)} \le C_p ~ h^{\theta_p} ~ \nol v_p \nor_{H^{\kappa+1}(\Omega)}, 
\Forall v_p \in H^{\kappa+1}(\Omega), \; \forall h \in (0,h^*);
\end{equation}
\begin{multline}\label{H2}\tag{H2}
\exists C_{1,p} > 0, \; \exists \theta_{1,p} \ge 0 \; : \; \nol P_{1,p} v_p - v_p \nor_{H^1(\Omega)} \le C_{1,p} ~ h^{\theta_{1,p}} ~ \nol v_p \nor_{H^{\kappa+1}(\Omega)}, \\
\Forall v_p \in H^{\kappa+1}(\Omega), \; \forall h \in (0,h^*);
\end{multline}
\begin{equation}\label{H3}\tag{H3}
\exists C_q > 0, \; \exists \theta_q \ge 0 \; : \; \nol \Pb_q \v_q - \v_q \nor_{\L^2(\Omega)} \le C_q ~ h^{\theta_q} ~ \nol \v_q \nor_{\H^{\kappa+1}(\div ; \Omega)}, 
\Forall \v_q \in \H^{\kappa+1}(\div ; \Omega), \; \forall h \in (0,h^*);
\end{equation}
\begin{equation}\label{H4}\tag{H4}
\exists C_{1,0} > 0, \; \exists \theta_{1,0} \ge 0 \; : \; \nol \grad \left( v_p^d \right) \nor_{\L^2(\Omega)} \le C_{1,0} ~ h^{-\theta_{1,0}} ~ \nol v_p^d \nor_{L^2(\Omega)}, 
\Forall v_p^d \in H_p, \; \forall h \in (0,h^*);
\end{equation}
\begin{multline}\label{H5}\tag{H5}
\exists C_{0,1} > 0, \; \exists \theta_{0,1} \ge 0 \; : \; \nol P_p v_p - v_p \nor_{H^1(\Omega)} \le C_{0,1} ~ h^{-\theta_{0,1}} ~ \nol P_{1,p} v_p - v_p \nor_{H^1(\Omega)}, \\
\Forall v_p \in H^1(\Omega), \; \forall h \in (0,h^*).
\end{multline}

Note that in general, \eqref{H5} can be deduced from \eqref{H4} (see Lemma~\ref{lem:inverse-inequality} in Section~\ref{Sec:Appendix}). However this estimate can be strengthened (\ie $\theta_{0,1} < \theta_{1,0}$) in many cases in practice, typically with simplicial, regular and quasi-uniform meshes, one has $\theta_{0,1}=0$ thanks to the so-called Aubin-Nitsche trick~\cite{GirRav86,Sayas2004,Gat14}. It is thus given separately to ensure the optimality of the result.

\subsubsection{Basic properties}

It is important to notice the obvious properties between the projectors defined above, and denoted by straight or curly font. These will be useful in the sequel to get from the metric induced by $\mathcal Q$ to the usual one on $\left( L^2(\Omega) \right)^{N+1}$, and conversely.
\begin{itemize}
\item 
$\Tens^{-1} \Pb_q \Tens$ is a projector from $\L^2(\Omega)$ endowed with the inner product $\psl \v_1, \Tens \; \v_2 \psr_{\L^2}$ for all $\v_1, \v_2 \in \L^2(\Omega)$, onto $\V_q$;
\item 
$\rho P_p \rho^{-1}$ is a projector from $L^2(\Omega)$ endowed with the inner product $\psl v_1, \rho^{-1} v_2 \psr_{L^2}$ for all $v_1, v_2 \in L^2(\Omega)$, onto $V_p$;
\item 
$\Tens \calPb_q \Tens^{-1}$ is a projector from $\L^2(\Omega)$ onto $\H_q$;
\item 
$\rho^{-1} \mathcal P_p \rho$ is a projector from $L^2(\Omega)$ onto $H_p$.
\end{itemize}
Orthogonality of $\Pb_q$ in $\L^2(\Omega)$ and $P_p$ in $L^2(\Omega)$ imply that
\begin{equation}\label{eq:relations-between-proj}
\begin{array}{rcll}
\dsp \nol \v - \Pb_q \v \nor_{\L^2} 
&\le&\dsp \nol \v - \Tens \calPb_q \Tens^{-1} \v \nor_{\L^2},
&\dsp \Forall \v \in \L^2(\Omega), \\
\dsp \nol v - P_p v \nor_{L^2} 
&\le&\dsp \nol v - \rho^{-1} \mathcal P_p \rho ~ v \nor_{L^2},
&\dsp \Forall v \in L^2(\Omega), \\
\dsp \nol \Tens^{\frac{1}{2}} \left( \v - \calPb_q \v \right) \nor_{\L^2} 
&\le&\dsp \nol \Tens^{\frac{1}{2}} \left( \v - \Tens^{-1} \Pb_q \Tens ~ \v \right) \nor_{\L^2},
&\dsp \Forall \v \in \L^2(\Omega), \\
\dsp \nol \rho^{-\frac{1}{2}} \left( v - \mathcal P_p v \right) \nor_{L^2} 
&\le&\dsp \nol \rho^{-\frac{1}{2}} \left( v - \rho P_p \rho^{-1} v \right) \nor_{L^2},
&\dsp \Forall v \in L^2(\Omega).
\end{array}
\end{equation}
Thus, for all $\matl \Alphaq \\ \alphap \matr \in \mathcal X$
\begin{equation}\label{eq:proj-calP-in-X-into-P}
\nol \matl \Alphaq \\ \alphap \matr - \matl \calPb_q & 0 \\ 0 & \mathcal P_p \matr \matl \Alphaq \\ \alphap \matr \nor_{\mathcal X} 
\le \nol \matl \Alphaq \\ \alphap \matr - \matl \Tens^{-1} \Pb_q \Tens & 0 \\ 0 & \rho P_p \rho^{-1} \matr \matl \Alphaq \\ \alphap \matr \nor_{\mathcal X},
\end{equation}
and~\eqref{H1} to~\eqref{H5} can be written for the curly projectors $\mathcal P_p$ and $\calPb_q$ in their respective metric, using the upper and lower bounds of the physical parameters $\rho$ and $\Tens$.

\subsection{From energy variables to co-energy variables}

The main results of this section are Theorem~\ref{th:General-State} for $\mathcal E^{\mathcal X}$, and Theorem~\ref{th:General-Hamiltonian} for $\mathcal E^\Ham$ giving the convergence rates in term of those in \eqref{H1}--\eqref{H5}. Theorem~\ref{th:General-Hamiltonian} for the convergence rate of $\mathcal E^\Ham$ emphasizes the interest of the well-known compatibility conditions, appearing \eg for the finite-dimensional spaces to satisfy an exact sequence, mimicking the de Rham cohomology.

As to avoid introduction of unnecessary notations, the numerical analysis will be carried out making use of the energy variables in $\mathcal X$, since this is the natural state space identified in Section~\ref{Sec:Wave}~\cite{KurZwa15}, thanks to the following (obvious) lemma.
\begin{lemma}\label{lem:energy--co-energy}
One has
$$
\mathcal E^{\mathcal X}(t) \eqdef \nol \matl \Alphaq(t) \\ \alphap(t) \matr - \matl \Alphaq^d(t) \\ \alphap^d(t) \matr \nor_{\mathcal X} 
= \nol \matl \Tens^{-\frac{1}{2}} & 0 \\ 0 & \rho^\frac{1}{2} \matr \left[ \matl \vector{e}(t) \\ e_p(t) \matr - \matl \vector{e}_q^d(t) \\ e_p^d(t) \matr \right] \nor_{\left( L^2(\Omega) \right)^{N+1}}.
$$
\end{lemma}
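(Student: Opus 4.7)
The statement is indeed a direct computation, and my plan is simply to expand the $\mathcal X$-norm on the left-hand side using the definition of $\mathcal Q$ together with the constitutive relations $\Alphaq = \Tens^{-1} \vector{e}_q$ and $\alphap = \rho\, e_p$ (and their discrete counterparts $\Alphaq^d \eqdef \Tens^{-1}\vector{e}_q^d$, $\alphap^d \eqdef \rho\, e_p^d$ from Definition~\ref{def:discrete-Hamiltonian}).

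First I would write out the squared $\mathcal X$-norm using the weighted inner product introduced in Section~\ref{Sec:existence-uniqueness}, namely
$$
\left(\mathcal E^{\mathcal X}(t)\right)^2 = \int_\Omega \Bigl[\bigl(\Alphaq(t,\x)-\Alphaq^d(t,\x)\bigr)^\top \Tens(\x)\bigl(\Alphaq(t,\x)-\Alphaq^d(t,\x)\bigr) + \rho(\x)^{-1}\bigl(\alphap(t,\x)-\alphap^d(t,\x)\bigr)^2\Bigr]\dd\x.
$$
Next, since $\Tens$ is symmetric positive-definite (a.e.) and bounded below by $T_- \overline{\overline{\boldsymbol I}}>0$, and $\rho$ is bounded below by $\rho_->0$, the tensors $\Tens^{-1}$, $\Tens^{-1/2}$, $\rho^{1/2}$ and $\rho^{-1/2}$ are all well-defined (a.e.) with $\Tens^{-1/2}\Tens\,\Tens^{-1/2}=\Tens^{-1}$ pointwise. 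Substituting $\Alphaq-\Alphaq^d = \Tens^{-1}(\vector{e}_q-\vector{e}_q^d)$ and $\alphap-\alphap^d = \rho(e_p-e_p^d)$ into the integrand yields, at almost every $\x\in\Omega$,
$$
(\vector{e}_q-\vector{e}_q^d)^\top \Tens^{-1}\Tens\,\Tens^{-1}(\vector{e}_q-\vector{e}_q^d) + \rho^{-1}\rho^2 (e_p-e_p^d)^2
= \bigl|\Tens^{-\frac{1}{2}}(\vector{e}_q-\vector{e}_q^d)\bigr|^2 + \rho\,(e_p-e_p^d)^2.
$$

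Integrating over $\Omega$ and recognizing the right-hand side as the squared $\left(L^2(\Omega)\right)^{N+1}$-norm of the vector $\mathrm{diag}(\Tens^{-1/2},\rho^{1/2})\bigl[(\vector{e}_q,e_p)^\top-(\vector{e}_q^d,e_p^d)^\top\bigr]$ gives the claimed identity. There is no hard step here: the only thing to check is that the substitutions and the pointwise algebra are legitimate, which follows from the essential bounds on $\rho$ and $\Tens$ assumed in Section~\ref{Sec:Wave}. I would keep the proof to a few lines, in the spirit of the author's remark that the lemma is \emph{obvious}, since its role is merely to justify carrying out the numerical analysis in the energy variables (where the norm is natural) while stating the final error estimates in terms of the co-energy variables that are actually computed by the PFEM scheme.
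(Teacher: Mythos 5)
Your proof is correct and follows exactly the paper's (one-line) argument: the lemma is immediate from the constitutive relations $\Alphaq=\Tens^{-1}\,\vector{e}_q$, $\alphap=\rho\,e_p$ and their discrete counterparts, which is precisely the substitution you carry out. One small typo to fix before splicing this in: the aside identity $\Tens^{-\frac{1}{2}}\,\Tens\,\Tens^{-\frac{1}{2}}=\Tens^{-1}$ is false (the left-hand side is the identity tensor); you mean $\Tens^{-\frac{1}{2}}\,\Tens^{-\frac{1}{2}}=\Tens^{-1}$, or equivalently $\Tens^{-1}\,\Tens\,\Tens^{-1}=\Tens^{-1}$, and the displayed pointwise computation that actually does the work is correct as written.
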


\begin{proof}
The proof is straightforward thanks to the constitutive relations $\Alphaq = \Tens \; \vector{e}_q$ and $\alphap = \rho^{-1} e_p$.
\end{proof}

\subsection{The state absolute error}

\begin{theorem}\label{th:General-State}
Let $\kappa>0$ be an integer and $\Omega$ be of class $\mathcal C^{\kappa+2}$. There exists a constant $C_{\mathcal X}>0$ such that for all $T>0$, all initial data $\matl {\Alphaq}_0 \\ {\alphap}_0 \matr \in \mathcal Z_{\kappa}$, all $u \in \mathcal C^2([0,\infty);\mathcal U_\kappa)$ such that $u(0) = \gamma_\perp \left( \Tens \; {\Alphaq}_0 \right)$, and all $h\in (0,h^*)$
\begin{multline}\label{eq:th-General-State}
\mathcal E^{\mathcal X}(t)
\le \nol \matl \calPb_q & 0 \\ 0 & \mathcal P_p \matr \matl {\Alphaq}_0 \\ {\alphap}_0 \matr - \matl \Alphaq^d(0) \\ \alphap^d(0) \matr \nor_{\mathcal X} 
+ C_{\mathcal X} \max \{ 1, T \} ~ h^{\theta^*} ~ \nol \matl \Alphaq \\ \alphap \matr \nor_{L^\infty([0,T];\mathcal Z_{\kappa})} \\
+ C_{\mathcal X} T ~ h^{-\theta_{1,0}} ~ \nol u - u^d \nor_{L^\infty([0,T];\mathcal U)}, \Forall t \in [0,T],
\end{multline}
where
\begin{equation}\label{eq:rate-general}
\theta^* \eqdef \min \left\{ \theta_{1,p}-\theta_{0,1} \, ; \, \theta_p-\theta_{1,0} \, ; \, \theta_q-\theta_{1,0} \right\}.
\end{equation}
\end{theorem}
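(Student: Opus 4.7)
The plan is to follow the classical elliptic-projection-plus-energy-method route. First, via Lemma~\ref{lem:energy--co-energy}, the $\mathcal X$-norm error is reinterpreted as a weighted $L^2$-norm on the co-energy error $(\vector{e}_q - \vector{e}_q^d,\, e_p - e_p^d)$. I then introduce the intermediate approximants $\vector{e}_q^I := \Tens\, \calPb_q\, \Alphaq \in \H_q$ and $e_p^I := \rho^{-1}\, \mathcal P_p\, \alphap \in H_p$ coming from the weighted orthogonal projectors, and split $\vector{e}_q - \vector{e}_q^d = \vector{\epsilon}_q + \vector{\eta}_q$ with $\vector{\epsilon}_q := \vector{e}_q - \vector{e}_q^I$ and $\vector{\eta}_q := \vector{e}_q^I - \vector{e}_q^d$, and likewise for $e_p$. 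The projection part $(\vector{\epsilon}_q,\, \epsilon_p)$ measured in $\mathcal X$ is controlled by the curly versions of~\eqref{H1} and~\eqref{H3}: under~\eqref{H0} it is bounded by $C\,h^{\min(\theta_p,\theta_q)}\, \|(\Alphaq,\alphap)\|_{\mathcal Z_\kappa}$, which is absorbed in the $h^{\theta^*}$ contribution of the statement.

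Next, I subtract the discrete weak form~\eqref{eq:FV-discrete} from~\eqref{eq:FV-continuous} tested against $\H_q \times H_p$. The decisive feature is that the defining orthogonality of $\calPb_q$ and $\mathcal P_p$ in their respective metrics annihilates the time derivatives of the projection errors against discrete test functions: $\psl \partial_t \vector{\epsilon}_q, \Tens^{-1} \v_q \psr_{\L^2} = 0$ for all $\v_q \in \H_q$ and $\psl \partial_t \epsilon_p, \rho\, v_p \psr_{L^2} = 0$ for all $v_p \in H_p$. This is precisely the motivation for introducing the weighted projectors. Testing the resulting error system with $\v_q = \vector{\eta}_q,\; v_p = \eta_p$, the skew-adjoint cross terms $\psl \grad \eta_p, \vector{\eta}_q \psr$ and $\psl \vector{\eta}_q, \grad \eta_p \psr$ cancel, leaving
$$
\frac{1}{2}\,\ddt\, \xi(t)^2 \;=\; \psl \grad \epsilon_p, \vector{\eta}_q \psr_{\L^2} \;-\; \psl \vector{\epsilon}_q, \grad \eta_p \psr_{\L^2} \;+\; \psl u - u^d, \gamma_0(\eta_p) \psr_{\mathcal U, \mathcal Y},
$$
where $\xi(t) := \nol (\calPb_q \Alphaq - \Alphaq^d,\, \mathcal P_p \alphap - \alphap^d)(t) \nor_{\mathcal X}$.

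Bounding the three right-hand-side terms by Cauchy--Schwarz then produces the exponent $\theta^*$. For the first, I would bound $\nol \grad \epsilon_p \nor_{\L^2} \le C\,h^{\min(\theta_{1,p}-\theta_{0,1},\,\theta_p-\theta_{1,0})}\,\nol e_p \nor_{H^{\kappa+1}(\Omega)}$ by decomposing $e_p - e_p^I = (e_p - P_p e_p) + (P_p e_p - e_p^I)$: the first summand is handled by~\eqref{H5} followed by~\eqref{H2}, while the second, being in $H_p$, is treated by the inverse inequality~\eqref{H4} combined with~\eqref{H1}. The second term $\val \psl \vector{\epsilon}_q, \grad \eta_p \psr \var$ yields the remaining exponent $\theta_q - \theta_{1,0}$ via~\eqref{H3} and the inverse inequality~\eqref{H4} on $\eta_p \in H_p$. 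For the boundary term, continuity of the Dirichlet trace from $H^1$ into $H^{1/2}$ followed once more by~\eqref{H4} gives $\nol \gamma_0(\eta_p) \nor_{\mathcal Y} \le C\,h^{-\theta_{1,0}}\,\nol \eta_p \nor_{L^2}$, accounting for the $h^{-\theta_{1,0}}\,\nol u - u^d \nor_{\mathcal U}$ contribution. Each bound factors out an extra $\xi(t)$, so dividing by $\xi$ and integrating from $0$ to $t$ closes the argument; the $\max\{1,T\}$ in the statement simply bundles the non-integrated projection part with the integrated discrete part.

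The main difficulty is the tight use of~\eqref{H5} in the estimate for $\nol \grad \epsilon_p \nor_{\L^2}$: the straightforward route through the $H^1$-projection $P_{1,p}$ alone still forces an inverse inequality to compare $e_p^I$ to $P_{1,p} e_p$, and does not capture the Aubin--Nitsche-type improvement encoded by $\theta_{0,1}$; only the~\eqref{H5}-route genuinely produces $\theta_{1,p}-\theta_{0,1}$. Everything else reduces to standard Galerkin bookkeeping, together with the translation between the straight projectors $(P_p,\Pb_q)$ and the weighted projectors $(\mathcal P_p,\calPb_q)$ via~\eqref{eq:relations-between-proj}, and the regularity trade~\eqref{H0} between $(\vector{e}_q, e_p)$ in Sobolev norms and $(\Alphaq,\alphap)$ in $\mathcal Z_\kappa$.
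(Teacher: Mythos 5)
Your proposal is correct and follows essentially the same route as the paper: the splitting into a projection error and a discrete error via the weighted projectors $\calPb_q,\mathcal P_p$, the use of their orthogonality to kill the time derivatives of the projection errors against discrete test functions (the content of Lemma~\ref{lem:der-norm-square}), the bound on $\nol \grad\left(\rho^{-1}(\alphap-\mathcal P_p\alphap)\right)\nor_{\L^2(\Omega)}$ via \eqref{H5}--\eqref{H2} and \eqref{H4}--\eqref{H1} (Lemma~\ref{lem:third-term}), the inverse inequality for the remaining terms, and the final time integration. The only difference is cosmetic: you phrase the argument in the co-energy variables via Lemma~\ref{lem:energy--co-energy}, whereas the paper keeps the energy variables throughout.
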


\begin{proof}
For the sake of readability, two technical lemmas are proved in Appendix~\ref{Sec:Appendix}.

Remark first that from~\eqref{H0}, $\matl \Alphaq \\ \alphap \matr \in \mathcal C([0,\infty);\mathcal Z_{\kappa})$, thus estimates~\eqref{H1}--\eqref{H2}--\eqref{H3} can indeed be applied to $\Alphaq$ and $\alphap$ for all time $t \in [0,T]$.

Let us decompose
\begin{equation}\label{eq:decomposition-error-th}
\nol \matl \Alphaq \\ \alphap \matr - \matl \Alphaq^d \\ \alphap^d \matr \nor_{\mathcal X} \le 
\underbrace{\nol \matl \Alphaq \\ \alphap \matr - \matl \calPb_q & 0 \\ 0 & \mathcal P_p \matr \matl \Alphaq \\ \alphap \matr \nor_{\mathcal X}}_{\mathcal E_1} 
+ \underbrace{\nol \matl \calPb_q & 0 \\ 0 & \mathcal P_p \matr \matl \Alphaq \\ \alphap \matr - \matl \Alphaq^d \\ \alphap^d \matr \nor_{\mathcal X}}_{\mathcal E_2}.
\end{equation}

The strategy of the proof proceeds in five steps.
\begin{itemize}
\item In \textbf{step 1}, the convergence of the first term $\mathcal E_1$ on the right-hand side of~\eqref{eq:decomposition-error-th} is proved thanks to~\eqref{eq:proj-calP-in-X-into-P}.
\item In \textbf{step 2}, Lemma~\ref{lem:der-norm-square} is applied in order to get the exact value of
$$
\frac{1}{2} \frac{\dd}{\dd t} \mathcal E_2^2 = \mathcal E_2 
\frac{\dd}{\dd t}\mathcal E_2,
$$
in terms of $\L^2(\Omega)$-inner products and boundary duality bracket.
\item In \textbf{step 3}, Cauchy-Schwarz inequality and coarse bounds are used so that
$$
\mathcal E_2 \frac{\dd}{\dd t}\mathcal E_2 \le \mathcal E_3 \mathcal E_2.
$$
Dividing by $\mathcal E_2 > 0$ leads to
$$
\frac{\dd}{\dd t}\mathcal E_2 \le \mathcal E_3.
$$
\item In \textbf{step 4}, Lemma~\ref{lem:third-term} is applied in order to estimate $\mathcal E_3$.
\item Finally, in \textbf{step 5}, the inequality obtained in step 4 is integrated in time, and all the estimates are gathered to conclude.
\end{itemize}

\begin{itemize}
\item[\textbf{Step 1}]
From~\eqref{eq:proj-calP-in-X-into-P} and using the lower bound $T_-$ for $\Tens$ and the upper bound $\rho^+$ for $\rho$, one gets (with the definition of the weighted norm on $\mathcal X$)
$$
\begin{array}{rl}\nol \matl \Alphaq \\ \alphap \matr - \matl \calPb_q & 0 \\ 0 & \mathcal P_p \matr \matl \Alphaq \\ \alphap \matr \nor_{\mathcal X}
\dsp &\le \nol \matl \Alphaq \\ \alphap \matr - \matl \Tens^{-1} \Pb_q \Tens & 0 \\ 0 & \rho P_p \rho^{-1} \matr \matl \Alphaq \\ \alphap \matr \nor_{\mathcal X} \\
\dsp &= \nol \matl \Tens^{-1} \; \Tens \; \Alphaq \\ \rho \rho^{-1} \alphap \matr - \matl \Tens^{-1} \Pb_q \Tens & 0 \\ 0 & \rho P_p \rho^{-1} \matr \matl \Alphaq \\ \alphap \matr \nor_{\mathcal X} \\
\dsp &= \nol \matl \Tens^{-\frac12} \left( \Ib_q - \Pb_q \right) & 0 \\ 0 & \sqrt{\rho} \left( I_p - P_p \right) \matr \matl \Tens \; \Alphaq \\ \rho^{-1} \alphap \matr \nor_{\L^2(\Omega) \times L^2(\Omega)} \\
\dsp &\le \frac{1}{\sqrt{T_-}} \nol \Tens \; \Alphaq - \Pb_q \Tens \; \Alphaq \nor_{\L^2(\Omega)} + \sqrt{\rho^+} \nol \rho^{-1} \alphap - P_p \rho^{-1} \alphap \nor_{L^2(\Omega)}.
\end{array}
$$
Applying~\eqref{H1} with $\rho^{-1} \alphap \in H^{\kappa+1}(\Omega)$ and~\eqref{H3} with $\Tens \; \Alphaq \in \H^{\kappa+1}(\div ; \Omega)$ leads to
\begin{equation}\label{eq:first-term-th}
\mathcal E_1
\le \max \left\{ \frac{C_q}{\sqrt{T_-}} ~ h^{\theta_q}, \sqrt{\rho^+} C_p ~ h^{\theta_p} \right\} \nol \matl \Alphaq \\ \alphap \matr \nor_{\mathcal Z_{\kappa}}.
\end{equation}

\item[\textbf{Step 2}]
Applying Lemma~\ref{lem:der-norm-square} leads to
\begin{multline*}
\frac{1}{2} \frac{\dd}{\dd t} \nol \matl \calPb_q & 0 \\ 0 & \mathcal P_p \matr \matl \Alphaq \\ \alphap \matr - \matl \Alphaq^d \\ \alphap^d \matr \nor_{\mathcal X}^2 
= \psl \grad \left( \rho^{-1} \left( \alphap - \mathcal P_p \alphap \right) \right), \Tens \; \left( \calPb_q \Alphaq - \Alphaq^d \right) \psr_{\L^2(\Omega)} \\
- \psl \Tens \; \left( \Alphaq - \calPb_q \Alphaq \right), \grad \left( \rho^{-1} \left( \mathcal P_p \alphap - \alphap^d \right) \right) \psr_{\L^2(\Omega)} 
+ \psl u - u^d, \gamma_0 \left( \rho^{-1} \left( \mathcal P_p \alphap - \alphap^d \right) \right) \psr_{\mathcal U, \mathcal Y}.
\end{multline*}

\item[\textbf{Step 3}]
From Cauchy-Schwarz inequality and the continuity of the Dirichlet trace operator on $H^1(\Omega)$
\begin{multline*}
\frac{1}{2} \frac{\dd}{\dd t} \nol \matl \calPb_q & 0 \\ 0 & \mathcal P_p \matr \matl \Alphaq \\ \alphap \matr - \matl \Alphaq^d \\ \alphap^d \matr \nor_{\mathcal X}^2 
\le \nol \Tens^\frac12 \; \grad \left( \rho^{-1} \left( \alphap - \mathcal P_p \alphap \right) \right) \nor_{\L^2(\Omega)} 
\nol \Tens^\frac12 \; \left( \calPb_q \Alphaq - \Alphaq^d \right) \nor_{\L^2(\Omega)} \\
+ \nol \Tens \; \left( \Alphaq - \calPb_q \Alphaq \right) \nor_{\L^2(\Omega)}
\nol \grad \left( \rho^{-1} \left( \mathcal P_p \alphap - \alphap^d \right) \right) \nor_{\L^2(\Omega)}
+ C_D \nol u - u^d \nor_{\mathcal U} \nol \rho^{-1} \left( \mathcal P_p \alphap - \alphap^d \right) \nor_{L^2(\Omega)}  \\
+ C_D \nol u - u^d \nor_{\mathcal U} \nol \grad \left( \rho^{-1} \left( \mathcal P_p \alphap - \alphap^d \right) \right) \nor_{\L^2(\Omega)}.
\end{multline*}
But $\rho^{-1} \left( \mathcal P_p \alphap - \alphap^d \right) \in V_p$, thus $\nol \grad \left( \rho^{-1} \left( \mathcal P_p \alphap - \alphap^d \right) \right) \nor_{\L^2(\Omega)}$ can be estimated by~\eqref{H4},
$$
\nol \grad \left( \rho^{-1} \left( \mathcal P_p \alphap - \alphap^d \right) \right) \nor_{\L^2(\Omega)} \le \frac{C_{1,0}}{\sqrt{\rho_-}} ~ h^{-\theta_{1,0}} ~ \nol \rho^{-\frac{1}{2}} \left( \mathcal P_p \alphap - \alphap^d \right) \nor_{L^2(\Omega)},
$$
where we have used the lower bound $\rho_-$ for $\rho$. This leads to
\begin{multline*}
\frac{1}{2} \frac{\dd}{\dd t} \nol \matl \calPb_q & 0 \\ 0 & \mathcal P_p \matr \matl \Alphaq \\ \alphap \matr - \matl \Alphaq^d \\ \alphap^d \matr \nor_{\mathcal X}^2 
\le \nol \Tens^\frac12 \; \grad \left( \rho^{-1} \left( \alphap - \mathcal P_p \alphap \right) \right) \nor_{\L^2(\Omega)} 
\nol \Tens^\frac12 \; \left( \calPb_q \Alphaq - \Alphaq^d \right) \nor_{\L^2(\Omega)} \\
+ \frac{C_{1,0}}{\sqrt{\rho_-}} ~ h^{-\theta_{1,0}} ~ \nol \Tens \; \left( \Alphaq - \calPb_q \Alphaq \right) \nor_{\L^2(\Omega)}
\nol \rho^{-\frac{1}{2}} \left( \mathcal P_p \alphap - \alphap^d \right) \nor_{L^2(\Omega)} 
+ \frac{C_D}{\sqrt{\rho_-}} \nol u - u^d \nor_{\mathcal U} \nol \rho^{-\frac{1}{2}} \left( \mathcal P_p \alphap - \alphap^d \right) \nor_{L^2(\Omega)} \\
+ \frac{C_D C_{1,0}}{\sqrt{\rho_-}} ~ h^{-\theta_{1,0}} ~ \nol u - u^d \nor_{\mathcal U} \nol \rho^{-\frac{1}{2}} \left( \mathcal P_p \alphap - \alphap^d \right) \nor_{L^2(\Omega)}.
\end{multline*}
Gathering $\nol \Tens^\frac12 \; \left( \calPb_q \Alphaq - \Alphaq^d \right) \nor_{\L^2(\Omega)}$ and $\nol \rho^{-\frac{1}{2}} \left( \mathcal P_p \alphap - \alphap^d \right) \nor_{L^2(\Omega)}$ gives the desired $\mathcal X$-norm:
\begin{multline*}
\frac{1}{2} \frac{\dd}{\dd t} \nol \matl \calPb_q & 0 \\ 0 & \mathcal P_p \matr \matl \Alphaq \\ \alphap \matr - \matl \Alphaq^d \\ \alphap^d \matr \nor_{\mathcal X}^2 
\le \Bigg( \nol \Tens^\frac12 \; \grad \left( \rho^{-1} \left( \alphap - \mathcal P_p \alphap \right) \right) \nor_{\L^2(\Omega)} \\
+ \frac{C_{1,0}}{\sqrt{\rho_-}} ~ h^{-\theta_{1,0}} ~ \nol \Tens \; \left( \Alphaq - \calPb_q \Alphaq \right) \nor_{\L^2(\Omega)} 
+ \frac{C_D}{\sqrt{\rho_-}} \left( 1
+ C_{1,0} ~ h^{-\theta_{1,0}} \right) \nol u - u^d \nor_{\mathcal U} \Bigg)
\nol \matl \calPb_q & 0 \\ 0 & \mathcal P_p \matr \matl \Alphaq \\ \alphap \matr - \matl \Alphaq^d \\ \alphap^d \matr \nor_{\mathcal X}.
\end{multline*}
Dividing both sides by $\nol \matl \calPb_q & 0 \\ 0 & \mathcal P_p \matr \matl \Alphaq \\ \alphap \matr - \matl \Alphaq^d \\ \alphap^d \matr \nor_{\mathcal X}$, we finally get $\dsp \frac{\dd}{\dd t}\mathcal E_2 \le \mathcal E_3$, with
\begin{multline*}
\mathcal E_3 \eqdef \nol \Tens^\frac12 \; \grad \left( \rho^{-1} \left( \alphap - \mathcal P_p \alphap \right) \right) \nor_{\L^2(\Omega)} \\
+ \frac{C_{1,0}}{\sqrt{\rho_-}} ~ h^{-\theta_{1,0}} ~ \nol \Tens \; \left( \Alphaq - \calPb_q \Alphaq \right) \nor_{\L^2(\Omega)} 
+ \frac{C_D}{\sqrt{\rho_-}} \left( 1
+ C_{1,0} ~ h^{-\theta_{1,0}} ~ \right) \nol u - u^d \nor_{\mathcal U}.
\end{multline*}

\item[\textbf{Step 4}]
Using the upper bound $T^+$ for $\Tens$, we get
\begin{multline*}
\mathcal E_3 \eqdef \sqrt{T^+} \nol \grad \left( \rho^{-1} \left( \alphap - \mathcal P_p \alphap \right) \right) \nor_{\L^2(\Omega)} \\
+ \frac{C_{1,0}}{\sqrt{\rho_-}} ~ h^{-\theta_{1,0}} ~ \nol \Tens \; \left( \Alphaq - \calPb_q \Alphaq \right) \nor_{\L^2(\Omega)} 
+ \frac{C_D}{\sqrt{\rho_-}} \left( 1
+ C_{1,0} ~ h^{-\theta_{1,0}} \right) \nol u - u^d \nor_{\mathcal U}.
\end{multline*}
From Lemma~\ref{lem:third-term}, one has
\begin{multline}\label{eq:third-term}
\mathcal E_3 
\le \sqrt{T^+} \left( C_{0,1} C_{1,p} ~ h^{\theta_{1,p}-\theta_{0,1}} 
+ \frac{\rho^+ C_{1,0} C_p}{\sqrt{\rho_-}} ~ h^{\theta_p-\theta_{1,0}} \right) \nol \rho^{-1} \alphap \nor_{H^{\kappa+1}(\Omega)} \\
+ \frac{C_{1,0}}{\sqrt{\rho_-}} ~ h^{-\theta_{1,0}} ~ \nol \Tens \; \left( \Alphaq - \calPb_q \Alphaq \right) \nor_{\L^2(\Omega)} 
+ \frac{C_D}{\sqrt{\rho_-}} \left( 1
+ C_{1,0} ~ h^{-\theta_{1,0}} \right) \nol u - u^d \nor_{\mathcal U}.
\end{multline}
It remains to estimate $\nol \Tens \; \left( \Alphaq - \calPb_q \Alphaq \right) \nor_{\L^2(\Omega)}$. Thanks to the upper bound $T^+$ for $\Tens$ and the third line of~\eqref{eq:relations-between-proj}, we have
$$
\nol \Tens \; \left( \Alphaq - \calPb_q \Alphaq \right) \nor_{\L^2(\Omega)} 
\le \sqrt{T^+} \nol \Tens^{\frac{1}{2}} \; \left( \Alphaq - \Tens^{-1} \Pb_q \Tens \Alphaq \right) \nor_{\L^2(\Omega)}, 
$$
or in other words
$$
\nol \Tens \; \left( \Alphaq - \calPb_q \Alphaq \right) \nor_{\L^2(\Omega)} 
\le \sqrt{T^+} \nol \Tens^{-\frac{1}{2}} \; \left( \Tens \; \Alphaq - \Pb_q \left( \Tens \; \Alphaq \right) \right) \nor_{\L^2(\Omega)}.
$$
With the lower bound $T_-$ for $\Tens$ and~\eqref{H3} with $\v_q = \Tens \; \Alphaq$, this leads to
$$
\nol \Tens \; \left( \Alphaq - \calPb_q \Alphaq \right) \nor_{\L^2(\Omega)} 
\le \frac{\sqrt{T^+}}{\sqrt{T_-}} C_q ~ h^{\theta_q} ~ \nol \Tens \; \Alphaq \nor_{\H^{\kappa+1}(\div ; \Omega)}.
$$
By injecting the latter estimate into~\eqref{eq:third-term}, we get
\begin{multline*}
\mathcal E_3 
\le \sqrt{T^+} \left( C_{0,1} C_{1,p} ~ h^{\theta_{1,p}-\theta_{0,1}} 
+ \frac{\rho^+ C_{1,0} C_p}{\sqrt{\rho_-}} ~ h^{\theta_p-\theta_{1,0}} \right) \nol \rho^{-1} \alphap \nor_{H^{\kappa+1}(\Omega)} \\
+ \frac{\sqrt{T^+} C_{1,0} C_q}{\sqrt{T_- \rho_-}} ~ h^{\theta_q-\theta_{1,0}} ~ \nol \Tens \; \Alphaq \nor_{\H^{\kappa+1}(\div ; \Omega)} 
+ \frac{C_D}{\sqrt{\rho_-}} \left( 1
+ C_{1,0} ~ h^{-\theta_{1,0}} \right) \nol u - u^d \nor_{\mathcal U},
\end{multline*}
which gives, with a rough majoration, the existence of a constant $C_3 > 0$ such that
$$
\mathcal E_3 
\le C_3 ~ h^{\min\{ \theta_{1,p}-\theta_{0,1} \; ; \; \theta_p-\theta_{1,0} \; ; \; \theta_q-\theta_{1,0} \}} ~ \nol \matl \Alphaq \\ \alphap \matr \nor_{\mathcal Z_{\kappa}} 
+ C_3 ~ h^{-\theta_{1,0}} ~ \nol u - u^d \nor_{\mathcal U},
$$
for all $h$ small enough.

\item[\textbf{Step 5}]
By integrating $\dsp \frac{\dd}{\dd t} \mathcal E_2 \le \mathcal E_3$ between $0$ and $t$, the latter inequality gives
\begin{equation}\label{eq:second-term}
\mathcal E_2(t) 
\le \mathcal E_2(0) 
+ C_3 T ~ h^{\min\{ \theta_{1,p}-\theta_{0,1} \; ; \; \theta_p-\theta_{1,0} \; ; \; \theta_q-\theta_{1,0} \}} ~ \nol \matl \Alphaq \\ \alphap \matr \nor_{L^\infty \left([0,T];\mathcal Z_{\kappa} \right)} 
+ C_3 T ~ h^{-\theta_{1,0}} ~ \nol u - u^d \nor_{L^\infty \left([0,T];\mathcal U \right)}.
\end{equation}
Substituting~\eqref{eq:first-term-th} and~\eqref{eq:second-term} into~\eqref{eq:decomposition-error-th}, noticing that $\theta_p \ge \theta_p-\theta_{1,0}$ and $\theta_q \ge \theta_q-\theta_{1,0}$, gives the desired result for all $h$ small enough.
\end{itemize}
\end{proof}

\subsection{The Hamiltonian error}

In this subsection, the numerical analysis focuses on the Hamiltonian, main object of interest in the port-Hamiltonian framework.

The next corollary follows easily from Theorem~\ref{th:General-State}, despite it does not give the expected optimal convergence rate: twice that of the state space absolute error.
\begin{corollary}\label{cor:General-Hamiltonian}
Under the assumptions of Theorem~\ref{th:General-State}, it holds
\begin{equation}\label{eq:General-Hamiltonian}
\val \mathcal E^{\Ham}(t) \var
\le \left( \nol \matl \Alphaq \\ \alphap \matr \nor_{L^\infty([0,T];\mathcal X)} + \frac{\mathcal E^{\mathcal X}(t)}{2} \right) \mathcal E^{\mathcal X}(t), \Forall t \in [0,T].
\end{equation}
\end{corollary}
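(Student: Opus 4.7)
The key observation is that the Hamiltonian, as defined in~\eqref{eq:Hamiltonian}, is exactly half the squared $\mathcal X$-norm of the energy variables: in fact $\Ham(\Alphaq,\alphap) = \tfrac{1}{2}\psl (\Alphaq,\alphap), (\Alphaq,\alphap)\psr_{\mathcal X}$, since the inner product on $\mathcal X$ was built precisely from $\mathcal Q = \mathrm{diag}(\Tens,\rho^{-1})$. Setting $\z(t) := (\Alphaq(t),\alphap(t))^\top$ and $\z^d(t) := (\Alphaq^d(t),\alphap^d(t))^\top$, the Hamiltonian error becomes a pure difference of squared norms:
\[
\mathcal E^{\Ham}(t) = \tfrac{1}{2}\bigl(\nol \z(t)\nor_{\mathcal X}^2 - \nol \z^d(t)\nor_{\mathcal X}^2\bigr).
\]

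The plan is then to apply the standard Hilbert space identity $\|a\|^2-\|b\|^2 = 2\psl a-b,a\psr - \|a-b\|^2$ with $a=\z(t)$ and $b=\z^d(t)$, which gives
\[
\mathcal E^{\Ham}(t) = \psl \z(t)-\z^d(t),\z(t)\psr_{\mathcal X} \;-\; \tfrac{1}{2}\nol \z(t)-\z^d(t)\nor_{\mathcal X}^2.
\]
Taking absolute values, the triangle inequality followed by Cauchy--Schwarz on the first term yields
\[
\val \mathcal E^{\Ham}(t)\var \le \nol \z(t)\nor_{\mathcal X}\,\nol \z(t)-\z^d(t)\nor_{\mathcal X} + \tfrac{1}{2}\nol \z(t)-\z^d(t)\nor_{\mathcal X}^2,
\]
which factors as $\bigl(\nol \z(t)\nor_{\mathcal X} + \tfrac{1}{2}\mathcal E^{\mathcal X}(t)\bigr)\,\mathcal E^{\mathcal X}(t)$.

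To conclude, I would bound the pointwise norm $\nol \z(t)\nor_{\mathcal X}$ by the $L^\infty([0,T];\mathcal X)$-norm, which is finite under the hypotheses of Theorem~\ref{th:General-State} (since \eqref{H0} provides $\z \in \mathcal C([0,\infty);\mathcal Z_\kappa) \subset \mathcal C([0,\infty);\mathcal X)$), yielding exactly~\eqref{eq:General-Hamiltonian}. There is no real obstacle here: the argument is purely algebraic and does not invoke the PDE dynamics, the discrete scheme, or any of the Galerkin hypotheses~\eqref{H1}--\eqref{H5}. The statement of the corollary signals only that this naive approach loses a factor of $h^\kappa$ compared with the optimal rate: indeed, the leading term is linear in $\mathcal E^{\mathcal X}(t)$, whereas one expects a quadratic bound. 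Recovering that optimal quadratic behaviour is precisely the role of the compatibility (de Rham) conditions invoked in Theorem~\ref{th:General-Hamiltonian}, which will require a finer analysis rather than the blunt Cauchy--Schwarz used here.
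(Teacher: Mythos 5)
Your proof is correct and follows essentially the same route as the paper: both reduce $\mathcal E^{\Ham}$ to a difference of squared $\mathcal X$-norms, expand it algebraically (you via $\|a\|^2-\|b\|^2=2\psl a-b,a\psr-\|a-b\|^2$, the paper via $\|a\|^2-\|b\|^2=\psl a+b,a-b\psr$), and conclude with Cauchy--Schwarz and the bound of the pointwise norm by the $L^\infty([0,T];\mathcal X)$-norm, yielding the identical estimate. Your closing remarks on the non-optimality of this bound and the role of the compatibility conditions also match the paper's discussion.
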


\begin{proof}
It is straightforward that
$$
\val \mathcal E^\Ham \var = \frac12 \val \psl \matl \Alphaq \\ \alphap \matr + \matl \Alphaq^d \\ \alphap^d \matr, \matl \Alphaq \\ \alphap \matr - \matl \Alphaq^d \\ \alphap^d \matr \psr_{\mathcal X} \var
\le \frac12 \nol \matl \Alphaq \\ \alphap \matr + \matl \Alphaq^d \\ \alphap^d \matr \nor_{\mathcal X} \mathcal E^{\mathcal X}.
$$
But
$$
\nol \matl \Alphaq(t) \\ \alphap(t) \matr + \matl \Alphaq^d(t) \\ \alphap^d(t) \matr \nor_{\mathcal X} 
\le 2 \nol \matl \Alphaq \\ \alphap \matr \nor_{L^\infty([0,T];\mathcal X)} 
+ \mathcal E^{\mathcal X}(t),
\Forall t \in [0,T],
$$
which ends the proof of~\eqref{eq:General-Hamiltonian}.
\end{proof}

In the following theorem, it is proved that \emph{compatibility conditions} between $\H_q$, $H_p$ and $H_\partial$, including but not restricted to those leading to the preservation of the de Rham cohomology at the discrete level, lead to a far better result for $\mathcal E^{\Ham}$.
\begin{theorem}\label{th:General-Hamiltonian}
Under the assumptions of Theorem~\ref{th:General-State}, assume furthermore that
\begin{itemize}
\item $\psl \grad \left( \frac{v_p - \mathcal P_p v_p}{\rho} \right), \v_q^d \psr_{\L^2(\Omega)} = 0$, for all $\v_q^d \in \H_q$, $v_p \in H^1(\Omega)$;
\item $\psl \Tens \; \left( \v_q - \calPb_q \v_q \right), \grad \left( v_p^d \right) \psr_{\L^2(\Omega)} = 0$, for all $v_p^d \in H_p$, $\v_q \in \L^2(\Omega)$;
\item $\psl u - u^d, \gamma_0 \left( v_p^d \right) \psr_{L^2(\partial\Omega)} = 0$, for all $u \in L^2(\partial\Omega)$, $u^d$ approximation of $u$ in $H_\partial$, $v_p^d \in H_p$;
\item $\psl u^d, \gamma_0 \left( \frac{v_p^d - \mathcal P_p v_p}{\rho} \right) \psr_{L^2(\partial\Omega)} = 0$, for all $u^d \in H_\partial$, $v_p \in H^1(\Omega)$, $v_p^d$ approximation of $v_p$ in $H_p$.
\end{itemize}
Then
$$
\mathcal E^{\Ham}(t) - \mathcal E^{\Ham}(0) = \frac{1}{2} \left( \left( \mathcal E^{\mathcal X}(t) \right)^2 - \left( \mathcal E^{\mathcal X}(0) \right)^2 \right), \Forall t \in [0,T].
$$
\end{theorem}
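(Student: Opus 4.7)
The natural idea is to use the polarisation identity for quadratic forms: with $\mathbf a(t) := \matl \Alphaq(t) \\ \alphap(t) \matr$ and $\mathbf b(t) := \matl \Alphaq^d(t) \\ \alphap^d(t) \matr$, one obtains
\begin{equation*}
\mathcal E^\Ham(t) = \frac12 \left( \nol \mathbf a(t) \nor_{\mathcal X}^2 - \nol \mathbf b(t) \nor_{\mathcal X}^2 \right) = \frac12 \left( \mathcal E^{\mathcal X}(t) \right)^2 + \mathcal I(t), \qquad \mathcal I(t) := \psl \mathbf b(t), \mathbf a(t) - \mathbf b(t) \psr_{\mathcal X}.
\end{equation*}
Consequently, the conclusion is equivalent to the \emph{conservation} of the cross term, $\mathcal I(t) = \mathcal I(0)$ for all $t \in [0,T]$. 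I would prove this by showing $\ddt \mathcal I \equiv 0$, combining the continuous and discrete weak formulations~\eqref{eq:FV-continuous}--\eqref{eq:FV-discrete} with the four compatibility hypotheses.

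\textbf{Main computation.} Rewriting $\mathcal I$ in co-energy variables via the constitutive relations and differentiating in time yields four scalar products, naturally grouped into two pairs. The first pair, $\psl \vector{e}_q^d, \partial_t (\Alphaq - \Alphaq^d) \psr_{\L^2} + \psl e_p^d, \partial_t (\alphap - \alphap^d) \psr_{L^2}$, is handled by subtracting~\eqref{eq:FV-discrete} from~\eqref{eq:FV-continuous} with the admissible test functions $\vector{e}_q^d \in \H_q$ and $e_p^d \in H_p$; this produces a skew-symmetric couple $\psl \grad(e_p - e_p^d), \vector{e}_q^d \psr - \psl \vector{e}_q - \vector{e}_q^d, \grad(e_p^d) \psr$ together with a boundary residual $\psl u - u^d, \gamma_0(e_p^d) \psr$ that is killed by compatibility condition~(3).

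\textbf{Role of the remaining compatibility conditions.} The second pair $\psl \Alphaq - \Alphaq^d, \partial_t \vector{e}_q^d \psr_{\L^2} + \psl \alphap - \alphap^d, \partial_t e_p^d \psr_{L^2}$ still carries discrete time-derivatives. I would split $\Alphaq - \Alphaq^d = (\Alphaq - \calPb_q \Alphaq) + (\calPb_q \Alphaq - \Alphaq^d)$ and symmetrically $\alphap - \alphap^d = (\alphap - \mathcal P_p \alphap) + (\mathcal P_p \alphap - \alphap^d)$. The pure projection parts vanish against $\partial_t \vector{e}_q^d \in \H_q$ and $\partial_t e_p^d \in H_p$ by the very definition of the weighted projectors $\calPb_q$ and $\mathcal P_p$. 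The correctors $\calPb_q \Alphaq - \Alphaq^d \in \V_q$ and $\mathcal P_p \alphap - \alphap^d \in V_p$ can instead be used, after rescaling by $\Tens$ or by $1/\rho$, as admissible test functions in~\eqref{eq:FV-discrete}. This eliminates the discrete time-derivatives and leaves behind ``mismatched'' pairings of the form $\psl \grad(e_p^d), \Tens(\Alphaq - \calPb_q \Alphaq) \psr$, $\psl \vector{e}_q^d, \grad((\alphap - \mathcal P_p \alphap)/\rho) \psr$ and a boundary pairing involving $u^d$ and $\gamma_0((\mathcal P_p \alphap - \alphap^d)/\rho)$. These three quantities are precisely what the remaining compatibility conditions~(2), (1) and~(4) are crafted to annihilate. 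Summing all contributions, the antisymmetric couple and the remaining discrete boundary term cancel pairwise, giving $\ddt \mathcal I \equiv 0$; integration from $0$ to $t$ and substitution back into the polarisation identity yield the announced equality.

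\textbf{Main obstacle.} The principal technical difficulty is the bookkeeping: each projection error must be routed to exactly one compatibility condition, in a way that respects the heterogeneity and anisotropy. This is why the hypotheses are formulated with the weighted projectors $\calPb_q$ and $\mathcal P_p$ rather than the plain $\L^2$-projectors $\Pb_q$ and $P_p$, and why conditions~(1) and~(2) have the asymmetric structure (one featuring $\Tens$ on the $q$-side, the other $1/\rho$ on the $p$-side) that mirrors the two rows of~\eqref{eq:Hamilton-System}. In the homogeneous isotropic case both projectors collapse to the standard orthogonal projections, and one recovers the classical preservation-of-de-Rham-cohomology flavour of compatibility alluded to in the second remark opening Section~\ref{Sec:AnaNum}.
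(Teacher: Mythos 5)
Your proposal is correct and follows essentially the same route as the paper: the polarisation identity reduces the claim to the conservation of the cross term $\psl \mathbf b, \mathbf a - \mathbf b \psr_{\mathcal X} = \psl \mathbf a, \mathbf b \psr_{\mathcal X} - \nol \mathbf b \nor_{\mathcal X}^2$, whose time derivative is computed from the continuous and discrete weak formulations with projector-corrected test functions and annihilated term by term by the four compatibility conditions. The only (cosmetic) difference is that the paper evaluates $\frac{\dd}{\dd t}\nol \mathbf b \nor_{\mathcal X}^2$ separately via the discrete power balance of Proposition~\ref{prop:discrete-power-balance}, whereas you keep it inside the single pairing and let the discrete boundary contributions cancel within the same computation.
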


\begin{proof}
Obviously
$$
\left( \mathcal E^{\mathcal X} \right)^2 
= \nol \matl \Alphaq \\ \alphap \matr \nor_{\mathcal X}^2
	- 2 \psl \matl \Alphaq \\ \alphap \matr, \matl \Alphaq^d \\ \alphap^d \matr \psr_{\mathcal X}
	+ \nol \matl \Alphaq^d \\ \alphap^d \matr \nor_{\mathcal X}^2.
$$
Hence
\begin{equation}\label{eq:Ham-in-terms-of-others}
\mathcal E^{\Ham} 
= \frac{1}{2} \left( \mathcal E^{\mathcal X} \right)^2 
	+ \psl \matl \Alphaq \\ \alphap \matr, \matl \Alphaq^d \\ \alphap^d \matr \psr_{\mathcal X}
	- \nol \matl \Alphaq^d \\ \alphap^d \matr \nor_{\mathcal X}^2.
\end{equation}
From~\eqref{eq:FV-continuous} and~\eqref{eq:FV-discrete} together with Lemma~\ref{lem:energy--co-energy}, it is straightforward that
\begin{multline*}
\frac{\dd}{\dd t} \psl \matl \Alphaq \\ \alphap \matr, \matl \Alphaq^d \\ \alphap^d \matr \psr_{\mathcal X} 
= \psl \grad \left( \rho^{-1} \alphap - \frac{\mathcal P_p \alphap}{\rho} \right), \Tens \; \Alphaq^d \psr_{\L^2(\Omega)} \\
	+ \psl \Tens \; \calPb_q \Alphaq - \Tens \; \Alphaq, \grad \left( \frac{\alphap^d}{\rho} \right) \psr_{\L^2(\Omega)} 
	+ \psl u, \gamma_0 \left( \frac{\alphap^d}{\rho} \right) \psr_{L^2(\partial\Omega)} 
	+ \psl u^d, \gamma_0 \left( \frac{\mathcal P_p \alphap}{\rho} \right) \psr_{L^2(\partial\Omega)},
\end{multline*}
which becomes, thanks to the assumptions on the $q$- and $p$-type families (remember that $\Alphaq^d \in \V_q \eqdef \Tens^{-1} \; \H_q$ and $\alphap^d \in V_p \eqdef \rho H_p$)
$$
\frac{\dd}{\dd t} \psl \matl \Alphaq \\ \alphap \matr, \matl \Alphaq^d \\ \alphap^d \matr \psr_{\mathcal X} 
= \psl u, \gamma_0 \left( \frac{\alphap^d}{\rho} \right) \psr_{L^2(\partial\Omega)} 
+ \psl u^d, \gamma_0 \left( \frac{\mathcal P_p \alphap}{\rho} \right) \psr_{L^2(\partial\Omega)}.
$$
Since by Proposition~\ref{prop:discrete-power-balance}
$$
\frac{\dd}{\dd t} \nol \matl \Alphaq^d \\ \alphap^d \matr \nor_{\mathcal X}^2 
= 2 \psl u^d, y^d \psr_{L^2(\partial\Omega)},
$$
it can be deduced that
$$
\frac{\dd}{\dd t} \left( \psl \matl \Alphaq \\ \alphap \matr, \matl \Alphaq^d \\ \alphap^d \matr \psr_{\mathcal X}
	- \nol \matl \Alphaq^d \\ \alphap^d \matr \nor_{\mathcal X}^2 \right) 
= \psl u - u^d, \gamma_0 \left( \frac{\alphap^d}{\rho} \right) \psr_{L^2(\partial\Omega)} 
	+ \psl u^d, \gamma_0 \left( \frac{\mathcal P_p \alphap - \alphap^d}{\rho} \right) \psr_{L^2(\partial\Omega)}.
$$
Now, thanks to the assumptions involving the boundary finite element families, an integration in time from $0$ to $t$ gives
$$
\psl \matl \Alphaq \\ \alphap \matr, \matl \Alphaq^d \\ \alphap^d \matr \psr_{\mathcal X}
- \nol \matl \Alphaq^d \\ \alphap^d \matr \nor_{\mathcal X}^2
= \psl \matl {\Alphaq}_0 \\ {\alphap}_0 \matr, \matl \Alphaq^d(0) \\ \alphap^d(0) \matr \psr_{\mathcal X}
- \nol \matl \Alphaq^d(0) \\ \alphap^d(0) \matr \nor_{\mathcal X}^2,
$$
and the result follows from~\eqref{eq:Ham-in-terms-of-others} by subtracting $\mathcal E^{\Ham}(0)$ from both side.
\end{proof}

\begin{remark}
Note that the anisotropy and heterogeneity have a non-negligible influence for the validity of Theorem~\ref{th:General-Hamiltonian}, as these induce \emph{curly} projectors $\calPb_q$ and $\mathcal P_p$. These modifications of the compatibility conditions for this more general case would be hidden if the analysis was carried out with constant parameters.
\end{remark}

\subsection{Other causality: the Dirichlet boundary control}
\label{Sec:Switch}

Theorem~\ref{th:General-State} has its counterpart for the other causality, already  discussed in Section~\ref{Sec:switched-system}, where $u$ and $y$ have been switched (notation S). For the sake of space saving, since the proof is quite similar, only the result for the general framework are briefly stated below.

Assuming an existence and regularity result such as~\eqref{H0} for the case~\eqref{eq:waves-continuous}--\eqref{eq:waves-boundary-switch} (though not covered in~\cite{KurZwa15}), one can easily adapt the proof of Theorem~\ref{th:General-State}.

If
\begin{itemize}
\item $\H_q$ is $\H(\div ; \Omega)$-conforming (instead of $\L^2(\Omega)$-conforming);
\item $H_p$ is $L^2(\Omega)$-conforming (instead of $H^1(\Omega)$-conforming);
\item $H_\partial$ is $H^\frac{1}{2}(\partial\Omega)$-conforming,
\end{itemize}
satisfying
\begin{equation}\label{H1S}\tag{\ref{H1}S}
\exists C_p > 0, \; \exists \theta_p \ge 0 \; : \; \nol P_p v_p - v_p \nor_{L^2(\Omega)} \le C_p ~ h^{\theta_p} ~ \nol v_p \nor_{H^{\kappa+1}(\Omega)}, 
\Forall v_p \in H^{\kappa+1}(\Omega), \; \forall h \in (0,h^*);
\end{equation}
\begin{multline}\label{H2S}\tag{\ref{H2}S}
\exists C_{\divs,p} > 0, \; \exists \theta_{\divs,p} \ge 0 \; : \; \nol \Pb_{\divs,q} \v_q - \v_q \nor_{\H(\div ; \Omega)} \le C_{\divs,p} ~ h^{\theta_{\divs,p}} ~ \nol \v_p \nor_{\H^{\kappa+1}(\div ; \Omega)}, \\
\Forall \v_q \in \H^{\kappa+1}(\div ; \Omega), \; \forall h \in (0,h^*);
\end{multline}
where $\Pb_{\divs,q}$ is the $\H(\div ; \Omega)$-orthogonal projector from $\H(\div ; \Omega)$ onto $\V_q$;
\begin{equation}\label{H3S}\tag{\ref{H3}S}
\exists C_q > 0, \; \exists \theta_q \ge 0 \; : \; \nol \Pb_q \v_q - \v_q \nor_{\L^2(\Omega)} \le C_q ~ h^{\theta_q} ~ \nol \v_q \nor_{\H^{\kappa+1}(\div ; \Omega)}, 
\Forall \v_q \in \H^{\kappa+1}(\div ; \Omega), \; \forall h \in (0,h^*);
\end{equation}
\begin{multline}\label{H4S}\tag{\ref{H4}S}
\exists C_{\divs,0} > 0, \; \exists \theta_{\divs,0} \ge 0 \; : \; \nol \div \left( \v_q^d \right) \nor_{L^2(\Omega)} \le C_{\div\rightarrow0} ~ h^{-\theta_{\divs,0}} ~ \nol \v_q^d \nor_{\L^2(\Omega)}, \\
\Forall \v_q^d \in \H_q, \; \forall h \in (0,h^*);
\end{multline}
\begin{multline}\label{H5S}\tag{\ref{H5}S}
\exists C_{0,\divs} > 0, \; \exists \theta_{0,\divs} \ge 0 \; : \; \nol \Pb_q \v_q - \v_q \nor_{\H(\div ; \Omega)} \le C_{0,\divs} ~ h^{-\theta_{0,\divs}} ~ \nol \Pb_{\divs,q} \v_q - \v_q \nor_{\H(\div ; \Omega)}, \\
\Forall \v_q \in \H(\div ; \Omega), \; \forall h \in (0,h^*);
\end{multline}
we have the following theorem.
\begin{theorem}\label{th:General-State-Switch}
Let $\kappa>0$ be an integer and $\Omega$ be of class $\mathcal C^{\kappa+2}$. There exists a constant $C_{\mathcal X}>0$ such that for all $T>0$, all initial data $\matl {\Alphaq}_0 \\ {\alphap}_0 \matr \in \mathcal Z_{\kappa}$, all $u \in \mathcal C^2([0,\infty);\mathcal U_\kappa)$ such that $u(0) = \gamma_\perp \left( \Tens \; {\Alphaq}_0 \right)$, and all $h\in (0,h^*)$
\begin{multline*}
\widetilde{\mathcal E}^{\mathcal X}(t)
\le \nol \matl \calPb_q & 0 \\ 0 & \mathcal P_p \matr \matl {\Alphaq}_0 \\ {\alphap}_0 \matr - \matl \Alphaq^d(0) \\ \alphap^d(0) \matr \nor_{\mathcal X} 
+ \widetilde C_{\mathcal X} \max \{ 1, T \} ~ h^{\widetilde \theta^*} ~ \nol \matl \Alphaq \\ \alphap \matr \nor_{L^\infty([0,T];\mathcal Z_{\kappa})} \\
+ \widetilde C_{\mathcal X} T ~ h^{-\theta_{\divs,0}} ~ \nol u - u^d \nor_{L^\infty([0,T];\mathcal U)}, \Forall t \in [0,T],
\end{multline*}
where
$$
\widetilde \theta^* \eqdef \min \left\{ \theta_{\divs,q}-\theta_{0,\divs} \, ; \, \theta_q-\theta_{\divs,0} \, ; \, \theta_q-\theta_{\divs,0} \right\}.
$$
\end{theorem}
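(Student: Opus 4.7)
The plan is to transpose verbatim the five-step proof of Theorem~\ref{th:General-State}, exchanging the roles of $\grad$ and $\div$, of $\gamma_0$ and $\gamma_\perp$, and of the $p$- and $q$-type regularity estimates, so that the switched hypotheses~\eqref{H1S}--\eqref{H5S} take the place of~\eqref{H1}--\eqref{H5}. As a first move, I would decompose
\begin{equation*}
\widetilde{\mathcal E}^{\mathcal X}(t) \le \widetilde{\mathcal E}_1(t) + \widetilde{\mathcal E}_2(t),
\end{equation*}
with $\widetilde{\mathcal E}_1 \eqdef \nol \matl \Alphaq \\ \alphap \matr - \matl \calPb_q & 0 \\ 0 & \mathcal P_p \matr \matl \Alphaq \\ \alphap \matr \nor_{\mathcal X}$ and $\widetilde{\mathcal E}_2 \eqdef \nol \matl \calPb_q & 0 \\ 0 & \mathcal P_p \matr \matl \Alphaq \\ \alphap \matr - \matl \Alphaq^d \\ \alphap^d \matr \nor_{\mathcal X}$. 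Step~1 of the original proof transfers with no change: relations~\eqref{eq:relations-between-proj} together with the $L^2$-projection estimate~\eqref{H1S} applied to $\rho^{-1}\alphap$ and the $\L^2$-projection estimate~\eqref{H3S} applied to $\Tens\,\Alphaq$ bound $\widetilde{\mathcal E}_1$ by $O(h^{\min\{\theta_p,\theta_q\}})\;\nol (\Alphaq,\alphap) \nor_{\mathcal Z_\kappa}$.

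In Step~2, the switched weak formulation~\eqref{eq:FV-continuous-switch} and its PFEM counterpart are subtracted and tested against the projection differences; the analogue of the technical lemma used in Theorem~\ref{th:General-State} delivers $\tfrac12\tfrac{\dd}{\dd t}\widetilde{\mathcal E}_2^2$ as a sum of two bulk cross terms involving $\div$ of the $q$-errors (rather than $\grad$ of the $p$-errors) plus a single boundary duality bracket of the form
\begin{equation*}
\psl \gamma_\perp\!\bigl(\Tens(\calPb_q\Alphaq - \Alphaq^d)\bigr),\, \widetilde u - \widetilde u^d \psr_{H^{-\frac12}(\partial\Omega),\,H^{\frac12}(\partial\Omega)},
\end{equation*}
which is the transpose of the boundary pairing used previously. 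Step~3 then combines Cauchy--Schwarz, the continuity of $\gamma_\perp\colon \H(\div;\Omega)\to H^{-\frac12}(\partial\Omega)$, and the inverse inequality~\eqref{H4S} applied to the discrete element $\Tens(\calPb_q\Alphaq-\Alphaq^d)\in\H_q$. Dividing by $\widetilde{\mathcal E}_2$ yields $\tfrac{\dd}{\dd t}\widetilde{\mathcal E}_2 \le \widetilde{\mathcal E}_3$.

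Step~4 estimates $\widetilde{\mathcal E}_3$: the $L^2$-part of the $p$-projection error is taken care of by~\eqref{H1S}, while the $\H(\div;\Omega)$-norm of the $q$-projection error is recovered from its weaker $\L^2$-norm through~\eqref{H5S} and the divergence-conforming estimate~\eqref{H2S} satisfied by $\Pb_{\divs,q}$, exactly mirroring the role played by~\eqref{H2} and~\eqref{H5} in Lemma~\ref{lem:third-term}. This generates the three candidate exponents entering $\widetilde\theta^*$. Step~5 finally integrates the Gr\"onwall-type inequality $\tfrac{\dd}{\dd t}\widetilde{\mathcal E}_2 \le \widetilde{\mathcal E}_3$ on $[0,t]$, absorbs $\widetilde{\mathcal E}_2(0)$ into the initial-projection term of the target inequality, and combines with Step~1 to conclude.

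The main obstacle I anticipate is the careful bookkeeping of the boundary duality: in this causality the control $\widetilde u$ now lives in the space $H^{\frac12}(\partial\Omega)$ that played the role of the observation space in Theorem~\ref{th:General-State}, while it is paired against normal traces in $H^{-\frac12}(\partial\Omega)$; as a consequence the factor $h^{-\theta_{1,0}}$ arising from the inverse inequality in the original argument is replaced by $h^{-\theta_{\divs,0}}$. In addition, one must accept an adapted version of Conjecture~\ref{conj:regularity} valid for the Dirichlet causality, which is not covered by the well-posedness result of~\cite{KurZwa15} used to motivate~\eqref{H0}; this is explicitly acknowledged in the paragraph preceding the statement.
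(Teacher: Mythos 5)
Your transposition is exactly the adaptation the paper has in mind: it gives no separate proof of Theorem~\ref{th:General-State-Switch}, stating only that one ``can easily adapt the proof of Theorem~\ref{th:General-State}'' under the switched hypotheses~\eqref{H1S}--\eqref{H5S} and a switched analogue of~\eqref{H0}, which is precisely what you carry out (correct exchange of $\grad$/$\div$, $\gamma_0$/$\gamma_\perp$, the boundary pairing, and the inverse inequality producing $h^{-\theta_{\divs,0}}$). Your identification of the three exponents also implicitly corrects the paper's evident typo in $\widetilde\theta^*$, where $\theta_q-\theta_{\divs,0}$ is listed twice and one occurrence should involve the $p$-family.
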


\begin{remark}
A counterpart of Theorem~\ref{th:General-Hamiltonian} should also be possible to prove under suitable additional compatibility conditions.
\end{remark}

\section{Optimal orders of conforming finite elements}
\label{Sec:AnaNum-PkRTl}

The purpose of this section is to provide optimal combinations of usual finite elements minimizing the number of degrees of freedom for a given convergence rate, illustrating the abstract estimates obtained in Theorems~\ref{th:General-State} and~\ref{th:General-Hamiltonian}. The errors to be analysed when the mesh size parameter $h>0$ tends towards $0$ are those of the previous section, and will be numerically investigated in the next section.

\subsection{Mesh assumptions}

These classical assumptions in numerical analysis for usual finite elements (see \eg \cite{BofBreFor13,Gat14}) are recalled for the sake of completeness.

The mesh family $(\Tau_h)_{h\in(0,h^*)}$ of $\Omega$ will be supposed to be a collection of simplicial, regular and quasi-uniform triangularization of $\overline{\Omega}$, meaning that
\begin{enumerate}
\item
It is given by a collection of triangles or tetrahedra, denoted $K$ in the sequel.
\item
If $h_K>0$ denotes the diameter of $K$, \ie $h_K \eqdef \max_{\x,\y \in K} \val \x - \y \var$, and $d_K$ is the diameter of the inscribed circle or sphere in $K$, there exists a constant $C>0$, independent of $h$ such that
$$
\frac{h_K}{d_K} \le C, \Forall K \in \Tau_h, \Forall h\in(0,h^*).
$$
The mesh parameter is then defined as $h \eqdef \max_{K \in \Tau_h} h_K$.
\item
There exists a constant $c>0$ independent of $h$ such that $\min_{K \in \Tau_h} h_K \ge c h$ for all $h\in(0,h^*)$.
\end{enumerate}

\subsection{Lagrange element}
\label{Sec:Lagrange}

The three finite element families must be $\L^2(\Omega)$-, $H^1(\Omega)$- and $H^\frac{1}{2}(\partial\Omega)$-conforming respectively, in order to apply Theorem~\ref{th:General-State}. A first easy choice is to take the usual continuous Galerkin finite elements for families $q$ and $p$, \ie
$$
\H_q \eqdef \left\{ \v_q^d \in \left(C(\overline{\Omega})\right)^N \; \mid \; \v_q^d\Big|_K \in \left(\mathbb P_k(K)\right)^N, \; \forall K \in \Tau_h \right\},
$$
when $k\ge1$, or the piecewise constant functions when $k=0$.
$$
H_p \eqdef \left\{ v_p^d \in C(\overline{\Omega}) \; \mid \; v_p^d\Big|_K \in \mathbb P_\ell(K), \; \forall K \in \Tau_h \right\},
$$
where $\ell\ge1$.

In the above definition, $\mathbb P_j(K)$ is the Lagrange finite element of order $j$ made of all polynomials of degree less or equal to $j$ on $K$.

The space $H_p$ is known as continuous Galerkin of order $\ell$ as we impose continuity of basis functions. The space $\H_q$ is the vectorial counterpart of $H_p$, at order $k$. In the sequel, we will refer to these spaces \via $CG_\ell$ and $CG_k$ respectively.

For the discretization space at the boundary $H_\partial$, discontinuous Galerkin finite elements of order $m\ge0$, denoted $DG_m$, are chosen
$$
H_\partial \eqdef \left\{ v_\partial^d \in L^\infty(\partial{\Omega}) \; \mid \; v_\partial^d\Big|_E \in \mathbb P_m(E), \; \forall E, \text{ edges of } K \in \Tau_h \text{ located at the boundary} \right\}.
$$
They are indeed in $H^\frac{1}{2}(\partial\Omega)$ for all $m \ge 0$. Practically, it consists in taking the Dirichlet trace of discontinuous Galerkin of order $m$ on the whole domain, \ie keeping only degrees of freedom located at the boundary.

All error estimates are well-known and can be found \eg in \cite{BofBreFor13,Gat14} and references therein for the global interpolation operators. Obviously, these estimates hold true for the orthogonal projectors. Then~\eqref{H1}--\eqref{H2}--\eqref{H3}--\eqref{H4}--\eqref{H5} read
\begin{equation}\label{H1L}\tag{\ref{H1}L}
\exists C_p > 0, \quad \nol P_p v_p - v_p \nor_{L^2(\Omega)} \le C_p ~ h^{\ell+1} ~ \nol v_p \nor_{H^{\ell+1}(\Omega)},
\Forall v_p \in H^{\ell+1}(\Omega), \; \forall h \in (0,h^*);
\end{equation}
\begin{equation}\label{H2L}\tag{\ref{H2}L}
\exists C_{1,p} > 0, \quad \nol P_{1,p} v_p - v_p \nor_{H^1(\Omega)} \le C_{1,p} ~ h^{\ell} ~ \nol v_p \nor_{H^{\ell+1}(\Omega)},
\Forall v_p \in H^{\ell+1}(\Omega), \; \forall h \in (0,h^*).
\end{equation}
The following estimate requires more attention. Nevertheless, a careful analysis using~\eqref{H1L}, \cite[Proposition~1.4]{FoiTem78}, and a density argument, leads to
\begin{equation}\label{H3L}\tag{\ref{H3}L}
\exists C_q > 0, \quad \nol \Pb_q \v_q - \v_q \nor_{L^2(\Omega)} \le C_q ~ h^{k+1} ~ \nol \v_q \nor_{\H^{k+1}(\div ; \Omega)}, 
\Forall \v_q \in \H^{k+1}(\div ; \Omega), \; \forall h \in (0,h^*).
\end{equation}
\begin{equation}\label{H4L}\tag{\ref{H4}L}
\exists C_{1,0} > 0, \quad \nol \grad \left( v_p^d \right) \nor_{\L^2(\Omega)} \le C_{1,0} ~ h^{-1} ~ \nol v_p^d \nor_{L^2(\Omega)}, \Forall v_p^d \in H_p, \; \forall h \in (0,h^*),
\end{equation}
and finally
\begin{equation}\label{H5L}\tag{\ref{H5}L}
\exists C_{0,1} > 0, \quad \nol P_p v_p - v_p \nor_{H^1(\Omega)} \le C_{0,1} ~ \nol P_{1,p} v_p - v_p \nor_{H^1(\Omega)}, 
\Forall v_p \in H^1(\Omega), \; \forall h \in (0,h^*).
\end{equation}

Remark that, with the choice of $H_\partial$, a straightforward but tedious exercise, using lifting operators, Bramble-Hilbert Theorem~\cite{BofBreFor13}, continuity of trace operators and quasi-uniform hypothesis give that
\begin{equation}\label{eq:Galerkin-boundary}
\nol u - u^d \nor_{H^{-\frac{1}{2}}(\partial\Omega)} \le C_u ~ h^{m+1} ~ \nol u \nor_{H^{m-\frac{1}{2}}(\partial\Omega)}, \Forall u \in H^{m-\frac{1}{2}}(\partial\Omega),
\end{equation}
at the boundary.

The following holds true.
\begin{theorem}\label{th:Pk-Pl-State}
Let $\kappa>0$ be an integer, $T>0$, $\matl {\Alphaq}_0 \\ {\alphap}_0 \matr \in \mathcal Z_{\kappa}$, $u \in \mathcal C^2([0,\infty);H^{\kappa-\frac{1}{2}}(\partial\Omega))$, and $\matl \Alphaq^d(0) \\ \alphap^d(0) \matr$ and $u^d$ their respective continuous and discontinuous Galerkin interpolations in $\V_q \times V_p$ and $H_\partial$ given by the finite elements $\left(CG_{k}\right)^N \times CG_{\ell} \times DG_{m}$.

There exists a constant $C>0$, independent of $T>0$, $\matl {\Alphaq}_0 \\ {\alphap}_0 \matr$, and $u$, such that for all $h$ small enough and all $t \in [0,T]$
\begin{equation}\label{eq:th-General-State-Pk-Pl}
\mathcal E^{\mathcal X}(t)
\le C \max \{ 1, T \} ~ h^{\min \left\{ \ell \, ; \, k \, ; \, m \right\}} ~ \left( \nol \matl \Alphaq \\ \alphap \matr \nor_{L^\infty([0,T];\mathcal Z_{\kappa})} + \nol u \nor_{L^\infty([0,T];H^{\kappa-\frac{1}{2}}(\partial\Omega))} \right).
\end{equation}
Furthermore, the optimal order is $\kappa$, obtained with $k=\kappa$, $\ell=\kappa$ and $m=\kappa-1$.
\end{theorem}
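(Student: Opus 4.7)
The plan is to specialize Theorem~\ref{th:General-State} to the concrete families $(CG_k)^N \times CG_\ell \times DG_m$: no new inequality is established, the argument reduces to substituting the exponents read off from \eqref{H1L}--\eqref{H5L} into~\eqref{eq:rate-general} and combining with the boundary estimate~\eqref{eq:Galerkin-boundary}. Since $\Omega$ is of class $\mathcal C^{\kappa+2}$ and the mesh family is simplicial, regular and quasi-uniform, all hypotheses of Theorem~\ref{th:General-State} are in force.

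From \eqref{H1L}--\eqref{H5L} I would read $\theta_p=\ell+1$, $\theta_{1,p}=\ell$, $\theta_q=k+1$, $\theta_{1,0}=1$ and $\theta_{0,1}=0$; the last of these is the Aubin--Nitsche improvement, valid on quasi-uniform simplicial meshes. Plugging into~\eqref{eq:rate-general} yields $\theta^{*}=\min\{\ell-0,\ell+1-1,k+1-1\}=\min\{\ell,k\}$, so the interior contribution in~\eqref{eq:th-General-State} is $\mathcal O(h^{\min\{\ell,k\}})$. The boundary contribution $h^{-\theta_{1,0}}\nol u-u^d\nor_{\mathcal U}$ is controlled by~\eqref{eq:Galerkin-boundary} as $h^{-1}\cdot h^{m+1}\nol u\nor_{H^{m-\frac12}(\partial\Omega)}=h^{m}\nol u\nor_{H^{m-\frac12}(\partial\Omega)}$, with $\nol u\nor_{H^{m-\frac12}(\partial\Omega)}\le\nol u\nor_{H^{\kappa-\frac12}(\partial\Omega)}$ as soon as $m\le\kappa$. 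Finally, choosing $\matl \Alphaq^d(0) \\ \alphap^d(0) \matr$ as the Galerkin interpolant of the initial data in $\V_q\times V_p$, the initial projection term in~\eqref{eq:th-General-State} is dominated by $C h^{\min\{\ell+1,k+1\}}\nol \matl {\Alphaq}_0 \\ {\alphap}_0 \matr\nor_{\mathcal Z_\kappa}$: I convert the weighted projectors $\calPb_q,\mathcal P_p$ to plain $\L^2$/$L^2$ projectors by means of~\eqref{eq:relations-between-proj}, then apply \eqref{H1L} and \eqref{H3L} together with the triangle inequality against $\matl \Alphaq^d(0) \\ \alphap^d(0) \matr$. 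Assembling the three contributions in Theorem~\ref{th:General-State} delivers~\eqref{eq:th-General-State-Pk-Pl} with global exponent $\min\{\ell,k,m\}$.

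Optimality is then a DOF-minimization under the constraint $\min\{\ell,k,m\}\ge\kappa$: the rate cannot exceed the maximal regularity $\kappa$ furnished by~\eqref{H0}, and every increment of $\ell$ or $k$ beyond $\kappa$ adds $\mathcal O(h^{-N})$ interior unknowns without any gain in the rate, which forces $\ell=k=\kappa$ and $m$ at its smallest admissible value. The main obstacle lies in justifying that $m=\kappa-1$ (rather than the naive choice $m=\kappa$ suggested by the coarse count above) already preserves the rate $\kappa$; this requires either a sharper reading of~\eqref{eq:Galerkin-boundary} that exploits the $\tfrac12$-order duality gain between $H^{\frac12}(\partial\Omega)$ and $H^{-\frac12}(\partial\Omega)$ afforded by $DG_{\kappa-1}$ traces, or an extra cancellation in the boundary pairing of Theorem~\ref{th:General-State} that is not captured by the rough bound used there.
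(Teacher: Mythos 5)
Your derivation of the displayed estimate~\eqref{eq:th-General-State-Pk-Pl} is correct and is essentially the paper's own argument: read the exponents off~\eqref{H1L}--\eqref{H5L}, substitute into~\eqref{eq:rate-general} to get $\theta^*=\min\{\ell\,;k\}$ for the interior terms, treat the initial-data term via~\eqref{eq:relations-between-proj} together with~\eqref{H1L} and~\eqref{H3L}, and control the boundary term by~\eqref{eq:Galerkin-boundary}. Your bookkeeping of the boundary contribution is in fact more careful than the paper's: paying the factor $h^{-\theta_{1,0}}=h^{-1}$ from~\eqref{eq:th-General-State} against $\Vert u-u^d\Vert_{\mathcal U}\le C_u\,h^{m+1}\Vert u\Vert_{H^{m-\frac12}(\partial\Omega)}$ yields $h^{m}$, hence the global exponent $\min\{\ell\,;k\,;m\}$ of the display, whereas the paper's proof simply writes $\theta^*=\min\{\ell\,;k\,;m+1\}$, i.e.\ it silently drops the inverse-inequality factor in front of the boundary term.

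The obstacle you flag at the end is genuine and is \emph{not} resolved by the paper. The closing claim that $m=\kappa-1$ already delivers order $\kappa$ is consistent only with a boundary contribution of order $m+1$, not with the $h^{\min\{\ell;k;m\}}$ actually proved; the paper obtains it by the unjustified substitution above. The extra (half-)order duality gain at the boundary that you propose as a remedy is precisely what the authors conjecture elsewhere and support numerically, but do not prove. So you have not missed an argument that the paper supplies: from the bound you (and the paper) rigorously establish, the safe choice guaranteeing rate $\kappa$ is $m=\kappa$, and the assertion for $m=\kappa-1$ should be read as relying on the sharper, unproven boundary estimate.
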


\begin{proof}
Since $\kappa\ge1$, $H^{\kappa-\frac{1}{2}}(\partial\Omega) = \mathcal{U}_\kappa
 \subset H^{\frac{1}{2}}(\partial\Omega)$, and $u$ can indeed be approximated in $H_\partial$.
 
From~\eqref{H0}, the solution to~\eqref{eq:waves-continuous}-\eqref{eq:waves-boundary} belongs to $\mathcal Z_{\kappa}$ continuously in time. Recall that this means
$$
\Alphaq \in \Tens^{-1} \H^{\kappa+1}(\div ; \Omega) \eqdef \left\{ \v \in \L^2(\Omega) \: \mid \: \Tens \; \v \in \H^{\kappa}(\Omega), \: \div\left(\Tens \; \v\right) \in H^{\kappa}(\Omega) \right\},
$$
and $\alphap \in \rho H^{\kappa+1}(\Omega)$. Hence, following~\eqref{H1L}--\eqref{H2L}--\eqref{H3L}--\eqref{eq:Galerkin-boundary}, the order of the finite element families satisfies
$$
\theta_p = \ell + 1 \le \kappa + 1, \quad 
\theta_{1,p} = \ell \le \kappa, \quad 
\theta_q = k + 1 \le \kappa + 1, \quad 
\theta_u := m + 1 \le \kappa.
$$
From~\eqref{H4L}--\eqref{H5L}
$$
\theta_{1,0} = 1, \quad 
\theta_{0,1} = 0.
$$
One deduces the convergence rate of $\mathcal E^{\mathcal X}$ thanks to~\eqref{eq:th-General-State} and~\eqref{eq:rate-general}, \ie it is given by
$$
\theta^* = \min \left\{ \ell \, ; \, k \, ; \, m+1 \right\},
$$
where we have used~\eqref{H1L} and~\eqref{H3L} for the approximation of the initial data. Now, taking into account the maximal regularities given by~\eqref{H0} (and the assumed regularity on $u$) leads to the maximal rate $\min \left\{ \kappa \, ; \, \kappa \, ; \, \kappa  \right\} = \kappa$.

Finally, one gets this maximal order with the minimal number of degrees of freedom when we take $\ell = k = m+1 = \kappa$.
\end{proof}

\subsection{Other finite element families}
\label{Sec:other-choices}

Following \cite[Proposition~2.5.4.]{BofBreFor13}, estimate~\eqref{H3}, and thus Theorem~\ref{th:General-State}, hold true for many usual $\H(\div ; \Omega)$-conforming families (hence $\L^2(\Omega)$-conforming as required, or even with curl-conforming finite element), namely: Raviart-Thomas $RT_k$ (for an introduction to this important class of finite element, see \eg \cite{Gat14}), Brezzi-Douglas-Marini $BDM_k$ and discontinuous Galerkin finite elements $DG_k$.

\begin{proposition}\label{prop:other-choices}
Let $\kappa>0$ be an integer, $T>0$, $\matl {\Alphaq}_0 \\ {\alphap}_0 \matr \in \mathcal Z_{\kappa}$, $u \in \mathcal C^2([0,\infty);H^{\kappa-\frac{1}{2}}(\partial\Omega))$, and $\matl \Alphaq^d(0) \\ \alphap^d(0) \matr$ and $u^d$ their respective interpolations. The optimal rate of convergence is reached with $\V_q \times V_p \times H_\partial$ given by
$$
DG_{\kappa-1} \times CG_\kappa \times DG_{\kappa-1},
\qquad
RT_\kappa \times CG_\kappa \times DG_{\kappa-1},
\qquad
BDM_\kappa \times CG_\kappa \times DG_{\kappa-1},
$$
$$
CG_{\kappa} \times CG_\kappa \times CG_{\kappa},
\qquad
DG_{\kappa-1} \times CG_\kappa \times CG_{\kappa},
\qquad
RT_\kappa \times CG_\kappa \times CG_{\kappa},
\qquad
BDM_\kappa \times CG_\kappa \times CG_{\kappa},
$$
all of them leading to the same convergence rate $\kappa$.
\end{proposition}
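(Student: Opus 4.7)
The plan is to check that each of the seven listed triples falls within the scope of Theorem~\ref{th:General-State}, read off the rate from~\eqref{eq:rate-general}, and finally inspect local degrees of freedom. Since $V_p = CG_\kappa$ in every triple, Section~\ref{Sec:Lagrange} fixes once and for all $\theta_p = \kappa+1$, $\theta_{1,p} = \kappa$, $\theta_{1,0} = 1$, and $\theta_{0,1} = 0$ (the last via an Aubin--Nitsche duality argument on quasi-uniform simplicial meshes). It therefore suffices to verify the $q$-interpolation hypothesis~\eqref{H3} and the boundary estimate~\eqref{eq:Galerkin-boundary} for each choice of $\V_q$ and $H_\partial$, and to count local degrees of freedom.

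For the $q$-families I would invoke \cite[Proposition~2.5.4]{BofBreFor13}, which supplies the $L^2$-approximation rate of the canonical interpolant on simplicial regular meshes for $RT_\kappa$, $BDM_\kappa$, $(CG_\kappa)^N$, and $DG_{\kappa-1}$. The first three candidates carry full $\mathbb{P}_\kappa$ locally and thus yield $\theta_q = \kappa+1$; this rate transfers to the $\L^2$-orthogonal projector $\Pb_q$ by the same $L^2$-minimizing argument used to obtain~\eqref{H3L}. Consequently $\theta_q - \theta_{1,0} = \kappa$, and~\eqref{eq:rate-general} returns $\theta^* = \kappa$. For $H_\partial \in \{DG_{\kappa-1}, CG_\kappa\}$, estimate~\eqref{eq:Galerkin-boundary} yields $\Vert u - u^d \Vert_{\mathcal U} = O(h^\kappa)$ under the hypothesis $u \in \mathcal C^2([0,\infty); H^{\kappa-\frac{1}{2}}(\partial\Omega))$; inserted into the boundary term of Theorem~\ref{th:General-State} exactly as in the proof of Theorem~\ref{th:Pk-Pl-State}, this contributes an $O(h^\kappa)$ term, matching the interior one.

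The main obstacle is the case $\V_q = DG_{\kappa-1}$, for which the $L^2$-projection rate drops to $\theta_q = \kappa$ and~\eqref{eq:rate-general} would naively return only $\kappa - 1$. To recover the announced rate I would exploit the exact-sequence inclusion $\grad(CG_\kappa) \subset (DG_{\kappa-1})^N$: the term $\psl \Tens (\Alphaq - \calPb_q \Alphaq), \grad(\rho^{-1}(\mathcal P_p \alphap - \alphap^d)) \psr_{\L^2}$ appearing in step~2 of the proof of Theorem~\ref{th:General-State} vanishes in the isotropic homogeneous case by Galerkin orthogonality of $\Pb_q$ against $\H_q$, and in the variable-coefficient case reduces to a commutator-type remainder governed by derivatives of $\Tens$ and $\rho$, still controlled at full order $h^{\kappa+1}$ by~\eqref{H3} and the smoothness assumed on the parameters. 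This bypasses the $h^{-\theta_{1,0}}$ penalty and restores $\theta^* = \kappa$; the mechanism is precisely the compatibility condition of Theorem~\ref{th:General-Hamiltonian}.

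The minimal-DoF claim follows from a routine local count: $DG_{\kappa-1}$ is the cheapest $\L^2(\Omega)$-conforming candidate reaching $\theta^* = \kappa$, $RT_\kappa$ is the cheapest $\H(\div;\Omega)$-conforming one, and $DG_{\kappa-1}$ is again the cheapest boundary candidate conforming in $H^{\frac{1}{2}}(\partial\Omega)$. Each triple in the list is thus optimal in its respective conforming class, which closes the proof.
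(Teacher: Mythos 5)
The paper disposes of this proposition in a single line (``This is a direct application of Theorem~\ref{th:General-State}.''), so your proof is a genuinely different and more demanding route. In particular, you correctly notice that for $\V_q = DG_{\kappa-1}$ a naive reading of~\eqref{eq:rate-general} only returns $\kappa-1$ because of the $h^{-\theta_{1,0}}$ penalty attached to $\theta_q$, and you repair this with the exact-sequence inclusion $\grad(CG_\kappa)\subset (DG_{\kappa-1})^N$, which kills the offending term of step~2 of the proof of Theorem~\ref{th:General-State} by Galerkin orthogonality of $\calPb_q$. That observation is needed to justify the stated rate for this family, so on this point you are more careful than the paper itself.

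Two concrete problems remain. First, you place $RT_\kappa$ in the ``full $\mathbb P_\kappa$, hence $\theta_q=\kappa+1$'' group. Under the indexing convention the paper explicitly adopts (the FEniCS one, lowest order denoted $RT_1$; see the Remark closing Section~\ref{Sec:other-choices}), $RT_\kappa$ contains $(\mathbb P_{\kappa-1})^N$ in full but not $(\mathbb P_{\kappa})^N$, and its $L^2$ best-approximation order is $\kappa$, not $\kappa+1$. Hence $RT_\kappa$ sits in the same problematic class as $DG_{\kappa-1}$ and needs the same rescue --- which fortunately applies, since $\grad(CG_\kappa)\subset(\mathbb P_{\kappa-1})^N\subset RT_\kappa$ elementwise. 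Second, your treatment of the heterogeneous and anisotropic case is an assertion rather than a proof: for non-constant $\Tens$ one has $\Tens\,\grad(H_p)\not\subset\H_q$, so the compatibility identity $\psl \Tens\,(\v_q-\calPb_q\v_q),\grad(v_p^d)\psr_{\L^2(\Omega)}=0$ genuinely fails, and the ``commutator remainder'' must actually be estimated. It can be: freeze $\Tens$ at its elementwise mean $\overline{\Tens}_K$, use that $\overline{\Tens}_K\,\grad(v_p^d)|_K$ is reproduced by $\H_q$, gain a factor $h$ from $\|\Tens-\overline{\Tens}_K\|_{L^\infty(K)}\le C h_K$ when $\Tens\in W^{1,\infty}$, and pay it back with the inverse inequality~\eqref{H4}. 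But this localization is where the actual content of the variable-coefficient claim lies; invoking~\eqref{H3} alone, as you do, does not control a term tested against $\grad(v_p^d)$, whose $H^\kappa$-norm is not bounded uniformly in $h$.
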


\begin{proof}
This is a direct application of Theorem~\ref{th:General-State}.
\end{proof}

\begin{remark}
Care must be taken with the subscript of $RT$ element, which may differ from one source to another, depending on how the lowest order is denoted: either $RT_0$ or $RT_1$. In this paper, we stick to the definition given in  FEniCS~\cite{FEniCS}, the software being used for the simulations ran in Section~\ref{Sec:SimuNum}, and denote the lowest order by $RT_1$.
\end{remark}

\section{Numerical study of the convergence rate in 2D}
\label{Sec:SimuNum}

In this section, simulations are performed to illustrate our results. More precisely, we intend to verify if the convergence rates are indeed those proved in Theorem~\ref{th:Pk-Pl-State} and claimed in Section~\ref{Sec:other-choices}.

\subsection{An analytical solution}

In order to study the convergence rate, we propose to focus on a 2D toy model, isotropic and heterogeneous, for which an analytical solution is known. This choice is made to avoid the computation of a reference solution.

Let us consider $\Omega = (0,1) \times (0,1)$. The physical parameters are $\rho \equiv 1$ and $\Tens \equiv \overline{\overline{I}}$. Denoting $f(t) \eqdef 2\sin\left(\sqrt{2}t\right) + 3\cos\left(\sqrt{2}t\right)$, we define
$$
\Alphaq \eqdef f(t) \matl -\sin(x)\sin(y) \\ \cos(x)\cos(y) \matr, \qquad \alphap \eqdef \frac{\dd}{\dd t} f(t) \cos(x)\sin(y), \Forall (x,y) \in \Omega, \: t \ge 0,
$$
and
$$
u(t) \eqdef \left\{
\begin{array}{ll}
- f(t) \cos(x), & \Forall (x,y) \in (0,1)\times\{0\}, \\
- f(t) \sin(1) \sin(y), & \Forall (x,y) \in \{1\}\times(0,1), \\
f(t) \cos(x) \cos(1), & \Forall (x,y) \in (0,1)\times\{1\}, \\
0, & \Forall (x,y) \in \{0\}\times(0,1).
\end{array}\right.
$$
Then, $\matl \Alphaq \\ \alphap \matr$ is a $\mathcal C^\infty([0,\infty);\mathcal C^\infty(\Omega))$-solution to the wave equation written as a pHs~\eqref{eq:Hamilton-System}--\eqref{eq:Hamilton-System-Boundary}.

The choice of sine and cosine functions has been made to avoid exact interpolation in the polynomial finite element spaces of high order.

The Hamiltonian is easily obtained for all $t\ge0$
\begin{multline*}
\Ham\left(\Alphaq(t),\alphap(t)\right) 
= \frac{1}{8} \left( \frac{\dd}{\dd t} f(t) \right)^2 \left( 1 - (\sin(1)\cos(1))^2 \right) \\
+ \frac{1}{8} \left( f(t) \right)^2
\times \Big\{ \left( 1 + \sin(1)\cos(1) \right)^2 + \left( 1 - \sin(1)\cos(1) \right)^2 \Big\}.
\end{multline*}

\subsection{Simulations}

In this section, the following procedure is proposed: $\H_q$, $H_p$ and $H_\partial$ are varying according to many ranges of finite element families, and all combinations are tested to analyse the behavior of the convergence rate.

The simulations are performed using FEniCS~\cite{FEniCS}, with a Crank-Nicolson scheme in time $t\in(0,0.5)$~\cite{PETScTS}. The time step is chosen small enough according to $\kappa$, in order to ensure that the error is driven by the mesh size $h$ and not the time step\footnote{This is not a kind of CFL condition due to the PFEM, but a matter of \emph{space} discretization analysis. We only have access to the total error in $O(h^\kappa) + O(\Delta t^2)$. Taking a sufficiently small time step allows for the analysis of this error as a function of $h$, to be able to observe the order $\kappa$. Note in particular that Crank-Nicolson scheme is unconditionality stable and symplectic.}  $dt=10^{-5}$. These tests 
have been run on a personal computer (Intel Core I7 processor, 24GB of RAM).

\begin{table}[ht!]
\centering
\includegraphics[width=0.8\textwidth]{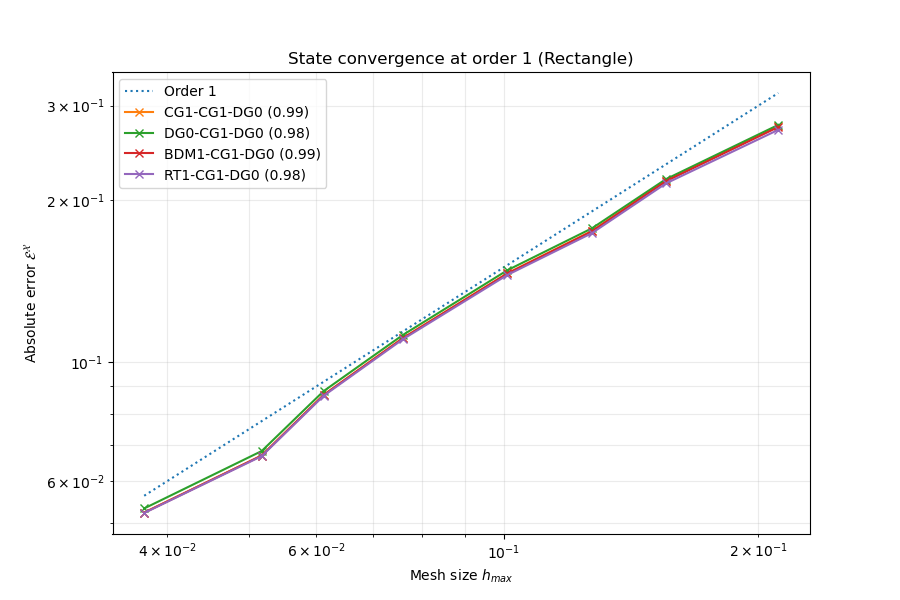}

\vspace{2em}
\begin{tabular}{c|ccccccccccccc}
\hline 
$DG_0$ & $DG_0$	& $DG_1$ & $DG_2$ & $DG_3$ & $CG_1$ & $CG_2$ & $CG_3$ & $BDM_1$ & $BDM_2$ & $BDM_3$ & $RT_1$ & $RT_2$ & $RT_3$\\
\hline
\hline
$DG_\ell$ & -0.5 	& -0.5 	& -0.5 	& -0.5 	& -0.5 	& -0.5 	& -0.5 	& -0.5 	& -0.5 	& -0.5 	& -0.5 	& -0.5 	& -0.5 \\
\hline
$CG_1$ & \textbf{0.98} 	& 0.99 	& 0.98 	& 0.98 	& \textbf{0.99} 	& 0.99 	& 0.98 	& \textbf{0.99} 	& 0.98 	& 0.98 	& \textbf{0.98} 	& 0.98 	& 0.98 \\
\hline
$CG_2$ & -0.0 	& 0.99 	& 0.99 	& 0.99 	& 1.0 	& 0.99 	& 0.99 	& 1.0 	& 0.99 	& 0.99 	& 0.99 	& 0.99 	& 0.99 \\
\hline
$CG_3$ & -0.0 	& 1.01 	& 0.99 	& 0.99 	& 0.53 	& 0.45 	& 0.99 	& 0.98 	& 0.99 	& 0.99 	& 0.98 	& 0.99 	& 0.99 \\
\hline
\end{tabular}
\begin{center}
Optimal order given by Theorem~\ref{th:Pk-Pl-State} and Proposition~\ref{prop:other-choices}: $\theta^* = 1$.
\end{center}

\vspace{2em}
\caption{Convergence at order $1$ for $\mathcal E^{\mathcal X}$ obtained for different combinations of finite element families. The first cell (the upper-left one) gives the type of {\it boundary finite element}: $DG_0$, columns correspond to the $q$-type variables, and rows to the $p$-type ones. The order in boldface are the optimal order given by Theorem~\ref{th:Pk-Pl-State} and Proposition~\ref{prop:other-choices}.}
\label{tab:EX1}
\end{table}

\begin{table}[ht!]
\centering
\includegraphics[width=0.8\textwidth]{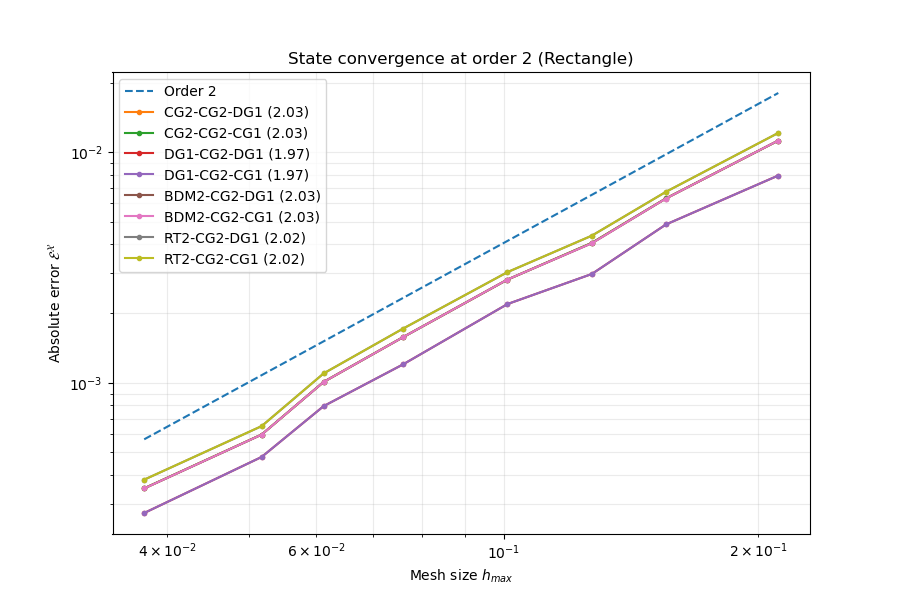}

\vspace{2em}
\begin{tabular}{c|ccccccccccccc}
\hline 
$DG_1$ & $DG_0$	& $DG_1$ & $DG_2$ & $DG_3$ & $CG_1$ & $CG_2$ & $CG_3$ & $BDM_1$ & $BDM_2$ & $BDM_3$ & $RT_1$ & $RT_2$ & $RT_3$\\
\hline
\hline
$DG_\ell$ & -0.5 	& -0.5 	& -0.5 	& -0.5 	& -0.5 	& -0.5 	& -0.5 	& -0.5 	& -0.5 	& -0.5 	& -0.5 	& -0.5 	& -0.5 \\
\hline
$CG_1$ & 0.99 	& 0.99 	& 0.97 	& 0.95 	& 1.75 	& 1.45 	& 0.96 	& 1.51 	& 1.04 	& 0.96 	& 1.01 	& 1.02 	& 0.96 \\
\hline
$CG_2$ & 0.0 	& \textbf{1.97} 	& 2.03 	& 2.03 	& 1.45 	& \textbf{2.03} 	& 2.03 	& 1.49 	& \textbf{2.03} 	& 2.03 	& 1.02 	& \textbf{2.02} 	& 2.03 \\
\hline
$CG_3$ & 0.02 	& 1.5 	& 2.03 	& 2.03 	& 0.94 	& 2.03 	& 2.03 	& 0.97 	& 2.02 	& 2.03 	& 0.57 	& 2.02 	& 2.03 \\
\hline
\end{tabular}

\vspace{2em}
\begin{tabular}{c|ccccccccccccc}
\hline 
$CG_1$ & $DG_0$	& $DG_1$ & $DG_2$ & $DG_3$ & $CG_1$ & $CG_2$ & $CG_3$ & $BDM_1$ & $BDM_2$ & $BDM_3$ & $RT_1$ & $RT_2$ & $RT_3$\\
\hline
\hline
$DG_\ell$ & -0.5 	& -0.5 	& -0.5 	& -0.5 	& -0.5 	& -0.5 	& -0.5 	& -0.5 	& -0.5 	& -0.5 	& -0.5 	& -0.5 	& -0.5 \\
\hline
$CG_1$ & 0.99 	& 0.99 	& 0.97 	& 0.95 	& 1.75 	& 1.45 	& 0.96 	& 1.51 	& 1.04 	& 0.96 	& 1.01 	& 1.02 	& 0.96 \\
\hline
$CG_2$ & 0.0 	& \textbf{1.97} 	& 2.03 	& 2.03 	& 1.45 	& \textbf{2.03} 	& 2.03 	& 1.49 	& \textbf{2.03} 	& 2.03 	& 1.02 	& \textbf{2.02} 	& 2.03 \\
\hline
$CG_3$ & 0.02 	& 1.5 	& 2.03 	& 2.03 	& 0.94 	& 2.03 	& 2.03 	& 0.97 	& 2.02 	& 2.03 	& 0.57 	& 2.02 	& 2.03 \\
\hline
\end{tabular}

\vspace{2em}
\caption{Convergence at order $2$ for $\mathcal E^{\mathcal X}$ obtained for different combinations of finite element families. The first cell (the upper-left one) of each table gives the type of {\it boundary finite element}: $DG_1$ or $CG_1$, columns correspond to the $q$-type variables, and rows to the $p$-type ones. The order in boldface are the optimal order given by Theorem~\ref{th:Pk-Pl-State} and Proposition~\ref{prop:other-choices}.}
\label{tab:EX2}
\end{table}

\begin{table}[ht!]
\centering
\includegraphics[width=0.8\textwidth]{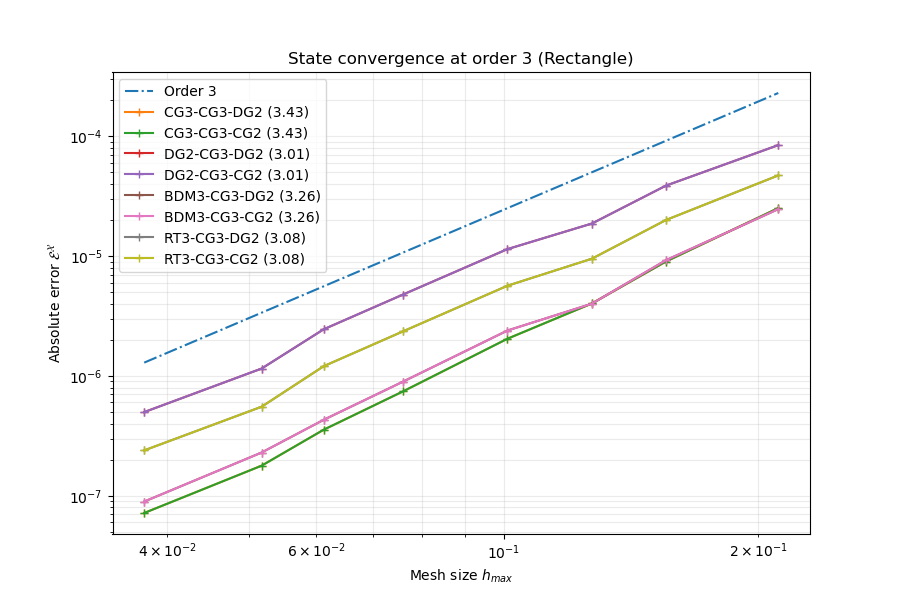}

\vspace{2em}
\begin{tabular}{c|ccccccccccccc}
\hline 
$DG_2$ & $DG_0$	& $DG_1$ & $DG_2$ & $DG_3$ & $CG_1$ & $CG_2$ & $CG_3$ & $BDM_1$ & $BDM_2$ & $BDM_3$ & $RT_1$ & $RT_2$ & $RT_3$\\
\hline
\hline
$DG_\ell$ & -0.5 	& -0.5 	& -0.5 	& -0.5 	& -0.5 	& -0.5 	& -0.5 	& -0.5 	& -0.5 	& -0.5 	& -0.5 	& -0.5 	& -0.5 \\
\hline
$CG_1$ & 0.98 	& 0.96 	& 0.96 	& 0.94 	& 1.79 	& 1.44 	& 0.95 	& 1.48 	& 1.03 	& 0.95 	& 1.01 	& 1.02 	& 0.95 \\
\hline
$CG_2$ & 0.0 	& 1.97 	& 1.93 	& 1.91 	& 1.45 	& 1.85 	& 1.87 	& 1.47 	& 1.88 	& 1.88 	& 1.02 	& 1.83 	& 1.88 \\
\hline
$CG_3$ & 0.02 	& 1.5 	& \textbf{3.01} 	& 2.97 	& 0.95 	& 1.95 	& \textbf{3.43} 	& 0.98 	& 1.93 	& \textbf{3.26} 	& 0.57 	& 1.51 	& \textbf{3.08} \\
\hline
\end{tabular}

\vspace{2em}
\begin{tabular}{c|ccccccccccccc}
\hline 
$CG_2$ & $DG_0$	& $DG_1$ & $DG_2$ & $DG_3$ & $CG_1$ & $CG_2$ & $CG_3$ & $BDM_1$ & $BDM_2$ & $BDM_3$ & $RT_1$ & $RT_2$ & $RT_3$\\
\hline
\hline
$DG_\ell$ & -0.5 	& -0.5 	& -0.5 	& -0.5 	& -0.5 	& -0.5 	& -0.5 	& -0.5 	& -0.5 	& -0.5 	& -0.5 	& -0.5 	& -0.5 \\
\hline
$CG_1$ & 0.98 	& 0.96 	& 0.96 	& 0.94 	& 1.79 	& 1.44 	& 0.95 	& 1.48 	& 1.03 	& 0.95 	& 1.01 	& 1.02 	& 0.95 \\
\hline
$CG_2$ & 0.0 	& 1.97 	& 1.93 	& 1.91 	& 1.45 	& 1.85 	& 1.87 	& 1.47 	& 1.88 	& 1.88 	& 1.02 	& 1.83 	& 1.88 \\
\hline
$CG_3$ & 0.02 	& 1.5 	& \textbf{3.01} 	& 2.97 	& 0.95 	& 1.95 	& \textbf{3.43} 	& 0.98 	& 1.93 	& \textbf{3.26} 	& 0.57 	& 1.51 	& \textbf{3.08} \\
\hline
\end{tabular}

\vspace{2em}
\caption{Convergence at order $3$ for $\mathcal E^{\mathcal X}$ obtained for different combinations of finite element families. The first cell (the upper-left one) of each table gives the type of {\it boundary finite element}: $DG_2$ or $CG_2$, columns correspond to the $q$-type variables, and rows to the $p$-type ones. The order in boldface are the optimal order given by Theorem~\ref{th:Pk-Pl-State} and Proposition~\ref{prop:other-choices}.}
\label{tab:EX3}
\end{table}

All the convergence rates are presented on Tables~\ref{tab:EX1}-\ref{tab:EX2}-\ref{tab:EX3} for the absolute error $\mathcal E^{\mathcal X}$. The associated figures show the convergence rates proven in Theorem~\ref{th:General-State} and Proposition~\ref{prop:other-choices} for the optimal choices of finite element families among all our tests.

In Theorem~\ref{th:General-State}, $H_p$ is assumed to be $H^1(\Omega)$-conforming. Looking at Tables~\ref{tab:EX1}-\ref{tab:EX2}-\ref{tab:EX3}, this seems indeed necessary to ensure convergence. In every test cases, the rate is negative when $H_p$ is given by discontinuous Galerkin finte elements, no matter the order (we gather lines for $DG_\ell$, $\ell=0, 1, 2, 3$ since they give the same results). It has to be noted that assumption~\eqref{H2} is not satisfied in these cases: Theorem~\ref{th:General-State} does not apply.


\FloatBarrier
\subsection{About the convergence rate of the Hamiltonian error \texorpdfstring{$\mathcal E^{\mathcal{\Ham}}$}{EH}}

So far, \emph{compatibility conditions} have not been taken into account between $\H_q$, $H_p$ and $H_\partial$. In other words, only conforming assumptions have been made, and optimal rates have then been deduced. However, pHs are strongly structured, and in particular, the de Rham cohomology should be respected to improve the efficiency of the PFEM. As a motivation, it is remarkable on Tables~\ref{tab:EH2}--\ref{tab:EH2-4}--\ref{tab:EH2-4-6} that $\mathcal E^{\Ham}$ does not converge at the same rate as $\mathcal E^{\mathcal X}$, as stated in Corollary~\ref{cor:General-Hamiltonian}, but at twice its order in a various number of cases. 


\begin{table}[ht!]
\centering
\vspace{2em}
\begin{tabular}{c|ccccccccccccc}
\hline 
$DG_0$ & $DG_0$	& $DG_1$ & $DG_2$ & $DG_3$ & $CG_1$ & $CG_2$ & $CG_3$ & $BDM_1$ & $BDM_2$ & $BDM_3$ & $RT_1$ & $RT_2$ & $RT_3$\\
\hline
\hline
$DG_\ell$ & -1.0 	& -1.0 	& -1.0 	& -1.0 	& -1.0 	& -1.0 	& -1.0 	& -1.0 	& -1.0 	& -1.0 	& -1.0 	& -1.0 	& -1.0 \\
\hline
$CG_1$ & 0.83 	& 0.8 	& 0.81 	& 0.8 	& 0.8 	& 0.81 	& 0.8 	& 0.81 	& 0.81 	& 0.8 	& 0.82 	& 0.81 	& 0.8 \\
\hline
$CG_2$ & -0.1 	& 0.81 	& 0.82 	& 0.81 	& 0.66 	& 0.82 	& 0.81 	& 0.81 	& 0.82 	& 0.81 	& 0.83 	& 0.82 	& 0.81 \\
\hline
$CG_3$ & -0.1 	& 0.8 	& 0.81 	& 0.81 	& 1.09 	& 0.99 	& 0.81 	& 0.82 	& 0.81 	& 0.81 	& 0.84 	& 0.81 	& 0.81 \\
\hline
\end{tabular}

\vspace{2em}
\caption{Hamiltonian convergence rates for $DG_0$ boundary finite elements. The possible optimal order given by Theorem~\ref{th:General-Hamiltonian} is $\theta^* = 2$, \ie twice the order reached for the state error convergence rate.}
\label{tab:EH2}
\end{table}
On Table~\ref{tab:EH2}, one can see that the Hamiltonian convergence rates when boundary functions are approximated by Discontinuous Galerkin finite elements of order 0, $DG_0$, never reach order $2$, \ie twice the optimal convergence rate of the state error according to Theorem~\ref{th:Pk-Pl-State}. This leads us to conclude that compatibility conditions of Theorem~\ref{th:General-Hamiltonian} are never met for the combinations of finite elements presented on Table~\ref{tab:EH2}.

On the contrary, increasing by one order the approximation for boundary terms, both for Discontinuous and Continuous Galerkin $DG_1$ and $CG_1$, leads to an order $2$ in most cases, as seen on Table~\ref{tab:EH2-4}. Those corresponding to a convergence rate of order $1$ for the state error in Table~\ref{tab:EX2}  might indeed satisfy the compatibility condition of Theorem~\ref{th:General-Hamiltonian}.
\begin{table}[ht!]
\centering
\vspace{2em}
\begin{tabular}{c|ccccccccccccc}
\hline 
$DG_1$ & $DG_0$	& $DG_1$ & $DG_2$ & $DG_3$ & $CG_1$ & $CG_2$ & $CG_3$ & $BDM_1$ & $BDM_2$ & $BDM_3$ & $RT_1$ & $RT_2$ & $RT_3$\\
\hline
\hline
$DG_\ell$ & -1.0 	& -1.0 	& -1.0 	& -1.0 	& -1.0 	& -1.0 	& -1.0 	& -1.0 	& -1.0 	& -1.0 	& -1.0 	& -1.0 	& -1.0 \\
\hline
$CG_1$ & \textbf{2.05} 	& \textbf{1.99} 	& \textbf{2.07} 	& \textbf{2.07} 	& 1.96 	& 2.08 	& \textbf{2.07} 	& 2.04 	& \textbf{2.08} 	& \textbf{2.07} 	& \textbf{2.09} 	& \textbf{2.08} 	& \textbf{2.07} \\
\hline
$CG_2$ & 0.01 	& 1.95 	& 2.02 	& 2.02 	& 1.96 	& 2.02 	& 2.02 	& 2.06 	& 2.02 	& 2.02 	& \textbf{2.04} 	& 2.02 	& 2.02 \\
\hline
$CG_3$ & 0.04 	& 1.99 	& 2.02 	& 2.02 	& \textbf{2.11} 	& 2.02 	& 2.03 	& \textbf{2.0} 	& 2.02 	& 2.02 	& 1.14 	& 2.02 	& 2.03 \\
\hline
\end{tabular}

\vspace{2em}
\begin{tabular}{c|ccccccccccccc}
\hline 
$CG_1$ & $DG_0$	& $DG_1$ & $DG_2$ & $DG_3$ & $CG_1$ & $CG_2$ & $CG_3$ & $BDM_1$ & $BDM_2$ & $BDM_3$ & $RT_1$ & $RT_2$ & $RT_3$\\
\hline
\hline
$DG_\ell$ & -1.0 	& -1.0 	& -1.0 	& -1.0 	& -1.0 	& -1.0 	& -1.0 	& -1.0 	& -1.0 	& -1.0 	& -1.0 	& -1.0 	& -1.0 \\
\hline
$CG_1$ & \textbf{2.05} 	& \textbf{1.99} 	& \textbf{2.07} 	& \textbf{2.07} 	& 1.96 	& 2.08 	& \textbf{2.07} 	& 2.04 	& \textbf{2.08} 	& \textbf{2.07} 	& \textbf{2.09} 	& \textbf{2.08} 	& \textbf{2.07} \\
\hline
$CG_2$ & 0.01 	& 1.95 	& 2.02 	& 2.02 	& 1.96 	& 2.02 	& 2.02 	& 2.06 	& 2.02 	& 2.02 	& \textbf{2.04} 	& 2.02 	& 2.02 \\
\hline
$CG_3$ & 0.04 	& 1.99 	& 2.02 	& 2.02 	& \textbf{2.11} 	& 2.02 	& 2.03 	& \textbf{2.0} 	& 2.02 	& 2.02 	& 1.14 	& 2.02 	& 2.03 \\
\hline
\end{tabular}

\vspace{2em}
\caption{Hamiltonian convergence rates for $DG1$ and $CG1$ boundary finite elements. The possible optimal order given by Theorem~\ref{th:General-Hamiltonian} is $\theta^* = 2$ or $4$, \ie twice the orders reached for the state error convergence rate. In boldface the convergence rate achieving order $2$ while state error is only of order $1$, \ie a numerical evidence that compatibility conditions must be satisfied in those cases.}
\label{tab:EH2-4}
\end{table}

Analogously, increasing again the approximation at the boundary, \ie taking $DG_2$ or $CG_2$ finite elements, allows for a Hamiltonian convergence rate reaching order 4, inviting us to conjecture that compatibility conditions of Theorem~\ref{th:General-Hamiltonian} are met for boldface convergence rate of Table~\ref{tab:EH2-4-6}. Remark that we do not have a sufficiently small time step to be able to numerically observe order 6, if any. 
\begin{table}[ht!]
\centering
\vspace{2em}
\begin{tabular}{c|ccccccccccccc}
\hline 
$DG_2$ & $DG_0$	& $DG_1$ & $DG_2$ & $DG_3$ & $CG_1$ & $CG_2$ & $CG_3$ & $BDM_1$ & $BDM_2$ & $BDM_3$ & $RT_1$ & $RT_2$ & $RT_3$\\
\hline
\hline
$DG_\ell$ & -1.0 	& -1.0 	& -1.0 	& -1.0 	& -1.0 	& -1.0 	& -1.0 	& -1.0 	& -1.0 	& -1.0 	& -1.0 	& -1.0 	& -1.0 \\
\hline
$CG_1$ & \textbf{1.99} 	& \textbf{1.95} 	& \textbf{1.93} 	& \textbf{1.93} 	& 1.95 	& 1.93 	& \textbf{1.93} 	& 1.96 	& \textbf{1.91} 	& \textbf{1.93} 	& \textbf{2.0} 	& \textbf{1.91} 	& \textbf{1.93} \\
\hline
$CG_2$ & 0.01 	& 1.95 	& 1.88 	& \textbf{3.94} 	& 1.95 	& 1.98 	& \textbf{3.92} 	& 1.95 	& 1.77 	& \textbf{3.91} 	& \textbf{2.04} 	& 1.94 	& \textbf{3.93} \\
\hline
$CG_3$ & 0.04 	& 1.95 	& --- 	& 4.12 	& \textbf{1.95} 	& \textbf{4.04} 	& --- 	& \textbf{1.95} 	& \textbf{4.11} 	& --- 	& 1.14 	& 1.94 	& --- \\
\hline
\end{tabular}

\vspace{2em}
\begin{tabular}{c|ccccccccccccc}
\hline 
$CG_2$ & $DG_0$	& $DG_1$ & $DG_2$ & $DG_3$ & $CG_1$ & $CG_2$ & $CG_3$ & $BDM_1$ & $BDM_2$ & $BDM_3$ & $RT_1$ & $RT_2$ & $RT_3$\\
\hline
\hline
$DG_\ell$ & -1.0 	& -1.0 	& -1.0 	& -1.0 	& -1.0 	& -1.0 	& -1.0 	& -1.0 	& -1.0 	& -1.0 	& -1.0 	& -1.0 	& -1.0 \\
\hline
$CG_1$ & \textbf{1.99} 	& \textbf{1.95} 	& \textbf{1.93} 	& \textbf{1.93} 	& 1.95 	& 1.93 	& \textbf{1.93} 	& 1.96 	& \textbf{1.91} 	& \textbf{1.93} 	& \textbf{2.0} 	& \textbf{1.91} 	& \textbf{1.93} \\
\hline
$CG_2$ & 0.01 	& 1.95 	& 1.88 	& \textbf{3.94} 	& 1.95 	& 1.98 	& \textbf{3.92} 	& 1.95 	& 1.77 	& \textbf{3.91} 	& \textbf{2.04} 	& 1.94 	& \textbf{3.93} \\
\hline
$CG_3$ & 0.04 	& 1.95 	& --- 	& 4.12 	& \textbf{1.95} 	& \textbf{4.04} 	& --- 	& \textbf{1.95} 	& \textbf{4.11} 	& --- 	& 1.14 	& 1.94 	& --- \\
\hline
\end{tabular}

\vspace{2em}
\caption{Hamiltonian convergence rates for $DG2$ and $CG2$ boundary finite elements. The possible optimal order given by Theorem~\ref{th:General-Hamiltonian} is $\theta^* = 2, 4$ or $6$, \ie twice the orders reached for the state error convergence rate. In boldface the convergence rate achieving order $2$ (resp. $4$) while state error is only of order $1$ (resp. $2$), \ie a numerical evidence that compatibility conditions must be satisfied in those cases. Order $6$ can not be reached because of the time scheme.}
\label{tab:EH2-4-6}
\end{table}

To conclude, as pHs deal with a Hamiltonian functional, which can be seen as the primary object, the \emph{structure-preserving discretization} should mean that $\Ham$ has to be accurately discretized for both the power balance (PFEM) {\em and} the value of $\Ham$ in $\R$ (compatibility conditions). Together, the PFEM and the compatibility conditions seem to achieve this, making use of the Finite Element Method only. They give the maximal precision with the minimal number of degrees of freedom. Furthermore, the number of degrees of freedom at the boundary being very low in comparison to those of $q$- and $p$-type, it clearly appears that the choice of boundary finite elements proves crucial to reach the expected convergence rates with respect to the finite elements chosen within the domain $\Omega$ (as an example, compare the rate obtained with the discretization $CG_2 \times CG_3$ for $\Alphaq$ and $\alphap$ on Table~\ref{tab:EH2-4} with the same on Table~\ref{tab:EH2-4-6}).

\FloatBarrier
\subsection{More test cases}
\label{Sec:More-test-cases}

\subsubsection{A non-convex case: the \texorpdfstring{$L$}{L}-shaped domain}

In this section, we test a non-convex case, with an analytical solution given as in the previous tests (boundary control is again given by the restriction of the analytical solution to $\partial\Omega$), with a time step $dt=10^{-3}$. One can appreciate on Figure~\ref{fig:L-shape} how our results remain valid even for this more complicated domain.
\begin{figure}[ht!]
\centering
\includegraphics[width=0.495\textwidth]{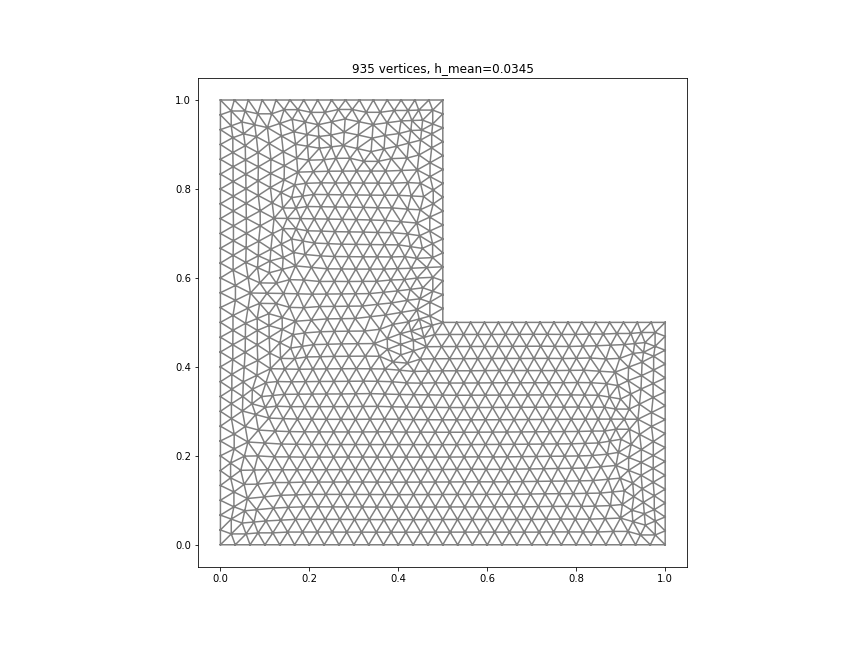}
\hfill
\includegraphics[width=0.495\textwidth]{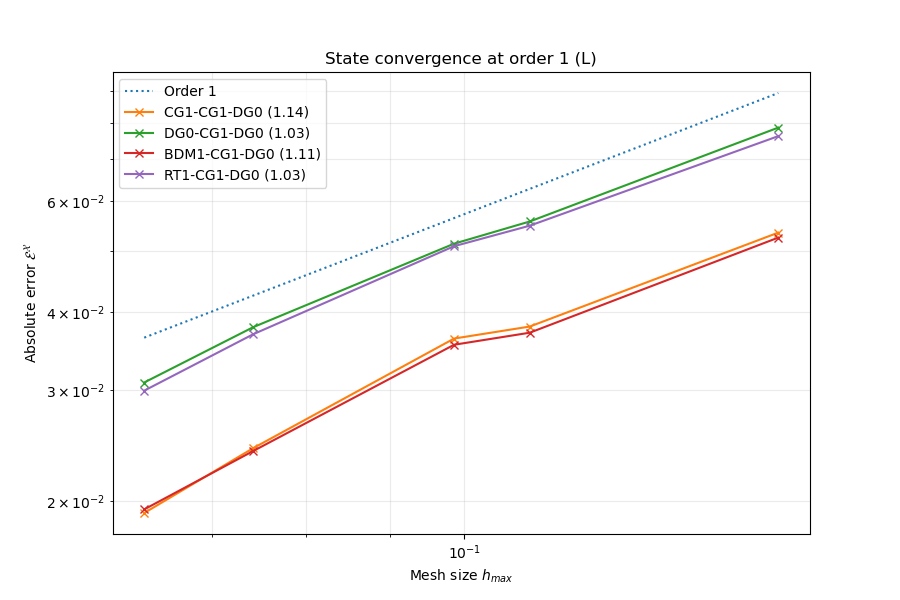} \\
\includegraphics[width=0.495\textwidth]{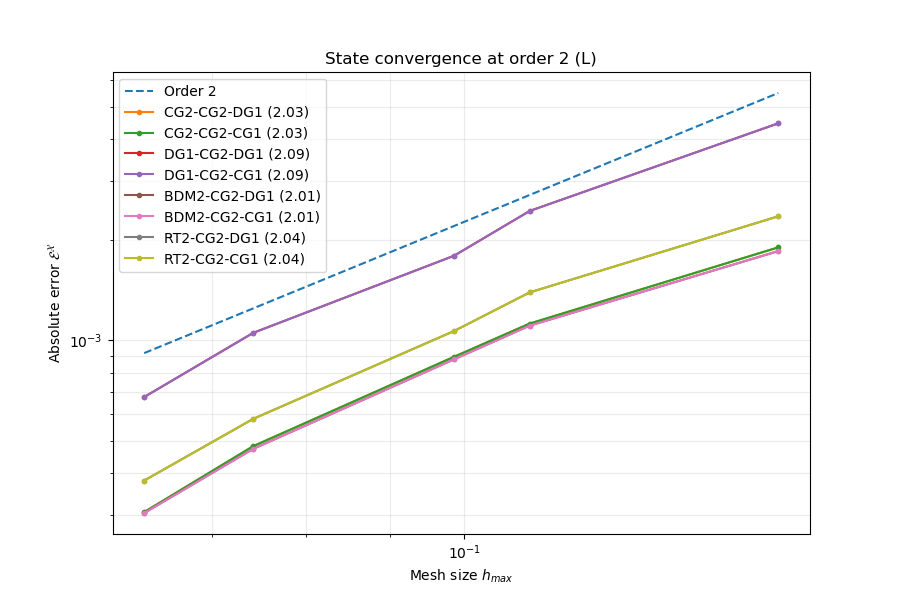}
\hfill
\includegraphics[width=0.495\textwidth]{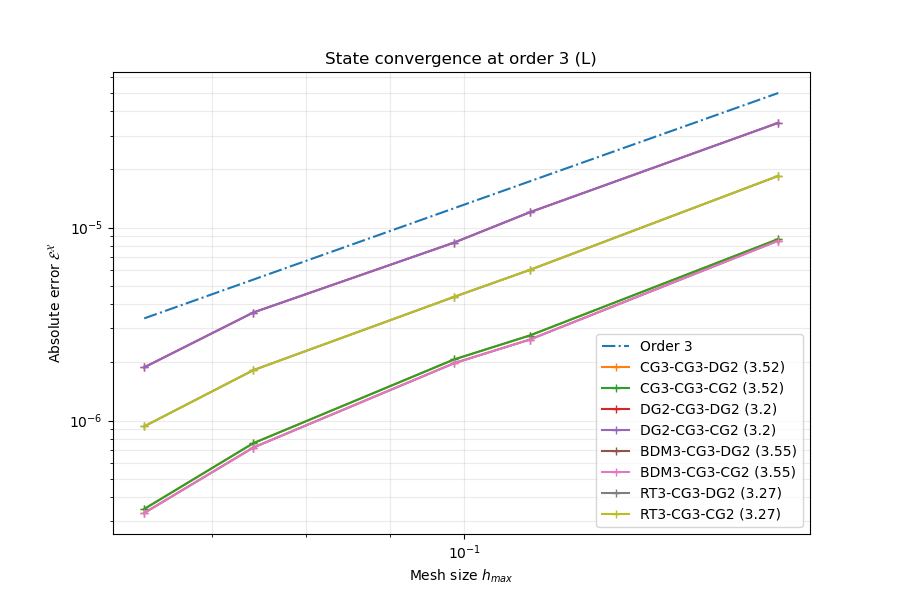}
\caption{$L$-shape: a non-convex domain.}\label{fig:L-shape}
\end{figure}

\FloatBarrier
\subsubsection{An anisotropic case}

As Theorems~\ref{th:General-State} and~\ref{th:General-Hamiltonian} are given for general heterogeneous anisotropic wave equations, a test case for constant anisotropy on the square $\Omega = (0,1) \times (0,1)$ is consider, with a time step $dt = 10^{-3}$. Let
$$
\Tens \equiv \matl 5 & 2 \\ 2 & 3 \matr, \qquad \rho \equiv 1,
$$
An analytical solution is then given by: $w(t,x) = \cos(3t-x+2y)$. More precisely
$$
\Alphaq = \matl -1 \\ 2 \matr \sin(3t-x+2y), \qquad \alpha_p = 3\sin(3t-x+2y).
$$
Convergence rates for optimal combinations of finite elements are given on Figure~\ref{fig:An}.

\begin{figure}[ht!]
\centering
\includegraphics[width=0.495\textwidth]{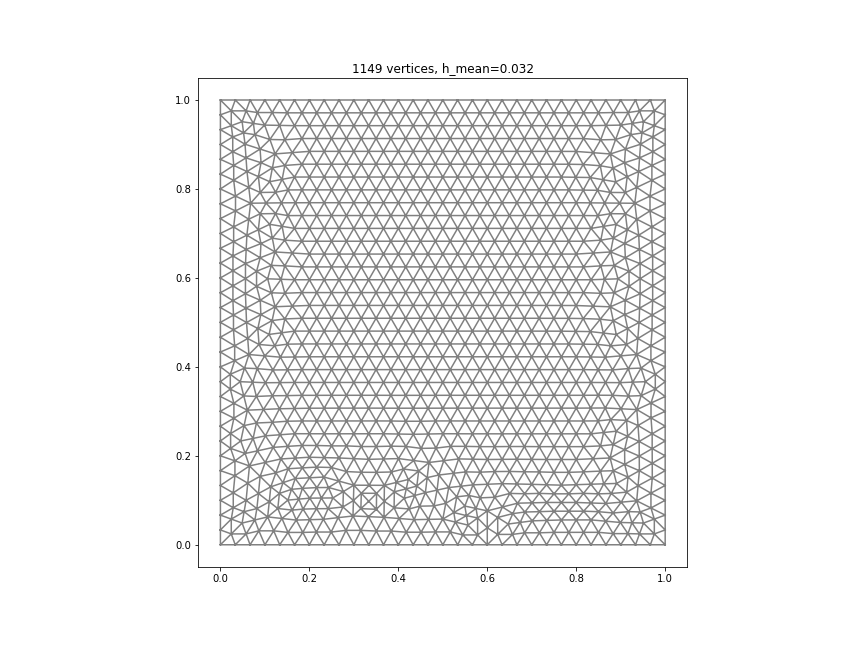}
\hfill
\includegraphics[width=0.495\textwidth]{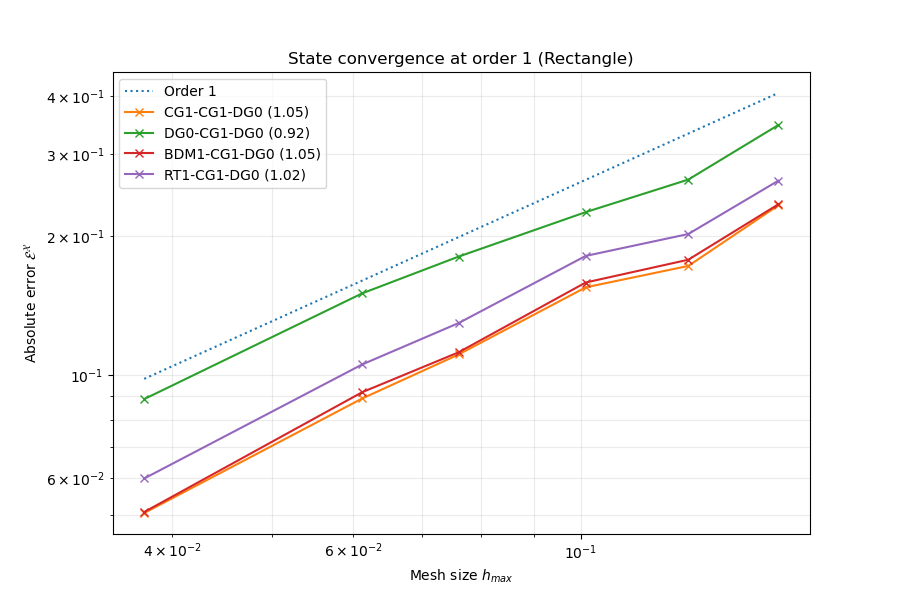} \\
\includegraphics[width=0.495\textwidth]{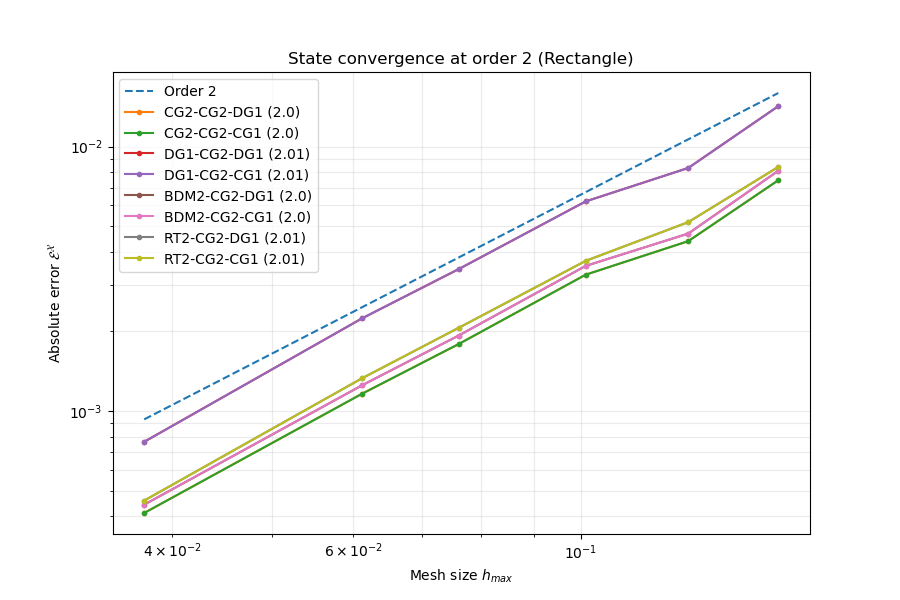}
\hfill
\includegraphics[width=0.495\textwidth]{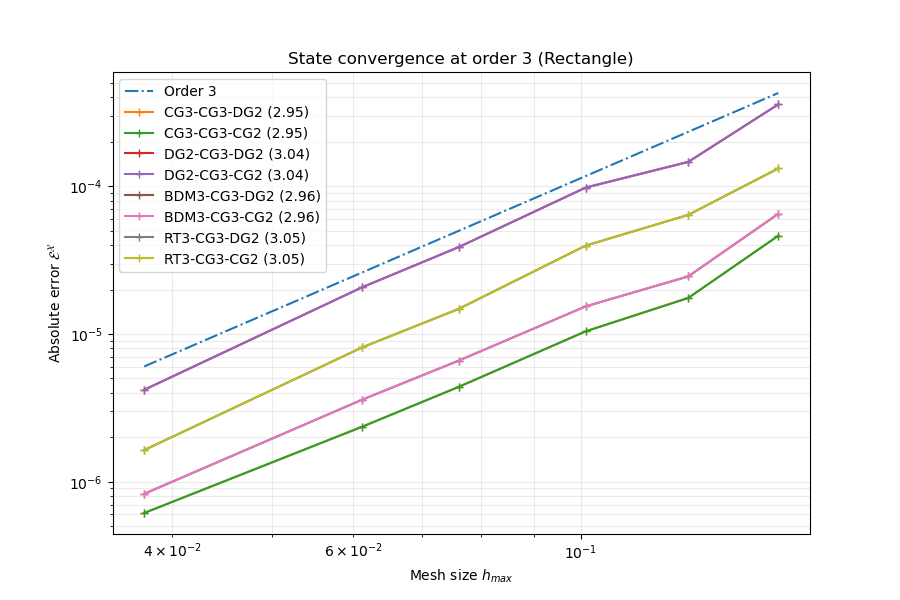}
\caption{An anisotropic case.}\label{fig:An}
\end{figure}

\FloatBarrier
\subsection{Absorbing boundary condition}
\label{Sec:Simu-damping}

It can be difficult to deal with dissipative boundary conditions 
for PDEs. 
For instance in the port-Hamiltonian formalism, at the continuous level, the dissipation does not appear as a positive bounded operator $\mathcal R$ in the dynamics in general. This means that the dynamics of the port-Hamiltonian systems is not necessarily governed by an operator of the form $(\mathcal J - \mathcal R) \mathcal Q$ (even with a lot of work involving lifting operators to fit this formulation, $\mathcal R$ will \emph{not} be a bounded operator). In other words, the dissipativity is \emph{hidden} in the domain of the unbounded operator $\mathcal J$. However, it is expected to recover a finite-dimensional pHs driven by a matrix of the form $\left( \mathcal J^d - \mathcal R^d \right) \mathcal Q^d$ at the discrete level.

PFEM provides a very easy and convenient way to describe this positive matrix $\mathcal R^d$ when dealing with admittance or impedance boundary conditions, \ie absorbing boundary condition in the PDE terminology. The strategy relies on the use of a suitable output feedback law on the \emph{finite-dimensional} pHs obtained by PFEM in the previous sections (see \cite{SerMatHai19a,SerMatHai19d} where this original strategy has been first proposed and explained in full details). Furthermore, this construction gives a well-understood structure to the matrix $\mathcal R^d$, which turns out to be of low rank: at most the dimension of $V_\partial$ as expected, since the damping is applied at the boundary only.

\subsubsection{Discretization}
More precisely, the following boundary condition is considered, instead of~\eqref{eq:waves-boundary}, for the admittance boundary condition
\begin{equation}\label{eq:waves-boundary-Y}\tag{\ref{eq:waves-boundary}Y}
\left\{\begin{array}{ll}
v(t,\x) = Y(\x) \partial_t w(t,\x) + \left( \Tens(\x) \; \grad (w(t,\x)) \right)^\top \; \n(\x),& \Forall \x \in \partial\Omega, t \ge 0,\\
y(t,\x) = \partial_t w(t,\x),& \Forall \x \in \partial\Omega, t \ge 0.
\end{array}\right.
\end{equation}
and the following one instead of~\eqref{eq:waves-boundary}, for the impedance boundary condition
\begin{equation}\label{eq:waves-boundary-Z}\tag{\ref{eq:waves-boundary}Z}
\left\{\begin{array}{ll}
\widetilde v(t,\x) = \partial_t w(t,\x) + Z(\x) \left( \Tens(\x) \; \grad (w(t,\x)) \right)^\top \; \n(\x),& \Forall \x \in \partial\Omega, t \ge 0,\\
\widetilde y(t,\x) = \left( \Tens(\x) \; \grad (w(t,\x)) \right)^\top \; \n(\x),& \Forall \x \in \partial\Omega, t \ge 0.
\end{array}\right.
\end{equation}
where both the admittance $Y$ and the impedance $Z$ are positive and belong to $L^\infty(\partial\Omega)$ and $v$ or $\widetilde v$ are the external inputs.

\begin{remark}
It is clear that~\eqref{eq:waves-boundary-Y} and~\eqref{eq:waves-boundary-Z} generalize~\eqref{eq:waves-boundary} and~\eqref{eq:waves-boundary-switch} respectively. Nevertheless, as mentioned above, PFEM does not apply straightforwardly in these more general cases (think about the use of Green's formula~\eqref{eq:Green} at the beginning of the strategy). The proposed alternative to construct the finite-dimensional dissipative pHs using an output feedback laws seems an elegant way to achieve our goal.
\end{remark}
It is easy to write the following relations: between $u$, $y$ and $v$ (the new control) from~\eqref{eq:waves-boundary} and~\eqref{eq:waves-boundary-Y}
$$
u(t,\x) = v(t,\x) - Y(\x) y(t,\x), \Forall \x \in \partial\Omega, t \ge 0,
$$
or between $\widetilde u$, $\widetilde y$ and $\widetilde v$ (the new control) from~\eqref{eq:waves-boundary-switch} and~\eqref{eq:waves-boundary-Z}
\begin{equation}\label{eq:feedback-switch}
\widetilde u(t,\x) = \widetilde v(t,\x) - Z(\x) \widetilde y(t,\x), \Forall \x \in \partial\Omega, t \ge 0.
\end{equation}
Using a weak formulation, these equalities read in matrix form
$$
M_\partial \underline{u} (t) = M_\partial \underline{v}(t) - \psl Y \psr \underline{y}(t), \Forall t \ge 0,
$$
or
$$
M_\partial \underline{\widetilde u} (t) = M_\partial \underline{\widetilde v}(t) - \psl Z \psr \underline{\widetilde y}(t), \Forall t \ge 0,
$$
respectively, where
$$
\psl Y \psr \eqdef \int_{\partial\Omega} Y(\s) \Psi(\s) \; (\Psi(\s))^\top \dd \s,
$$
or
$$
\psl Z \psr \eqdef \int_{\partial\Omega} Z(\s) \Psi(\s) \; (\Psi(\s))^\top \dd \s.
$$
As presented in~\cite[Remark 2.]{SerMatHai19d}, this procedure indeed gives rise to finite-dimensional Dirac structures, introducing extra \emph{resistive} ports, and leading to a pHDAE.

\subsubsection{Simulation results}

As a worked-out example, let us consider a fully heterogeneous and anisotropic case, with boundary control and boundary damping. The aim is to illustrate how the structure-preserving scheme can be appreciate on the Hamiltonian behaviour and the different kind of energies present in the system (potential, kinetic, supplied and damped).

Let us consider each part of the energy and their sum. The preservation of the physical meaning supposes that the exchanges of energy (\eg potential to kinetic and \textit{vice-versa}, boundary-supplied/taken energy to the system and damped into internal energy) must result in the preservation of the first principle of thermodynamics: the sum of all energies must be constant over time. More precisely, let us define the potential energy
$$
E_{\rm Pot}(t) \eqdef \frac{1}{2} \int_\Omega \left( \Alphaq(t,\x) \right)^\top \; \Tens(x,y) \; \Alphaq(t,\x) \; \dd \x,
$$
the kinetic energy
$$
E_{\rm Kin}(t) \eqdef \frac{1}{2} \int_\Omega \frac{\left( \alphaq(t,\x) \right)^2}{\rho(\x)} \; \dd \x,
$$
the boundary-supplied energy
$$
S(t) \eqdef \int_0^t \; \psl u(t,\s), y(t,\s) \psr_{\mathcal U, \mathcal Y} \; \dd t,
$$
and the damped energy
$$
D(t) \eqdef \int_0^t \; \psl Y(t,\s) \;y(t,\s), y(t,\s) \psr_{\mathcal U, \mathcal Y} \; \dd t.
$$
The total energy present in the system over time is then decomposed as
$$
E(t) = E_{\rm Pot}(t) + E_{\rm Kin}(t) + S(t) + D(t) = \Ham(t) + S(t) + D(t),
$$
and the first principle of thermodynamics implies that $E(t) \equiv E(0)$ for all $t\ge0$.

For our example on $\Omega \eqdef \left\{ \x \in \R^2 \; \mid \nol \x \nor < 1 \right\}$, the non-uniform anisotropic elasticity tensor $\Tens$ and the non-uniform heterogeneous mass density $\rho$ are taken as follows
$$
\Tens(x,y) \eqdef \matl 2 & 0.2(1+x)(1-x) \\ 0.2 (1+x) (1-x) & 1 \matr, 
\qquad 
\rho(x,y) \eqdef 2 + 0.25 (1+x) (1-x), \Forall (x,y) \in \Omega.
$$
The boundary control is chosen as
$$
u(t,x,y) \eqdef 
\left\lbrace\begin{array}{ll}
5 x \sin(t) \sin(1-t), & \Forall t < 1, (x,y) \in \partial\Omega, \\
0, & \Forall t > 1, (x,y) \in \partial\Omega,
\end{array}\right.
$$
while the admittance is defined by
$$
Y(t,x,y) \eqdef 
\left\lbrace\begin{array}{ll}
2.5 x \sin(t) \sin\left(\frac{t-1.5}{1.5}\right), & \Forall t > 1.5, (x,y) \in \partial\Omega, \\
0, & \Forall t < 1.5, (x,y) \in \partial\Omega,
\end{array}\right.
$$

The simulation is performed on the time interval $(0,3)$ with a time step $dt = 10^{-4}$. The spatial discretization is $RT_1 \times CG_1 \times CG_1$. The time solver is Assimulo (IDA SUNDIALS)~\cite{Andersson2015}. The integration in time to compute $S$ and $D$ are done using the midpoint rule. We can appreciate on Figure~\ref{fig:HetAnDamp} how the total energy $E$ remains constant over time, as physically expected, thanks to the PFEM.

\begin{figure}[ht!]
\centering
\includegraphics[width=0.8\textwidth]{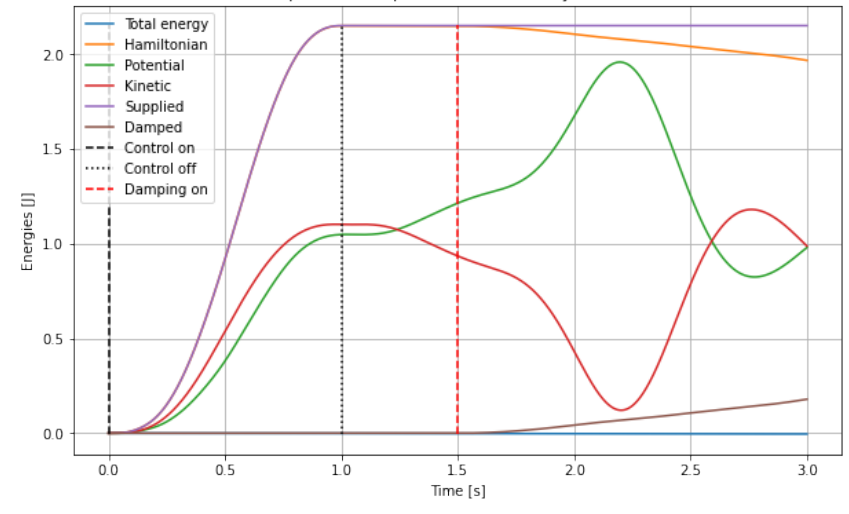}
\caption{Time evolution of the energies present in the system ($RT_1 \times CG_1 \times CG_1$ discretization). The number of degrees of freedom is: $2055+714+84=2853$.}\label{fig:HetAnDamp}
\end{figure}

\FloatBarrier
\section{Conclusion and perspectives}
\label{Sec:Conclusion}

In this work, the numerical analysis of the Partitioned Finite Element Method applied to the anisotropic and heterogeneous boundary-controlled-and-observed $N$-dimensional wave equation has been carried out. This recent structure-preserving method~\cite{CarMatLef18, CarMatLef19} allows the direct construction of a finite-dimensional port-Hamiltonian system, the underlying Dirac structure of which mimicks the infinite-dimensional Stokes-Dirac structure. This property allows a very accurate discretization of the power balance satisfied by the system. Furthermore, it has been shown that under compatibility conditions resembling those allowing for discrete de Rham complexes, the discrete Hamiltonian converges very fastly toward the continuous one, strengthening the interest of the PFEM, since the versatility of port-Hamiltonian systems aims precisely at modelling the exchange of energies between sub-systems. As an illustration of our main theorems, we have performed 2D simulations on a case where an analytical solution is known, with a non-homogeneous boundary condition: the boundary control, for both convex and non-convex domain. A wide range of usual finite element have been tested. Moreover, impedance-like boundary damping have been carried out to illustrate the structure-preserving property of the PFEM~\cite{SerMatHai19a,SerMatHai19d}.

Several questions remain open, the first one being the case of mixed boundary conditions. Two approaches have been proposed in~\cite{BruCarHaiKot20}: a domain decomposition followed by a gyrator interconnection between the two sub-systems, and the use of Lagrange multipliers. The former has the great advantage of remaining an ODE, while the latter transforms into a Differential Algebraic Equation (DAE). The numerical analysis of both alternatives will require deeper investigation.

A second interesting question is the problem of symplectic integration of pHDAE, which naturally arises in various situations such as the aforementioned mixed boundary conditions, or for the heat equation~\cite{SerMatHai19b,SerMatHai19c}. A scheme has been recently proposed in~\cite{MorMeh19} for this purpose.

Finally, in assumption~\eqref{H0} allowing for regular solutions, $\mathcal X_\kappa$ is assumed to be a Hilbert space. It is clear that this requires some regularity assumptions on the physical parameters. 
An interesting future work would be to design a structure-preserving scheme when the physical parameters $\rho$ and $\Tens$ are no more regular, \eg using mollifying methods: refining the constants in the estimates as function of those parameters, as done in this work, seems to be a first step in order to tackle this problem.


\bibliographystyle{siam} 
\bibliography{IJNAM}

\appendix

\section{Technical lemmas}
\label{Sec:Appendix}

The following lemma gives an \emph{upper} bound for $\theta_{1,0}$ in~\eqref{H5}, namely $\theta_{0,1}$ from~\eqref{H4}, although not optimal for general geometry (\eg for convex domains).

\begin{lemma}\label{lem:inverse-inequality}
If~\eqref{H4} holds true, then there exists a constant $C_{0,1} > 0$ such that
$$
\nol P_p v_p - v_p \nor_{H^1(\Omega)} \le C_{0,1} ~ h^{-\theta_{1,0}} ~ \nol P_{1,p} v_p - v_p \nor_{H^1(\Omega)}, \Forall v_p \in H^1(\Omega),
$$
\ie such that~\eqref{H5} holds with $\theta_{0,1} = \theta_{1,0}$.
\end{lemma}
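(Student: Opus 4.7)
The plan is to use a triangle inequality splitting through the $H^1$-projection $P_{1,p} v_p$, which already lies in $H_p$, and then exploit that the difference $P_p v_p - P_{1,p} v_p$ is itself a finite-element function, so that the inverse inequality \eqref{H4} is applicable.

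First I would write
$$
\nol P_p v_p - v_p \nor_{H^1(\Omega)} \le \nol P_p v_p - P_{1,p} v_p \nor_{H^1(\Omega)} + \nol P_{1,p} v_p - v_p \nor_{H^1(\Omega)},
$$
and focus on the first term. Since $P_p v_p - P_{1,p} v_p \in H_p$, applying \eqref{H4} to its gradient and combining with the trivial bound on the $L^2$-part gives, for $h$ small enough,
$$
\nol P_p v_p - P_{1,p} v_p \nor_{H^1(\Omega)} \le \sqrt{1 + C_{1,0}^2 \, h^{-2\theta_{1,0}}} \; \nol P_p v_p - P_{1,p} v_p \nor_{L^2(\Omega)} \le \widetilde C \, h^{-\theta_{1,0}} \, \nol P_p v_p - P_{1,p} v_p \nor_{L^2(\Omega)}.
$$

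Next I would reduce the $L^2$-norm of $P_p v_p - P_{1,p} v_p$ to the quantity that appears on the right-hand side of \eqref{H5}. Because $P_{1,p} v_p \in H_p$, one has $P_p (P_{1,p} v_p) = P_{1,p} v_p$, hence
$$
P_p v_p - P_{1,p} v_p = P_p \left( v_p - P_{1,p} v_p \right),
$$
and the $L^2$-orthogonality of $P_p$ yields
$$
\nol P_p v_p - P_{1,p} v_p \nor_{L^2(\Omega)} \le \nol v_p - P_{1,p} v_p \nor_{L^2(\Omega)} \le \nol v_p - P_{1,p} v_p \nor_{H^1(\Omega)}.
$$
Chaining these inequalities and absorbing the second term of the triangle inequality into the $h^{-\theta_{1,0}}$ factor (valid for $h$ small enough, since $h^{-\theta_{1,0}} \ge (h^*)^{-\theta_{1,0}}$) produces the announced estimate with $\theta_{0,1} = \theta_{1,0}$.

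There is no real obstacle here: the only subtlety is the choice of the intermediate projector $P_{1,p} v_p$, which is crucial because it both belongs to $H_p$ (making the inverse inequality applicable to $P_p v_p - P_{1,p} v_p$) and is the very object controlled in the right-hand side of \eqref{H5}. If instead one tried to apply \eqref{H4} directly to $P_p v_p - v_p$, this would fail since $v_p$ is not in $H_p$.
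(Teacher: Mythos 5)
Your proof is correct and follows essentially the same route as the paper's: triangle inequality through $P_{1,p} v_p$, the identity $P_p P_{1,p} v_p = P_{1,p} v_p$ combined with the $L^2$-contraction property of $P_p$, and the inverse inequality~\eqref{H4} applied to the finite-element function $P_p v_p - P_{1,p} v_p$. The only cosmetic difference is that you bound the $H^1$-norm in quadrature via $\sqrt{1+C_{1,0}^2 h^{-2\theta_{1,0}}}$ where the paper splits the $L^2$- and gradient parts additively; both yield the same constant structure $C_{0,1} \sim (h^*)^{\theta_{1,0}} + C_{1,0}$.
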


\begin{proof}
Let $v_p \in H^1(\Omega)$, one has
$$
\begin{array}{ll}
\nol P_p v_p - v_p \nor_{H^1(\Omega)} &\le \nol P_{1,p} v_p - v_p \nor_{H^1(\Omega)} + \nol P_p v_p - P_{1,p} v_p \nor_{H^1(\Omega)},\\
&= \nol P_{1,p} v_p - v_p \nor_{H^1(\Omega)} + \nol P_p \left( v_p - P_{1,p} v_p \right) \nor_{H^1(\Omega)},\\
&\le \nol P_{1,p} v_p - v_p \nor_{H^1(\Omega)} + \nol P_{1,p} v_p - v_p \nor_{L^2(\Omega)} + \nol \grad \left( P_p \left( v_p - P_{1,p} v_p \right) \right) \nor_{\L^2(\Omega)},\\
&\le 2 \nol P_{1,p} v_p - v_p \nor_{H^1(\Omega)} + C_{1,0} ~ h^{-\theta_{1,0}} ~ \nol P_p \left( v_p - P_{1,p} v_p \right) \nor_{L^2(\Omega)},\\
&\le 2 \nol P_{1,p} v_p - v_p \nor_{H^1(\Omega)} + C_{1,0} ~ h^{-\theta_{1,0}} ~ \nol P_{1,p} v_p - v_p \nor_{L^2(\Omega)},\\
&\le \left( 2 + C_{1,0} ~ h^{-\theta_{1,0}} \right) \nol P_{1,p} v_p - v_p \nor_{H^1(\Omega)},\\
&\le C_{0,1} ~ h^{-\theta_{1,0}} ~ \nol P_{1,p} v_p - v_p \nor_{H^1(\Omega)},
\end{array}
$$
where we have used $P_p P_{1,p} v_p = P_{1,p} v_p$ for all $v_p \in H^1(\Omega)$,~using~\eqref{H4}, $\nol P_p \nor_{\mathcal L(L^2(\Omega))} = 1$, and we have defined $C_{0,1} \eqdef 2 (h^*)^{\theta_{1,0}} + C_{1,0}$.
\end{proof}

We provide here the two technical lemmas used in the proof of Theorem~\ref{th:General-State}.
\begin{lemma}\label{lem:der-norm-square}
Under the assumptions of Theorem~\ref{th:General-State}, one has
\begin{multline*}
\frac{1}{2} \frac{\dd}{\dd t} \nol \matl \calPb_q & 0 \\ 0 & \mathcal P_p \matr \matl \Alphaq \\ \alphap \matr - \matl \Alphaq^d \\ \alphap^d \matr \nor_{\mathcal X}^2 
= \psl \grad \left( \rho^{-1} \left( \alphap - \mathcal P_p \alphap \right) \right), \Tens \; \left( \calPb_q \Alphaq - \Alphaq^d \right) \psr_{\L^2(\Omega)} \\
- \psl \Tens \; \left( \Alphaq - \calPb_q \Alphaq \right), \grad \left( \rho^{-1} \left( \mathcal P_p \alphap - \alphap^d \right) \right) \psr_{\L^2(\Omega)} 
+ \psl u - u^d, \gamma_0 \left( \rho^{-1} \left( \mathcal P_p \alphap - \alphap^d \right) \right) \psr_{\mathcal U, \mathcal Y}.
\end{multline*}
\end{lemma}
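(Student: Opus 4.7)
The plan is to differentiate $\mathcal E_2^2$ in the weighted $\mathcal X$-inner product, substitute the continuous weak formulation \eqref{eq:FV-continuous} and the discrete one \eqref{eq:FV-discrete}, and then exploit a symmetry-driven cancellation to arrive at the stated identity. Since $\calPb_q$ and $\mathcal P_p$ are time-independent spatial projectors they commute with $\partial_t$, and expanding the derivative using the symmetry of the weighted inner product yields
$$
\tfrac{1}{2}\tfrac{\dd}{\dd t}\mathcal E_2^2
= \psl \calPb_q \partial_t \Alphaq - \partial_t \Alphaq^d, \Tens(\calPb_q \Alphaq - \Alphaq^d) \psr_{\L^2(\Omega)}
+ \psl \mathcal P_p \partial_t \alphap - \partial_t \alphap^d, \rho^{-1}(\mathcal P_p \alphap - \alphap^d) \psr_{L^2(\Omega)}.
$$

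For the first term, I would observe that $\calPb_q \Alphaq - \Alphaq^d \in \V_q$, so the orthogonality of $\calPb_q$ with respect to $\psl \cdot , \Tens \, \cdot \psr_{\L^2}$ allows me to replace $\calPb_q \partial_t \Alphaq$ by $\partial_t \Alphaq$. Substituting the continuous relation $\partial_t \Alphaq = \grad(\rho^{-1}\alphap)$ (the first line of~\eqref{eq:FV-continuous}, in which no integration by parts is used) and then subtracting the first line of the discrete formulation~\eqref{eq:FV-discrete} tested against $\phiq = \Tens(\calPb_q \Alphaq - \Alphaq^d)$, which indeed lies in $\H_q$ because $\V_q = \Tens^{-1}\H_q$, the first term becomes $\psl \grad(\rho^{-1}(\alphap - \alphap^d)), \Tens(\calPb_q \Alphaq - \Alphaq^d) \psr_{\L^2(\Omega)}$. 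Inserting $\pm \mathcal P_p\alphap$ splits it into one contribution involving $\alphap - \mathcal P_p \alphap$ (which will survive) and one involving $\mathcal P_p \alphap - \alphap^d$ (which will be eliminated at the end).

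For the second term, the analogous orthogonality of $\mathcal P_p$ lets me replace $\mathcal P_p \partial_t \alphap$ by $\partial_t \alphap$. Now I apply the second line of~\eqref{eq:FV-continuous} (the one where Green's formula has produced the boundary control $u$) and the corresponding discrete equation from~\eqref{eq:FV-discrete} with the admissible test function $v_p = \rho^{-1}(\mathcal P_p \alphap - \alphap^d) \in H_p \subset H^1(\Omega)$; subtracting gives $-\psl \Tens(\Alphaq - \Alphaq^d), \grad(v_p) \psr_{\L^2(\Omega)} + \psl u - u^d, \gamma_0(v_p) \psr_{\mathcal U, \mathcal Y}$, the boundary bracket collapsing to the $L^2(\partial\Omega)$-duality since $u^d \in H_\partial \subset H^{1/2}$. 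I then split $\Alphaq - \Alphaq^d$ as $(\Alphaq - \calPb_q \Alphaq) + (\calPb_q \Alphaq - \Alphaq^d)$.

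Summing the two contributions produces exactly the three desired terms together with two cross terms that are both equal to $\psl \grad(\rho^{-1}(\mathcal P_p \alphap - \alphap^d)), \Tens(\calPb_q \Alphaq - \Alphaq^d) \psr_{\L^2(\Omega)}$ but come with opposite signs, the symmetry of the $\L^2$-inner product combined with the symmetry of $\Tens$ ensuring their exact cancellation. The only mildly delicate step is verifying that the various test functions generated by the algebraic manipulations actually live in the admissible discrete spaces $\H_q$ and $H_p$, which is where the conformity assumptions and the structural identities $\V_q = \Tens^{-1}\H_q$, $V_p = \rho H_p$ are used.
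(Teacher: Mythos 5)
Your proposal is correct and follows essentially the same route as the paper's proof: both rest on subtracting the discrete weak formulation \eqref{eq:FV-discrete} from the continuous one \eqref{eq:FV-continuous} with the test functions $\Tens\left(\calPb_q \Alphaq - \Alphaq^d\right) \in \H_q$ and $\rho^{-1}\left(\mathcal P_p \alphap - \alphap^d\right) \in H_p$, on the orthogonality of the weighted projectors to trade $\partial_t \Alphaq$, $\partial_t \alphap$ for $\calPb_q \partial_t \Alphaq$, $\mathcal P_p \partial_t \alphap$, and on the antisymmetric cancellation of the two cross terms after inserting $\pm \mathcal P_p \alphap$ and $\pm \calPb_q \Alphaq$. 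You merely run the computation in the opposite direction (starting from the derivative of the squared norm rather than ending with it) and make explicit the final cross-term cancellation that the paper leaves to the reader.
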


\begin{proof}
From the weak formulations~\eqref{eq:FV-continuous}--\eqref{eq:FV-discrete}, and thanks to the conformity of the finite element families, one has
$$
\psl \partial_t \Alphaq - \partial_t \Alphaq^d, \Tens \; \v_q^d \psr_{\L^2(\Omega)} = \psl \grad \left( \rho^{-1} \alphap - \frac{\alphap^d}{\rho} \right), \Tens \; \v_q^d \psr_{\L^2(\Omega)}, 
\forall \v_q^d \in \H_q = \Tens^{-1} \; \V_q \subset \Tens^{-1} \; \L^2(\Omega),
$$
and
\begin{multline*}
\psl \partial_t \alphap - \partial_t \alphap^d, \frac{v_p^d}{\rho} \psr_{L^2(\Omega)} = - \psl \Tens \; \Alphaq - \Tens \; \Alphaq^d, \grad \left( \frac{v_p^d}{\rho} \right) \psr_{\L^2(\Omega)} \\
+ \psl u - u^d, \gamma_0 \left( \frac{v_p^d}{\rho} \right) \psr_{\mathcal U, \mathcal Y}, \Forall v_p^d \in H_p = \rho V_p \subset \rho H^1(\Omega).
\end{multline*}
Summing the latter two equalities gives
\begin{multline*}
\psl \partial_t \Alphaq - \partial_t \Alphaq^d, \Tens \; \v_q^d \psr_{\L^2(\Omega)} + 
\psl \partial_t \alphap - \partial_t \alphap^d, \frac{v_p^d}{\rho} \psr_{L^2(\Omega)} 
= \psl \grad \left( \rho^{-1} \alphap - \frac{\alphap^d}{\rho} \right), \Tens \; \v_q^d \psr_{\L^2(\Omega)} \\
- \psl \Tens \; \Alphaq - \Tens \; \Alphaq^d, \grad \left( \frac{v_p^d}{\rho} \right) \psr_{\L^2(\Omega)} 
+ \psl u - u^d, \gamma_0 \left( \frac{v_p^d}{\rho} \right) \psr_{\mathcal U, \mathcal Y}, \Forall \v_q^d \in \H_q, \; v_p^d \in H_p.
\end{multline*}
Now by choosing $\v_q^d \eqdef \calPb_q \Alphaq - \Alphaq^d \in \H_q$ and $v_p^d \eqdef \mathcal P_p \alphap - \alphap^d \in H_p$, we get
\begin{multline*}
\psl \partial_t \Alphaq - \partial_t \Alphaq^d, \Tens \; \left( \calPb_q \Alphaq - \Alphaq^d \right) \psr_{\L^2(\Omega)} 
+ \psl \partial_t \alphap - \partial_t \alphap^d, \rho^{-1} \left( \mathcal P_p \alphap - \alphap^d \right) \psr_{L^2(\Omega)}  \\
= \psl \grad \left( \rho^{-1} \alphap - \frac{\alphap^d}{\rho} \right), \Tens \; \left( \calPb_q \Alphaq - \Alphaq^d \right) \psr_{\L^2(\Omega)}
- \psl \Tens \; \Alphaq - \Tens \; \Alphaq^d, \grad \left( \rho^{-1} \left( \mathcal P_p \alphap - \alphap^d \right) \right) \psr_{\L^2(\Omega)}  \\
+ \psl u - u^d, \gamma_0 \left( \rho^{-1} \left( \mathcal P_p \alphap - \alphap^d \right) \right) \psr_{\mathcal U, \mathcal Y}.
\end{multline*}
Thanks to the orthogonality of $\matl \calPb_q & 0 \\ 0 & \mathcal P_p \matr$ in $\mathcal X$, we have
\begin{multline*}
\psl \partial_t \Alphaq - \partial_t \Alphaq^d, \Tens \; \left( \calPb_q \Alphaq - \Alphaq^d \right) \psr_{\L^2(\Omega)} 
+ \psl \partial_t \alphap - \partial_t \alphap^d, \rho^{-1} \left( \mathcal P_p \alphap - \alphap^d \right) \psr_{L^2(\Omega)} \\ 
= \psl \partial_t \calPb_q \Alphaq - \partial_t \Alphaq^d, \Tens \; \left( \calPb_q \Alphaq - \Alphaq^d \right) \psr_{\L^2(\Omega)} 
+ \psl \partial_t \mathcal P_p \alphap - \partial_t \alphap^d, \rho^{-1} \left( \mathcal P_p \alphap - \alphap^d \right) \psr_{L^2(\Omega)},
\end{multline*}
leading to the announced result.
\end{proof}

\begin{lemma}\label{lem:third-term}
Under the assumptions of Theorem~\ref{th:General-State}, one has for all $h \in (0,h^*)$
$$
\nol \grad \left( \rho^{-1} \left( \alphap - \mathcal P_p \alphap \right) \right) \nor_{\L^2(\Omega)} 
\le \left( C_{0,1} C_{1,p} ~ h^{\theta_{1,p}-\theta_{0,1}} 
+ \frac{\rho^+ C_{1,0} C_p}{\sqrt{\rho_-}} ~ h^{\theta_p-\theta_{1,0}} \right) \nol \rho^{-1} \alphap \nor_{H^{\kappa+1}(\Omega)}.
$$
\end{lemma}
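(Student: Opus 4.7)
The plan is to rewrite the quantity to be estimated in terms of the ``straight'' $L^2$-projector $P_p$ for which hypotheses \eqref{H1}, \eqref{H2}, \eqref{H4}, \eqref{H5} are directly available, and then to pay the price of passing from $P_p$ to the ``curly'' weighted projector $\mathcal P_p$ using the basic properties \eqref{eq:relations-between-proj} together with the uniform bounds $\rho_-, \rho^+$ on $\rho$. Setting $v_p\eqdef\rho^{-1}\alphap\in H^{\kappa+1}(\Omega)$ and $\widetilde P_p\eqdef\rho^{-1}\mathcal P_p\rho$ (which, according to the basic properties, is a genuine projector from $L^2(\Omega)$ onto $H_p$), the quantity of interest reads $\grad\bigl(v_p-\widetilde P_p v_p\bigr)$.

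First I would split
$$
\grad\bigl(v_p-\widetilde P_p v_p\bigr)=\grad\bigl(v_p-P_p v_p\bigr)+\grad\bigl(P_p v_p-\widetilde P_p v_p\bigr),
$$
so that the proof reduces to estimating these two pieces separately. For the first piece, the $\L^2$-norm of the gradient is controlled by the full $H^1$-norm of $v_p-P_p v_p$, which by \eqref{H5} is controlled by $h^{-\theta_{0,1}}$ times $\|v_p-P_{1,p}v_p\|_{H^1(\Omega)}$, and the latter is controlled by \eqref{H2} with factor $C_{1,p}h^{\theta_{1,p}}\|v_p\|_{H^{\kappa+1}(\Omega)}$. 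This yields the $h^{\theta_{1,p}-\theta_{0,1}}$ contribution.

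For the second piece, since $P_p v_p-\widetilde P_p v_p\in H_p$, the inverse inequality \eqref{H4} can be invoked to trade a factor $h^{-\theta_{1,0}}$ for passing from the $\L^2$-norm of the gradient to the $L^2$-norm of the function itself. It then remains to bound $\|P_p v_p-\widetilde P_p v_p\|_{L^2(\Omega)}$. By the triangle inequality and the orthogonality-type relation \eqref{eq:relations-between-proj} (which says $\|v_p-P_p v_p\|_{L^2}\le\|v_p-\widetilde P_p v_p\|_{L^2}$ because $P_p$ is the $L^2$-orthogonal projector and $\widetilde P_p v_p\in H_p$), this reduces to bounding $\|v_p-\widetilde P_p v_p\|_{L^2(\Omega)}$ (up to a factor of~$2$). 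To do so, I would use the fourth line of \eqref{eq:relations-between-proj} applied to $\alphap=\rho v_p$ to get $\|\rho^{-1/2}(\alphap-\mathcal P_p\alphap)\|_{L^2}\le\|\rho^{1/2}(v_p-P_p v_p)\|_{L^2}$, and then the uniform bounds $\rho_-\le\rho\le\rho^+$ to translate this into a control of $\|v_p-\widetilde P_p v_p\|_{L^2}$ by a constant of the order of $\rho^+/\sqrt{\rho_-}$ times $\|v_p-P_p v_p\|_{L^2}$. A final application of \eqref{H1} then produces the factor $C_p h^{\theta_p}\|v_p\|_{H^{\kappa+1}(\Omega)}$, giving the second contribution $h^{\theta_p-\theta_{1,0}}$.

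The only mildly delicate point is the bookkeeping around the weighted projector $\mathcal P_p$: one must be careful that $\widetilde P_p=\rho^{-1}\mathcal P_p\rho$ is the projector landing in $H_p$ (not $\mathcal P_p$ itself, which maps into $V_p=\rho H_p$), and that the ``orthogonality detour'' through $P_p$ costs only uniform constants depending on $\rho_-,\rho^+$, not on $h$. Once this is set up, summing the two contributions and collecting constants yields exactly the announced bound with the claimed prefactors $C_{0,1}C_{1,p}$ and $\rho^+C_{1,0}C_p/\sqrt{\rho_-}$.
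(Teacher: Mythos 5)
Your proposal is correct and follows essentially the same route as the paper: the same splitting of $\rho^{-1}\left(\alphap-\mathcal P_p\alphap\right)$ through $P_p\left(\rho^{-1}\alphap\right)$, with \eqref{H5} then \eqref{H2} for the first piece and the inverse inequality \eqref{H4} for the second. The only (harmless) deviation is in bounding $\nol P_p\rho^{-1}\alphap-\rho^{-1}\mathcal P_p\alphap\nor_{L^2(\Omega)}$: the paper uses the projector identity $P_p\rho^{-1}\alphap=\rho^{-1}\mathcal P_p\rho\left(P_p\rho^{-1}\alphap\right)$ together with the non-expansiveness of $\mathcal P_p$ in the $\rho^{-1}$-weighted norm, whereas you take a triangle-inequality detour through $v_p=\rho^{-1}\alphap$ and the fourth line of \eqref{eq:relations-between-proj}; this produces a prefactor of the form $1+\sqrt{\rho^+/\rho_-}$ rather than the stated $\rho^+/\sqrt{\rho_-}$, which is immaterial since both depend only on the bounds of $\rho$ and not on $h$.
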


\begin{proof}
Writing
$$
\nol \grad \left( \rho^{-1} \left( \alphap - \mathcal P_p \alphap \right) \right) \nor_{\L^2(\Omega)} 
\le \nol \grad \left( \rho^{-1} \alphap - P_p \rho^{-1} \alphap \right) \nor_{\L^2(\Omega)}  
+ \nol \grad \left( P_p \rho^{-1} \alphap - \rho^{-1} \mathcal P_p \alphap \right) \nor_{\L^2(\Omega)},
$$
the first term on the right-hand side is bounded thanks to~\eqref{H5}, with $v_p = \rho^{-1} \alphap \in H^1(\Omega)$,
$$
\nol \grad \left( \rho^{-1} \left( \alphap - \mathcal P_p \alphap \right) \right) \nor_{\L^2(\Omega)} 
\le C_{0,1} ~ h^{-\theta_{0,1}} ~ \nol P_{1,p} \rho^{-1} \alphap - \rho^{-1} \alphap \nor_{H^1(\Omega)} 
+ \nol \grad \left( P_p \rho^{-1} \alphap - \rho^{-1} \mathcal P_p \alphap \right) \nor_{\L^2(\Omega)},
$$
and by~\eqref{H2}, still with $v_p = \rho^{-1} \alphap$,
$$
\nol \grad \left( \rho^{-1} \left( \alphap - \mathcal P_p \alphap \right) \right) \nor_{\L^2(\Omega)} 
\le C_{0,1} C_{1,p} ~ h^{\theta_{1,p}-\theta_{0,1}} ~ \nol \rho^{-1} \alphap \nor_{H^{\kappa+1}(\Omega)} 
+ \nol \grad \left( P_p \rho^{-1} \alphap - \rho^{-1} \mathcal P_p \alphap \right) \nor_{\L^2(\Omega)}.
$$
Since $(P_p \rho^{-1} \alphap - \rho^{-1} \mathcal P_p \alphap) \in V_p$, hypothesis~\eqref{H4} gives
$$
\nol \grad \left( \rho^{-1} \left( \alphap - \mathcal P_p \alphap \right) \right) \nor_{\L^2(\Omega)} 
\le C_{0,1} C_{1,p} ~ h^{\theta_{1,p}-\theta_{0,1}} ~ \nol \rho^{-1} \alphap \nor_{H^{\kappa+1}(\Omega)} 
+ C_{1,0} ~ h^{-\theta_{1,0}} ~ \nol P_p \rho^{-1} \alphap - \rho^{-1} \mathcal P_p \alphap \nor_{L^2(\Omega)}.
$$

Let us focus now on $\nol P_p \rho^{-1} \alphap - \rho^{-1} \mathcal P_p \alphap \nor_{L^2(\Omega)}$ to conclude. Since $\rho^{-1} \mathcal P_p \rho$ is a projector from $L^2(\Omega)$ onto $V_p$, one has $P_p \rho^{-1} \alphap = \rho^{-1} \mathcal P_p \rho \left( P_p \rho^{-1} \alphap \right)$. Hence
$$
\begin{array}{ll}
\dsp \nol P_p \rho^{-1} \alphap - \rho^{-1} \mathcal P_p \alphap \nor_{L^2(\Omega)} 
& \dsp = \nol \rho^{-1} \mathcal P_p \rho \left( P_p \rho^{-1} \alphap \right) - \rho^{-1} \mathcal P_p \alphap \nor_{L^2(\Omega)} \\
& \dsp \le \frac{1}{\sqrt{\rho_-}} \nol \frac{1}{\sqrt{\rho}} \mathcal P_p \left( \rho P_p \rho^{-1} \alphap - \alphap \right) \nor_{L^2(\Omega)} \\
& \dsp \le \frac{1}{\sqrt{\rho_-}} \nol \rho P_p \rho^{-1} \alphap - \rho \rho^{-1} \alphap \nor_{L^2(\Omega)} \\
& \dsp \le \frac{\rho^+}{\sqrt{\rho_-}} \nol P_p \rho^{-1} \alphap - \rho^{-1} \alphap \nor_{L^2(\Omega)},
\end{array}
$$
where we have used the lower bound $\rho_-$ for $\rho$ from the first to the second line. From the second to the third line, we have used the norm of the projector $\mathcal P_p$, which is 1 thanks to its orthogonality in $L^2(\Omega)$ endowed with the inner product $\psl v_1, \rho^{-1} v_2 \psr_{L^2}$, for all $v_1, v_2 \in L^2(\Omega)$. Finally, we have used the upper bound $\rho^+$ for $\rho$ from the third to the fourth line.

By~\eqref{H1}, still with $v_p = \rho^{-1} \alphap$, we get
$$
\nol P_p \rho^{-1} \alphap - \rho^{-1} \mathcal P_p \alphap \nor_{L^2(\Omega)} \le \frac{\rho^+ C_p}{\sqrt{\rho_-}} ~ h^{\theta_p} ~ \nol \rho^{-1} \alphap \nor_{H^{\kappa+1}(\Omega)},
$$
leading to the announced result.
\end{proof}

\end{document}